\documentclass[12pt]{amsart}

\usepackage{comment}
\usepackage{amsmath}
\usepackage{amsfonts,mathtools,upgreek}
\usepackage{amssymb,color}
\usepackage[hidelinks]{hyperref}
\usepackage{amsthm,doi}
\usepackage{thmtools,esint}
\usepackage{physics}
\usepackage{tikz}
\usepackage{enumitem}
\allowdisplaybreaks

\usepackage[a4paper, hmargin={2cm,2cm},vmargin={3cm,2.5cm}]{geometry}

\newcommand{\N}{\mathbb{N}}
\newcommand{\Z}{\mathbb{Z}}
\newcommand{\C}{\mathbb{C}}
\newcommand{\R}{\mathbb{R}}
\renewcommand{\S}{\mathcal{S}}
\newcommand{\V}{\mathfrak{V}}
\newcommand{\hh}{\mathbb{H}}
\newcommand{\HH}{\mathcal{H}}
\newcommand{\F}{\mathcal{F}}

\newcommand{\A}{A_\Omega}
\newcommand{\dual}{{g^\mathrm{d}_\Lambda}}
\newcommand{\ddual}{\varphi^\mathrm{d}}
\newcommand{\vr}{\varkappa(A)}

\DeclareMathOperator{\tra}{trace}

\DeclareMathOperator{\pp}{Per}
\DeclareMathOperator{\ii}{\nabla}
\DeclareMathOperator{\ee}{E}
\DeclareMathOperator{\vv}{Var}

\DeclareMathOperator{\gl}{Gl}
\DeclareMathOperator{\id}{Id}

\newtheorem{lemma}{Lemma}[section]
\newtheorem{theorem}[lemma]{Theorem}
\newtheorem{corollary}[lemma]{Corollary}
\newtheorem{proposition}[lemma]{Proposition}

\theoremstyle{definition}
\newtheorem{definition}[lemma]{Definition}

\newtheorem{remark}[lemma]{Remark}

\numberwithin{equation}{section}

\newcommand{\isof}{\iota}
\newcommand{\mf}{\vartheta_\varphi}
\newcommand{\Mf}{\Theta_\varphi}
\newcommand{\mg}{\vartheta_g}
\newcommand{\Mg}{\Theta_g}

\title[Spectral deviation of concentration operators on RKHS]{Spectral deviation of concentration operators on reproducing kernel Hilbert spaces}

\subjclass[2020]{47B32, 47B35, 42C40, 47A75, 46E22}
\keywords{concentration operator, reproducing kernel Hilbert space, Hankel operator, Gabor multiplier, eigenvalue}
\author[F. Marceca]{Felipe Marceca}
\address[F. M.]{Department of Mathematics, University College London \\
	 Gower Street \\
	London WC1E 6BT, UK}
\email{f.marceca@ucl.ac.uk}

\author[J. L. Romero]{Jos\'e Luis Romero}
\address[J. L. R.]{Faculty of Mathematics \\
	University of Vienna \\
	Oskar-Morgenstern-Platz 1 \\
	1090 Vienna, Austria \\and
	Acoustics Research Institute\\ Austrian Academy of Sciences\\Dominika- nerbastei 16, 1010 Vienna,  Austria}
\email{jose.luis.romero@univie.ac.at}

\author[M. Speckbacher]{Michael Speckbacher}

\address[M. S.]{ 
	Acoustics Research Institute\\ Austrian Academy of Sciences\\ Dominikanerbastei 16, 1010 Vienna,  Austria}
\email{michael.speckbacher@oeaw.ac.at}

\author[L. Valentini]{Lisa Valentini}

\address[L. V.]{ 
Department of Pure Mathematics and Mathematical Statistics
\\ University of Cambridge \\
Wilberforce Road, Cambridge CB3 0WA, United Kingdom}
\email{lv390@cam.ac.uk}

\begin{document}
\begin{abstract}
We study the eigenvalue profile of concentration operators (multiplication by an indicator function followed by projection) acting on reproducing kernel Hilbert spaces. The spectral profile of such operators provides a useful notion of local degrees of freedom. We formalize this idea by estimating the number of eigenvalues that lie away from 0 and 1, commonly referred to as the plunge region.

Our main motivation is to treat discrete and continuous settings simultaneously and uniformly, and to be able to argue that approximations arising from discretization schemes reflect, in a non-asymptotic sense, the spectral profile of their continuous counterparts. As a case in point, we show that Gabor multipliers computed on sufficiently fine grids obey spectral deviation estimates similar to those available for the short-time Fourier transform (STFT) with bounds that are  uniform in the discretization step. Concretely, this means that the theoretical localization properties of the STFT are observable in practice.
\end{abstract}
\maketitle

\section{Introduction}

\subsection{Concentration operators}
Let $(X,d,\mu)$ be a locally compact, $\sigma$-compact metric measure space $X$ and $\hh \subset L^2(X)$ a distinguished linear space of functions. We consider the question of quantifying the number of degrees of freedom that $\hh$ has per unit space. A naive answer can be given in terms of the dimensions of the restricted spaces $\{f \cdot 1_\Omega\}$, where
$\Omega \subset X$ is a domain and $1_\Omega$ is its indicator function, but this answer is too simplistic as these dimensions are infinite in many interesting cases. In this article, we look into a more refined form of quantifying local degrees of freedom, formulated in terms of the \emph{concentration operator} 
\begin{align}\label{eq_intro_co}
T_\Omega f := P (f \cdot 1_\Omega)\,, \qquad f \in \hh\,,
\end{align}
which is defined by multiplication by an indicator function followed by orthogonal projection onto $\hh$. It is easy to see that $T_\Omega$ is positive semi-definite and contractive. While $T_\Omega$ may fail to have finite rank, for many functional spaces $\hh$,
$T_\Omega$ is known to be close to an orthogonal projection, in the sense that its singular
values transition rapidly from almost 1 to almost 0. We shall investigate this phenomenon in general, and start by discussing some important examples.

\subsection{Time-frequency localization}\label{sec_intro_tf}
Our main motivation comes from \emph{time-frequency analysis}, where a function $f \in L^2(\mathbb{R}^d)$ is described by means of its time and frequency correlations with a smooth and fast-decaying \emph{window function} $g:\mathbb{R}^d \to \mathbb{C}$:
\begin{align}\label{eq_intro_invf}
V_g f(x,\xi) = \int_{\mathbb{R}^d} f(t)\overline{g(t-x)} e^{-2\pi i \xi t}\,dt\,, \qquad (x,\xi) \in \mathbb{R}^d\times\mathbb{R}^d\,.
\end{align}
The function $V_g f$ is called the \emph{short-time Fourier transform} of $f$ and, under the normalization $\|g\|_2=1$, provides the following representation:
\begin{align}\label{eq_tf_recon}
f (t) =
\int_{\mathbb{R}^d\times\mathbb{R}^d} V_g f(x,\xi)\, g(t-x)e^{2\pi i \xi t}\, dxd\xi\,, \qquad t \in \mathbb{R}^d\,,
\end{align}
where the integral converges in quadratic mean.
The range-space of the short-time Fourier transform $\hh=V_g L^2(\mathbb{R}^d)$ is a reproducing kernel subspace of $L^2(\mathbb{R}^{2d})$. For a domain $\Omega \subset \mathbb{R}^{2d}$, the concentration operator \eqref{eq_intro_co} implements the following \emph{time-frequency filter}: 
\begin{align}\label{eq_tf_dau}
\A^gf(t)=V_g^{-1} T_\Omega V_g f (t) =
\int_{\Omega} V_g f(x,\xi)\, g(t-x)e^{2\pi i \xi t}\, dxd\xi, \qquad t \in \mathbb{R}^d.
\end{align}
While perfect time-frequency restriction to $\Omega$ is impossible due to the uncertainty principle, the \emph{time-frequency localization operator} \eqref{eq_tf_dau}
was introduced in \cite{da88} as a suitable approximate restriction operator. 
Except for the multiplicity of the eigenvalue $0$, the spectra of $T_\Omega$ and $\A^g$ coincide. The deviation of the
\emph{spectral distribution function}
\begin{align}
\tra(1_{(\delta,1]}\A^g)=
\tra(1_{(\delta,1]}T_\Omega)=
\#\{\lambda \in \sigma(T_\Omega): \lambda > \delta\}
\end{align}
from $|\Omega|$, the Lebesgue measure of $\Omega$, quantifies how close one can get to the ideal time-frequency restriction.

The concentration operator $T_\Omega$ is a \emph{Toeplitz operator} with an indicator function as symbol, and its spectral asymptotics are classically studied under increasingly large isotropic dilations $\Omega \mapsto R\Omega$ valid asymptotically for a fixed spectral threshold $\delta$. Precise two-term {S}zeg\H{o} asymptotics for \eqref{eq_tf_dau} with Schwartz window $g$ and a $\mathcal C^2$ domain $\Omega$ were derived in \cite{MR3433287} and show that
\begin{align}\label{eq_intro_o}
\tra(1_{(\delta,1]}T_{R\Omega}) = 
\abs{ \Omega}\cdot R^{2d} + A(g,\partial\Omega,\delta) \cdot R^{2d-1} + o_{\delta,\Omega,g}(R^{2d-1}), \mbox{ as }R \longrightarrow \infty\,,
\end{align}
for a certain constant $A(g,\partial\Omega,\delta)$ defined explicitly in terms of the Wigner distribution of $g$ and the boundary of $\Omega$. Importantly, the little-$o$ term in \eqref{eq_intro_o} depends implicitly, and in an unspecified manner, on the spectral threshold $\delta$. 

However, some applications -- notably in mathematical physics \cite{AR23,AMR25,MR2} -- require \emph{two-parameter} estimates, that is, estimates with an explicit control on both $\delta$ and $\Omega$, that allows the two to vary together. For windows $g$ in the so-called \emph{Gelfand-Shilov class} $\mathcal{S}^{\beta,\beta}$ of functions with fast time and frequency decay \cite{MR1732755}, our recent work \cite{MR} provides the non-asymptotic estimate
\begin{align}\label{eq_intro_mr}
\big|\tra(1_{(\delta,1]}T_{\Omega}) - \abs{\Omega}\big| \leq C_{g,\partial \Omega} \cdot \HH(\partial \Omega) \cdot \log^*(\tau)^{2d\beta} \log^*\log^*(\tau), \qquad \tau=\max\{\tfrac{1}{\delta},\tfrac{1}{1-\delta}\},
\end{align}
where $\log^*(x)=\max\{1,\log(x)\}$,
$\HH(\partial \Omega)$ is the perimeter of $\Omega$
and $C_{g,\partial \Omega}$ is a constant that depends explicitly on the curvature of $\Omega$ (in a measure theoretic sense) and the time-frequency decay of $g$. Applying \eqref{eq_intro_mr} to the scaled domain $R\Omega$ one obtains estimates compatible with 
\eqref{eq_intro_o}, but valid in the two parameter regime --- see also \cite{MR} for a more technical version of \eqref{eq_intro_mr}, with an improved dependence on $\beta$ in the dilation regime.

\subsection{Spectral stability under discretization}
In practice, the time-frequency filter \eqref{eq_tf_dau} is approximately implemented by means of discrete analogs of \eqref{eq_intro_invf} known as \emph{Gabor frame expansions}:
\begin{align}\label{eq_intro_gab}
f(t) = \sum_{n,m \in \mathbb{Z}^d} V_g f\big(\tfrac{n}{N},\tfrac{m}{N}\big) \, g^{\mathrm{d}}_N\big(t- {\tfrac{n}{N}}\big) e^{2\pi i  {\tfrac{m  }{N}t}}, \qquad t \in \mathbb{R}^d.
\end{align}
Here, $N$ is a \emph{resolution parameter} and $g^{\mathrm{d}}_N \in L^2(\mathbb{R}^d)$ is the so-called \emph{canonical dual window} of $g$, obtained by solving a certain least-squares problem. The \emph{Gabor multiplier}
\begin{align}\label{eq_intro_gm}
M_{g,N,\Omega} f(t) = \sum_{n,m \in \mathbb{Z}^d} V_g f\big(\tfrac{n}{N},\tfrac{m}{N}\big) 1_\Omega\big(\tfrac{n}{N},\tfrac{m}{N}\big) \, g^{\mathrm{d}}_N\big(t- {\tfrac{n}{N}}\big) e^{2\pi i  {\tfrac{m  }{N}t}}\,, \qquad t \in \mathbb{R}^d\,,
\end{align}
defined by restricting the Gabor expansion \eqref{eq_intro_gab} to only those lattice points lying in the domain $\Omega$, is the preferred numerical implementation of \eqref{eq_tf_dau}.

The motivating question for our work is whether Gabor multipliers satisfy spectral deviation estimates analogous to \eqref{eq_intro_mr}, that are also stable under refinements of the sampling lattice $\Lambda$, and faithful to the continuous limit. In other words, \emph{can we see the spectral deviation estimate \eqref{eq_intro_mr} in practice?} This question is subtle and continues to receive attention from practitioners \cite{Ha}. While the Gabor multiplier \eqref{eq_intro_gm} converges in operator norm to the time-frequency localization operator \eqref{eq_tf_dau}, this fact is insufficient to imply (two-parameter) spectral deviation estimates for Gabor multipliers.

As an application of our main results, we shall see, for example, that for windows $g$ in the Gelfand-Shilov class $\mathcal{S}^{\beta,\beta}$ there exists $N_0$ such that for $N\ge N_0$:
\begin{align}\label{eq_intro_gm_spec}
\big|\tra(1_{(\delta,1]}M_{g,N,\Omega}) - \abs{\Omega}\big| \leq C_{g,\partial \Omega} \cdot \HH(\partial \Omega) \cdot \log^*(\tau)^{2d\beta} \log^*\log^*(\tau)\,, \qquad \tau=\max\{\tfrac{1}{\delta},\tfrac{1}{1-\delta}\}\,,
\end{align}
where, as before, $C_{g,\partial \Omega}$ is a constant that depends explicitly on the curvature of $\Omega$ and the time-frequency decay of $g$. Importantly, the right-hand side of \eqref{eq_intro_gm_spec} does not depend on the discretization parameter $N$. We also obtain similar estimates for windows $g$ in modulation spaces \cite{benyimodulation} that provide a discrete analogue to \cite[Theorem~1.4]{MR}, without discretization artifacts.

\subsection{Further applications}
The goal of treating discrete and continuous settings simultaneously and uniformly naturally leads us to work with \eqref{eq_intro_co} in the more general context of reproducing kernel Hilbert spaces. Beyond time-frequency analysis, our work implies the preservation of certain spectral properties under discretization of general frames (via frame multipliers) and concerns also vector-valued contexts (relevant in quantum harmonic analysis). While, at first glance, the quality of our estimates directly depends on the off-diagonal decay of the associated reproducing kernel, it is possible to combine our main result with certain decomposition methods and also treat slowly decaying kernels. As a proof of concept, we 
shall reinterpret the wave-packet expansion of \cite{israel15} as a means to
decompose the \emph{sinc kernel} into kernels that fall into the scope of this work; see \cite{MR4711850,kul2} for a related technique.

We now discuss our results in more detail.

\section{Results}
\subsection{Setup}
Let $X$ be a locally compact metric space, and let $\mu$ be a positive, $\sigma$-finite, Borel measure on $X$ that is finite on compact sets. 
To cover certain interesting examples, we shall work with vector-valued functions. Let $\HH$ be a separable Hilbert space and define $L^2(X,\HH)$ as the space of (weakly measurable) vector-valued functions for which 
$$
\|f\|^2_{L^2(X,\HH)}:=\int_X\|f(x)\|^2\,d\mu(x)
$$
is finite. Let $\hh \subseteq L^2(X, \HH)$ be a reproducing kernel Hilbert space (RKHS) of vector-valued functions with reproducing kernel $K \colon X\times X \to \S_2(\HH)$, where $\S_p=\S_p(\HH)$ are the Schatten $p$ operators on $\HH$, and write $K_y(x)=K(x,y)$ --- see Section~\ref{sec_vvrkhs} for details.

We shall study the spectrum of the concentration operator \eqref{eq_intro_co}. For technical reasons, it will be convenient to extend $T_\Omega$ by $0$ on $L^2(X,\HH) \ominus \hh$, that is,
\begin{equation}\label{deftoep}
    T_\Omega f = P(1_\Omega \cdot Pf) \,, \qquad f\in L^2(X,\HH)\,,
\end{equation}
where $\Omega\subseteq X$ is compact and $P \colon L^2(X,\HH) \to \hh$ is the orthogonal projection. 

\subsection{Assumptions}
We assume the following conditions:
\begin{enumerate}[label={\bf[C\arabic*]},ref={\bf[C\arabic*]}, leftmargin=4em]
    \item \label{norm2} $\lVert K_x \rVert_{L^2(X,\S_2)} = 1$ for every $x\in X$. 
    \item \label{stripe} There exists a constant $\gamma >0$ --- called \emph{inflation rate} --- such that
    \begin{align}\label{eq_ii}
    \ii(\Omega) := \sup_{n \in \mathbb{N}_0, E=\Omega,\Omega^c}
    2^{-\gamma (n-1)}
    \mu(\{x\in E^c : \ \mathrm{d}(x,E)\le 2^n\}) < \infty. 
    \end{align}
\end{enumerate}
In addition, sometimes we also assume: 
\begin{enumerate}[label={\bf[C\arabic*]},ref={\bf[C\arabic*]}, leftmargin=4em]
\setcounter{enumi}{2}
    \item \label{doubling} The measure $\mu$ is \emph{doubling}, that is, there exists a constant $C_{X}\ge 1$ such that for every $x\in X$ and $r>0$,
    \[0<\mu(B_{2r}(x))\le C_{X}\, \mu(B_{r}(x))\,.\]
\end{enumerate}

\noindent $\bullet$ Condition~\ref{norm2} is a normalization assumption. If not directly satisfied, one can renormalize the background measure as follows. Let $m(x):= \|K_x\|_{L^2(X,\S_2)}$,
$\tilde{X}=\{x \in X\,:\, m(x) \not=0\}$, and $d\widetilde{\mu}(x)=m(x)^2d\mu(x)$.
Then $\widetilde{\mathbb{H}}:=\big\{\tfrac{1}{m}\, F|_{\tilde{X}}\,:\, F\in\mathbb{H}\big\}$ is a RKHS as a subspace of $L^2(\tilde{X},\widetilde{\mu},\HH)$ with kernel $\widetilde{K}(x,y)=(m(x) m(y))^{-1}K(x,y)$, which satisfies \ref{norm2}, while $\widetilde{\mu}(X \smallsetminus \tilde{X})=0$ and $\mathbb{H} \ni F \mapsto \tfrac{1}{m} F|_{\tilde{X}} \in \widetilde{\mathbb{H}}$ is an isometric isomorphism.

\noindent $\bullet$ Condition~\ref{stripe} controls the inflation rate $\gamma$ of the set $\Omega$ and its complement; the 
constant $\ii(\Omega)$ defined in \eqref{eq_ii} is called the \emph{inflation constant} of $\Omega$. When $\Omega$ is a subset of $\mathbb{R}^d$ with smooth boundary, then $\gamma=d$ and $\ii(\Omega)$ can be controlled in terms of the measure and curvature of $\partial\Omega$ (see Section~\ref{sec_bo}).
In other contexts, such as discrete or non-convex spaces, $\ii(\Omega)$ cannot be related to the topological boundary of $\Omega$. As we shall see, the inflation constant is compatible with natural approximation procedures, and that is its main merit for the problem under study.

\noindent $\bullet$ The doubling condition \ref{doubling} is easy to check in many examples and allows us prove stronger estimates. 

\subsection{Poincar\'e perimeter}
We introduce the following notion of perimeter.
\begin{definition}
    Let $\rho:X\to [0,\infty)$ be a Borel measurable function and  $u\in L^1_{\text{loc}}(X)$. We say that $(u,\rho)$ are a \emph{Poincar\'e pair} if there exists $\lambda\ge 1$ such that for every ball $B_r(x)$ with $\mu(B_r(x)) >0$,
    \[\fint_{B_r(x)}|u-u_{B_r(x)}| \, d\mu \le r\fint_{B_{\lambda r}(x)} \rho \, d\mu\, ,\]
    where $f_E=\fint_E f d\mu=\tfrac{1}{\mu(E)}\int_E f d\mu$.
  
    We define the \emph{Poincar\'e perimeter} of a measurable set $\Omega\subseteq X$ as
\begin{align}\label{eq_per}    \pp(\Omega)=\inf\Big\{\liminf_{j\to\infty}\int_X \rho_{j} \,d\mu : \ (u_j,\rho_{j}) \text{ Poincar\'e pair, } u_j \text{ locally Lipschitz, } u_j\xrightarrow{L^1_{\text{loc}}(X)} 1_\Omega\Big\}.
\end{align}
\end{definition}
The definition of Poincar\'e perimeter is non-standard, but inspired by similar notions in metric spaces related to the total variation of $1_\Omega$ (see, for example, \cite{Mi}). While other more common definitions assign perimeter $0$ to every set if $X$ is discrete, as we shall see, our variant allows us to treat discrete metric spaces and is compatible with discretization and approximation procedures.   For a large family of metric spaces, our notion of perimeter is comparable to more standard ones, see Section~\ref{sec:perimeter}.

\subsection{Off-diagonal decay of the reproducing kernel}
To quantify the off-diagonal decay of the reproducing kernel $K$, for $s\ge 0$, we define
the \emph{dyadic decay measure}
\begin{align}\label{eqn}
    N(s) &:=    \sum_{n\ge 0}\sup_{x\in X} \int_{A_{n,x}} (1+ \mathrm{d}(x,x'))^{s} \|K(x,x')\|_{\S_2}^2 \, d\mu(x')\, ,
\end{align}
where $A_{n,x}=B_{2^n}(x)\smallsetminus B_{2^{n-1}}(x)$ for $n\ge 1$ and $A_{0,x}=B_{1}(x)$. 

If $X$ has a group structure that leaves the measure $\mu$ and the distance $\mathrm{d}$ invariant, and acts isometrically on $\hh$, then $N(s)$ simplifies to
\begin{align*}
N(s)=  \int_{X} (1+\mathrm{d}(e,x))^{s} \|K(e,x)\|_{\S_2}^2 \, d\mu(x)\,,
\end{align*}
where $e$ is the neutral element; see Section~\ref{sec_simod}.

\subsection{Main result}
With the notation $\log^\ast(x):=\max\{1,\log x\}$, our main result reads as follows.
\begin{theorem} \label{th:general_bound}
    Let $\delta\in(0,1)$ and $\tau \coloneqq \max\{\frac{1}{\delta},\frac{1}{1-\delta}\}$. Assume Conditions~\ref{norm2} and \ref{stripe} hold. Then
    \begin{align}  
        \big| \#\{\lambda  &\in \sigma(T_{\Omega}):\  \lambda > \delta\} -  \mu(\Omega) \big|
          \lesssim \ii(\Omega)  \cdot\inf \Big\{\big(\tau  N(s)\big)^{\frac{\gamma}{s}}\Big(\log^\ast\Big(\big(\tau  N(s)\big)^{\tfrac{1}{s}}\Big)\Big)^{1-\tfrac{\gamma}{s} } :   s\ge\gamma\Big\} \, . \label{eq:last_common}
      \end{align}
If in addition Condition~\ref{doubling} holds, 
then one also has
\begin{align}
        \big|  \#\{\lambda &\in \sigma(T_{\Omega}):\  \lambda > \delta\} - {   } \mu(\Omega) \big| \notag
       \\
       &\lesssim \max\{ \ii(\Omega),\pp(\Omega)\} \cdot  \inf \Big\{\big(\tau  { }N(s)\big)^{\frac{\gamma}{s+\gamma-1}}\Big(\log^\ast\Big(\big(\tau  {}N(s)\big)^{\tfrac{1}{s+\gamma-1}}\Big)\Big)^{\tfrac{s-1}{s+\gamma-1} } :  s\ge 1\Big\} \, . \label{eq:last_common2}
    \end{align} 
Here, the constant implied in \eqref{eq:last_common} is absolute, while the one in \eqref{eq:last_common2} is $O(C_X^4)$.
\end{theorem}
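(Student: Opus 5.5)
The plan is to follow the standard trace-based route. It compares the eigenvalue count with $\tra(T_\Omega)$ and controls the plunge region through $\tra(T_\Omega - T_\Omega^2)$. By \ref{norm2}, $\tra(T_\Omega)=\int_\Omega \|K_x\|^2_{L^2(X,\S_2)}\,d\mu(x)=\mu(\Omega)$. Since $0\le T_\Omega\le 1$, the elementary inequality $|1_{(\delta,1]}(\lambda)-\lambda|\le \tau\,\lambda(1-\lambda)$, valid for $\lambda\in[0,1]$, gives
\[
\big|\#\{\lambda>\delta\}-\mu(\Omega)\big|\le \sum_\lambda \big|1_{(\delta,1]}(\lambda)-\lambda\big| .
\]
However, the factor $\tau$ appears linearly in this bound, whereas the theorem has $\tau^{\gamma/s}$. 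So I refine the argument by splitting the kernel at a scale $R$. Write $\tra(T_\Omega-T_\Omega^2)=\int_\Omega\int_{\Omega^c}\|K(x,x')\|_{\S_2}^2\,d\mu(x')\,d\mu(x)$, and decompose $T_\Omega=P1_\Omega P$ as a "local part" plus a "tail part". To do this, let $\Omega_R$ be the set of points of $\Omega$ at distance $\le R$ from $\Omega^c$, and similarly on the complement side.

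The key step is a Weyl-type perturbation argument. Points of $\Omega$ far from $\Omega^c$ (distance $>R$) contribute to $T_\Omega-T_\Omega^2$ only through kernel tails. Summing over dyadic annuli $A_{n,x}$, the total contribution is at most $\mu(\Omega)$-independent: pairing each dyadic shell with the inflation bound $\mu(\{d(\cdot,E)\le 2^n\})\le \ii(\Omega)2^{\gamma(n-1)}$ and using $(1+d)^{s}$ weights gives a tail of size about $\ii(\Omega) N(s) R^{\gamma-s}$ for $s\ge\gamma$, with the boundary case $s=\gamma$ producing the logarithm. Meanwhile, the boundary strip contributes at most its measure, $\lesssim \ii(\Omega) R^\gamma$.

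Concretely, I would write $T_\Omega = A + B$. Here $A$ is supported in the strip, with rank or trace $\lesssim \ii(\Omega)R^\gamma$, and $B$ is a perturbation of a projection ($P1_{\Omega\setminus\Omega_R}P$ compared with its square) with Hilbert–Schmidt or trace defect $\lesssim \ii(\Omega)N(s)R^{\gamma-s}$. The eigenvalue count in $(\delta,1]$ then differs from $\mu(\Omega)$ by at most $\ii(\Omega)R^\gamma + \tau\,\ii(\Omega)N(s)R^{\gamma-s}$, up to constants. The $R^\gamma$ term comes from the rank of the strip part via min-max. The second term comes from applying the inequality $|1_{(\delta,1]}(\lambda)-\lambda|\le\tau\lambda(1-\lambda)$ to the tail. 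Optimizing with $R=(\tau N(s))^{1/s}$ yields \eqref{eq:last_common}, with the logarithmic factor coming from the dyadic sum near scales $\le R$ (the $n$ with $2^n\le R$ contribute $\log R$ weighted appropriately, giving $(\log^* R)^{1-\gamma/s}$ after interpolating the $s$-weights).

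For \eqref{eq:last_common2}, doubling allows a better strip estimate. Instead of bounding the near-boundary contribution by $\mu$ of the strip, I would bound $\int_\Omega\int_{\Omega^c}\|K(x,x')\|^2$ over pairs at distance $\le R$ using the Poincaré perimeter. Take a Lipschitz approximant $u_j\to 1_\Omega$ with its Poincaré pair $\rho_j$. Then cover $X$ by balls of radius $r$ (using doubling, with multiplicity $\lesssim C_X^{O(1)}$) and use $\iint_{d(x,x')\le r}|u(x)-u(x')|\lesssim r\int\rho$ to obtain a bound $\lesssim \pp(\Omega)\, r$ on each dyadic shell, weighted by the kernel mass there. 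Summing the shells against $(1+d)^{s}$ weights with $s\ge1$ gives a near-part bound of order $\pp(\Omega)N(s)$-type terms with exponent $1-s$. Balancing this against the strip term $\ii(\Omega)R^\gamma$ produces the exponent $\gamma/(s+\gamma-1)$.

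I expect the main obstacle to be the perturbation step. The count $\#\{\lambda>\delta\}$ is not a trace functional, so I must carefully combine a finite-rank correction (from the strip) with the $\tau$-weighted trace bound on the remainder. I would try using the operator $P1_{\Omega\setminus\Omega_R}P$ together with the min-max principle, and pay attention to the logarithmic bookkeeping in the dyadic sums.
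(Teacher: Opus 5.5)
Your proposal has a genuine gap, at exactly the step you flagged. The splitting $T_\Omega=A+B$ with $A=P1_{\Omega_R}P$ and $B=P1_{\Omega\smallsetminus\Omega_R}P$ does not give a count of the form $\ii(\Omega)R^\gamma+\tau\,\ii(\Omega)N(s)R^{\gamma-s}$, for two reasons.

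First, $A$ does not have finite rank; you only know $\tra(A)=\mu(\Omega_R)\lesssim\ii(\Omega)R^\gamma$. In any Weyl or min-max argument the quantity that enters is $\#\{\lambda\in\sigma(A):\lambda>c\delta\}$. The trace bounds this only by $\mu(\Omega_R)/(c\delta)$, so the factor $\tau$ reappears on the strip term. Optimizing in $R$ then gives essentially nothing better than the crude bound $\tau\,\tra(T_\Omega-T_\Omega^2)$. Upgrading ``trace'' to ``the number of eigenvalues above $c\delta$ is $\lesssim\mu(\Omega_R)$'' is itself a spectral deviation estimate for the concentration operator of the strip, so the argument becomes circular.

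Second, $B$ is not close to a projection. Its defect is $\tra(B-B^2)=\int_{\Omega\smallsetminus\Omega_R}\int_{(\Omega\smallsetminus\Omega_R)^c}\|K(x,y)\|_{\S_2}^2\,d\mu(y)\,d\mu(x)$. The set $(\Omega\smallsetminus\Omega_R)^c$ contains the strip, which is adjacent to $\Omega\smallsetminus\Omega_R$, so this is a full boundary term rather than a kernel tail. Only the cross term $\int_{\Omega\smallsetminus\Omega_R}\int_{\Omega^c}\|K\|_{\S_2}^2$ is $\lesssim\ii(\Omega)N(s)R^{\gamma-s}$. That term controls only the piece $1_{\Omega^c}P1_{\Omega\smallsetminus\Omega_R}$ of $1_{\Omega^c}P1_\Omega$, and the remaining strip piece runs into the first problem. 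Your account of the logarithm is also not grounded: if the splitting worked, $\ii(\Omega)(R^\gamma+\tau N(s)R^{\gamma-s})$ would optimize to $\ii(\Omega)(\tau N(s))^{\gamma/s}$ with no logarithm at all.

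The missing idea is to split the spectral function rather than the operator. For $p\in(0,2]$ one has $|1_{(\delta,1]}(\lambda)-\lambda|\le 2\tau^{p/2}(\lambda-\lambda^2)^{p/2}$ on $[0,1]$. Hence the deviation is at most $2\tau^{p/2}\|H\|_{\S_p}^p$, where $H=(1-P)1_\Omega P$ and $H^*H=T_\Omega-T_\Omega^2$. With $p$ small, the Schatten quasi-norm does the multiscale bookkeeping your strip/tail decomposition was meant to do, and no rank bound is ever needed. The steps in the paper are:
\begin{enumerate}
\item Concavity of $t\mapsto t^{p/2}$ together with \ref{norm2} gives $\|H\|_{\S_p}^p\le\int_X\|HK_y\|_{L^2(X,\S_2)}^p\,d\mu(y)$.
\item For $y\in E\in\{\Omega,\Omega^c\}$ one has $\|HK_y\|^2\le\int_{E^c}\|K(x,y)\|_{\S_2}^2\,d\mu(x)$, because $H^*H=T_\Omega T_{\Omega^c}\preceq T_{E^c}$.
\item H\"older's inequality with the weight $\psi(y)=(1+\mathrm{d}(y,E^c))^{\gamma+\alpha}$ produces a factor $\big(\int_E\psi^{-1}\big)^{1-p/2}\lesssim(\ii(\Omega)/\alpha)^{1-p/2}$ by \ref{stripe}. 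This multiplies the $p/2$-th power of a weighted non-local perimeter.
\item That non-local perimeter is at most $\ii(\Omega)N\big((\gamma+\alpha)\tfrac{2-p}{p}+\gamma\big)$ by a dyadic-shell argument. Under \ref{doubling} it is at most $\pp(\Omega)N\big((\gamma+\alpha)\tfrac{2-p}{p}+1\big)$, via chaining.
\end{enumerate}
Choosing $p=2\frac{\gamma+\alpha}{s+\alpha}$ and $\alpha=1/\log^*\big((\tau N(s))^{1/s}\big)$ gives $(\tau N(s))^{\gamma/s}$ up to an absolute constant. The factor $\alpha^{-(1-p/2)}$ is exactly the logarithm in \eqref{eq:last_common}. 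The analogous choice with $s+\gamma-1$ in place of $s$ yields \eqref{eq:last_common2}.
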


For kernels with exponential off-diagonal decay, Theorem~\ref{th:general_bound} takes the following form.
\begin{corollary}[Exponential decay] \label{lemma:exp}
Assume Conditions~\ref{norm2} and \ref{stripe} hold. Let $\alpha,\beta>0$ and suppose that  
\begin{align} \label{eq:exp}
D_\hh=\sup_{x'\in X} \int_X e^{\alpha \mathrm{d}(x,x')^{1/\beta}} \|K(x,x')\|_{\S_2}^2\, d\mu(x) <\infty \,.
\end{align}
Then, for $\delta\in(0,1)$ and $\tau \coloneqq \max\{\frac{1}{\delta},\frac{1}{1-\delta}\}$,
\begin{align} 
        \label{eq:last_common3}
        \big| \#\{\lambda \in \sigma(T_{\Omega}): \lambda > \delta\} -  {  }\mu(\Omega) \big|
        &\lesssim  \ii(\Omega) \cdot \big(\log^\ast(\tau {D_\hh} )\big)^{ {\beta}\gamma} \cdot\log^\ast\big(\log^\ast(\tau {D_\hh}) \big)  \,. 
\end{align} 
$($The implied constant depends on $\alpha$ and $\beta$.$)$
\end{corollary}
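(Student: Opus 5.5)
The plan is to derive the corollary directly from the first estimate \eqref{eq:last_common} of Theorem~\ref{th:general_bound}. We bound the dyadic decay measure $N(s)$ in terms of $D_\hh$ for every $s\ge\gamma$, and then choose $s$ depending on $\tau D_\hh$ to optimize the infimum.

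\textbf{Step 1: polynomial moments from the exponential weight.} First we handle the asymmetry: $D_\hh$ integrates in the first variable with a supremum over $x'$, while $N(s)$ integrates in the second variable with a supremum over $x$. Since $K$ is a reproducing kernel, $K(x,x')=K(x',x)^*$, so $\|K(x,x')\|_{\S_2}=\|K(x',x)\|_{\S_2}$ and the roles of the variables can be swapped.

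For each $n$, on the annulus $A_{n,x}$ we have $1+\mathrm{d}(x,x')\le 1+2^n$, and for $n\ge1$ also $\mathrm{d}(x,x')\ge 2^{n-1}$. Hence
\[
\int_{A_{n,x}}(1+\mathrm{d})^s\|K\|^2\,d\mu \le (1+2^n)^s e^{-\alpha 2^{(n-1)/\beta}} D_\hh .
\]
Summing over $n$ gives
\[
N(s)\le D_\hh\Big(2^s+\sum_{n\ge1}(1+2^n)^s e^{-\alpha 2^{(n-1)/\beta}}\Big).
\]
To estimate the sum, maximize $t\mapsto (2t)^s e^{-\alpha t^{1/\beta}}$ with $t=2^{n-1}$. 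The maximum is $\big(C s\beta/\alpha\big)^{s\beta}$ up to $C^s$ factors. The series is dominated by its largest terms times a count of roughly $\log s$ terms, which can be absorbed into $C^s$. This yields
\[
N(s)\le D_\hh\,(C_{\alpha,\beta}\, s)^{\beta s}, \qquad s\ge\gamma .
\]

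\textbf{Step 2: optimize over $s$.} Substituting into \eqref{eq:last_common} gives
\[
(\tau N(s))^{1/s}\le (\tau D_\hh)^{1/s}\,(C s)^{\beta}.
\]
Set $L=\log^\ast(\tau D_\hh)$ and choose $s=\max\{\gamma,L\}$. Then $(\tau D_\hh)^{1/s}\le e$, so $(\tau N(s))^{1/s}\lesssim s^\beta\lesssim L^\beta$, where the constants depend on $\gamma$, $\alpha$ and $\beta$. The factor $(\tau N(s))^{\gamma/s}$ is therefore $\lesssim L^{\beta\gamma}$. The logarithmic factor satisfies
\[
\big(\log^\ast(C L^\beta)\big)^{1-\gamma/s}\lesssim \log^\ast L ,
\]
using $1-\gamma/s\le 1$. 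Combining the two gives \eqref{eq:last_common3}.

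\textbf{Main obstacle.} The delicate point is the uniform bound $N(s)\le D_\hh (Cs)^{\beta s}$ with constants independent of $s$. Any factor worse than $C^s$, such as $s^{2\beta s}$, would only change constants once the $1/s$-th root is taken, so the approach tolerates this. What must not happen is super-exponential growth of the number of relevant annuli, and the dyadic annulus structure prevents it. A second issue is the implicit dependence on $\gamma$ when $L<\gamma$. In that case $s=\gamma$ and all quantities are bounded by constants, so this regime is harmless, provided the implied constants are allowed to depend on $\gamma$ as well as on $\alpha$ and $\beta$.
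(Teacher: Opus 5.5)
Your proposal is correct and follows essentially the same route as the paper. You bound $N(s)$ by $D_\hh$ times a factor of size $C_{\alpha,\beta}^{s}\,s^{\beta s}$ (the paper gets $D_\hh\, 2^{2(\beta+1)s}\max\{1,(\beta/\alpha)^{2\beta s}\}s^{\beta(s+1)}$), then insert $s\approx\log^\ast(\tau D_\hh)$ into \eqref{eq:last_common}. The differences are cosmetic. The paper sacrifices one power of $(1+\mathrm{d})$ to sum the dyadic shells and then uses a single bound on $\sup_{r\ge0}(1+r)^{s+1}e^{-\alpha r^{1/\beta}}$, whereas you apply the exponential decay shell by shell. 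Your choice $s=\max\{\gamma,L\}$ replaces the paper's assumption ``without loss of generality $\log\tau\ge\gamma$''; it makes constants depend on $\gamma$, which is implicitly also the case in the paper.
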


\subsubsection{Applicability}
In Section~\ref{sec_bo} we develop estimates on $\ii(\Omega)$ and $\pp(\Omega)$ that facilitate the application of Theorem~\ref{th:general_bound} and Corollary~\ref{lemma:exp}. Notably, if the measure $\mu$ is doubling and $X$ is a Poincar\'e space, then 
\begin{align*}
\pp(\Omega)\lesssim \HH(\partial\Omega)\,,
\end{align*}
where $\HH(\partial\Omega)$ is the 
\emph{codimension-one Hausdorff measure} of $\partial\Omega$ --- see Proposition~\ref{peri}.
In addition, if $\partial\Omega$ is lower Ahlfors regular
\cite{DaSe} and $X$ is quasiconvex, we show in Proposition~\ref{reduct} that
\begin{align}
\ii(\Omega) \lesssim \HH(\partial\Omega)\,,
\end{align}
where the implied constant shall be described precisely. 

Hence, in many situations of interest, the error rate of the spectral deviation estimates \eqref{eq:last_common}, \eqref{eq:last_common2} and \eqref{eq:last_common3} corresponds to the usual perimeter of the localization domain. The importance of the more precise estimates, formulated in terms of $\pp(\Omega)$ and $\ii(\Omega)$, is that
they are compatible with various 
\emph{discretization methods} --- see Section~\ref{sec_stab}. Thus, our work enables the rigorous analysis of \emph{approximation schemes} of Toeplitz operators, 
helping to show that the discrete implementations of Toeplitz operators share the spectral profile of their continuous counterparts.

\subsection{Applications}
\subsubsection{Gabor multipliers}
Let us state how Theorem~\ref{th:general_bound} applies to Gabor multipliers (see Section~\ref{sec:application} for definitions). 
\begin{corollary}
\label{cor_gm_prel}
Let $g \in L^2(\mathbb{R})$ with $\|g\|_2=1$, $N\in \N$ a resolution parameter and $\Omega\subset\R^{2}$ a compact domain with connected boundary and $\HH(\partial\Omega)\ge 1$. Suppose that the Gabor system $\big\{ g(t-\tfrac{n}{N}) e^{ 2\pi i \tfrac{ m }{N}t}\big\}_{n,m\in\Z}$ is a frame of $L^2(\mathbb{R})$ and consider the Gabor multiplier $M_{g,N,\Omega}$ from \eqref{eq_intro_gm}. 
The following statements hold:
\begin{enumerate}[label=(\roman*)] 
\item\label{cgi2} Suppose that $g\in M_{s}^1(\R)$ for some $s\ge \tfrac{1}{2}$ (see \eqref{eqmod}). There exist  $N_0= N_0(g,s)\ge 1$, and $C=C(s)>0$ such that if $N \ge N_0$, then 
\begin{align*}
        \big| \#\{\lambda \in \sigma(M_{g,N,\Omega}):\  \lambda > \delta\} -  |\Omega| \big| \notag
       \le 
         C\cdot\HH(\partial \Omega)
         \cdot  \big(\tau \|g\|_{M_{s}^1}^4\big)^{\frac{2}{2s+1}}\cdot\Big(\log^\ast\Big((\tau \|g\|_{M_{s}^1}^4)^{\tfrac{1}{2s+1}}\Big)\Big)^{\tfrac{2s-1}{2s+1} }\,.
    \end{align*}
\item\label{cgii2} Suppose that $g$ satisfies the Gelfand-Shilov condition $|V_gg(z)| \leq B e^{-\alpha |z|^{1/\beta}}$
for constants $B,\alpha>0$ and $\beta\ge 1$. There exist $N_0= N_0(B,\alpha,\beta)\ge 1$ and $C=C(B,\alpha,\beta)>0$ such that if $N \ge N_0$, then 
\begin{align*}
        \big|  \#\{\lambda \in \sigma(M_{g,N,\Omega}):\  \lambda > \delta\} - {   } |\Omega| \big| \notag
       \le C \cdot \HH(\partial \Omega) \cdot  \big(\log^\ast(B\tau )\big)^{ 2{\beta}} \cdot\log^\ast\big(\log^\ast(B\tau) \big)\,.
    \end{align*}
\end{enumerate}  
\end{corollary}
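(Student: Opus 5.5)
The plan is to realize $M_{g,N,\Omega}$ as a concentration operator on a discrete RKHS and then apply Theorem~\ref{th:general_bound}: estimate \eqref{eq:last_common2} for part \ref{cgi2}, and Corollary~\ref{lemma:exp} for part \ref{cgii2}.

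\textbf{Reduction to a concentration operator.} Take $X=\Lambda_N=\tfrac1N\Z^2$ with the Euclidean metric and the counting measure weighted by $N^{-2}$, so that $\mu(\Omega\cap\Lambda_N)\approx|\Omega|$. Let $\hh=\{V_gf|_{\Lambda_N}: f\in L^2(\R)\}$. Since the Gabor system is a frame, $\hh$ is a closed subspace of $\ell^2(\Lambda_N)$, and it is a RKHS whose kernel is given by the Gram matrix of the canonical tight frame,
\[
K(\lambda,\lambda')=N^{-2}\,\big\langle S^{-1/2}\pi(\lambda')g,\,S^{-1/2}\pi(\lambda)g\big\rangle ,
\]
up to the weight normalization. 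The multiplier $M_{g,N,\Omega}=D\,1_\Omega C$ (synthesis with the dual window after analysis) is similar to $S^{-1/2}$-conjugated versions of $C^*1_\Omega C$, and the latter has the same nonzero spectrum as $P1_\Omega P$ on $\hh$. So the counting function of $M_{g,N,\Omega}$ equals that of $T_{\Omega\cap\Lambda_N}$.

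Two kernel properties are needed.

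\emph{Diagonal normalization.} The diagonal of $K$ is not exactly constant. It is asymptotically constant, because the frame operator $S_N$ satisfies $N^{-2}S_N\to I$ in norm as $N\to\infty$. I would use the renormalization remark after \ref{norm2}: the renormalized measure $\widetilde\mu$ differs from $N^{-2}\#$ by a factor $1+O(\varepsilon_N)$. The resulting error $\varepsilon_N|\Omega|$ must be absorbed into the right-hand side, using $|\Omega|\lesssim\HH(\partial\Omega)^2$ together with smallness of $\varepsilon_N$. This is only possible if $\varepsilon_N$ is taken depending on $\Omega$, which is undesirable.

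\emph{This is the main obstacle.} I expect a better route: the dual window $S_N^{-1}g$ and $S_N^{-1/2}g$ inherit the $M^1_s$ (resp.\ Gelfand--Shilov) decay uniformly in $N\ge N_0$, via Wiener/Janssen-type spectral invariance for the Gabor frame operator. Moreover, $\mu(\Omega)$ can be replaced by the trace $\tra T_\Omega=\sum_{\lambda\in\Omega}K(\lambda,\lambda)$, since Theorem~\ref{th:general_bound} really compares with this trace. One then shows $|\tra T_\Omega-|\Omega||\lesssim\HH(\partial\Omega)$ by a lattice-point count combined with $\sum_\lambda K(\lambda,\lambda)\,1_Q$ averaging to $1$ over periods. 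I would first try to verify this periodicity argument using the fact that $S_N$ commutes with lattice shifts.

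\textbf{Decay constants.} For \ref{cgi2}, bound
\[
N(s)\lesssim \sup_\lambda\sum_{\lambda'}N^{-2}(1+|\lambda-\lambda'|)^{2s}\,|V_{\tilde g}\tilde g(\lambda-\lambda')|^2\lesssim\|g\|_{M^1_s}^4 .
\]
Here I use that a weighted sampled sum is controlled by the weighted Wiener amalgam norm of $V_gg$, which is $\lesssim\|g\|_{M^1_s}^2$, uniformly in $N\ge1$ (the parameter $s$ in \eqref{eqn} is $2s$ here). For \ref{cgii2} the same argument yields $D_\hh\lesssim B^2$ with $\alpha$ halved.

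\textbf{Geometric constants.} $\Lambda_N$ with the normalized counting measure is doubling with $C_X$ bounded uniformly for $N\ge1$, and $\gamma=2$. Using Section~\ref{sec_bo} (Propositions~\ref{peri} and \ref{reduct}, applied to the continuum set and transferred via the inclusion $\{\lambda: d(\lambda,E)\le2^n\}\subset$ an $(2^n+1/N)$-neighbourhood), one gets $\ii(\Omega\cap\Lambda_N)\lesssim\HH(\partial\Omega)$. The hypotheses that $\partial\Omega$ is connected and $\HH(\partial\Omega)\ge1$ guarantee lower Ahlfors regularity at scales $\ge1$. For the perimeter, I would take $u=1_\Omega*\phi_1$, a mollification at scale $1$, with $\rho=|\nabla u|$ extended to the discrete Poincar\'e inequality, which gives $\pp\lesssim\HH(\partial\Omega)$.

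\textbf{Conclusion.} Plugging these constants into \eqref{eq:last_common2} with $\gamma=2$ gives exponent $2/(2s+1)$ and log power $(2s-1)/(2s+1)$, which is part \ref{cgi2}. Plugging them into Corollary~\ref{lemma:exp} gives $(\log^*(B\tau))^{2\beta}\log^*\log^*(B\tau)$, which is part \ref{cgii2}.
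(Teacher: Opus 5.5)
Your architecture matches the paper's: the corollary is Theorem~\ref{cor:gabor} with $d=1$ and $\Lambda=\tfrac1N\Z^2$ (so $\vr=1$ and $\isof_\Lambda=1$), proved through the cross-Gram RKHS of Theorem~\ref{th:frame_conc}. The other ingredients are a Janssen/Neumann-series bound on the dual window, lattice-point counts for $\ii$ and for $|\mu_\Lambda(\Omega\cap\Lambda)-|\Omega||$, and regularity of a connected planar boundary at scale $\eta\simeq\HH(\partial\Omega)\ge 1$.

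\textbf{Normalization.} The step you call the main obstacle rests on a false premise. The diagonal of $K$ is \emph{exactly} constant, and exactly $1$ for $\mu_\Lambda=N^{-2}\#$. Since $S_g$ commutes with $\pi(\lambda)$ for $\lambda\in\Lambda$, the canonical dual frame is $\{\pi(\lambda)\dual\}$. The kernel with respect to $\mu_\Lambda$ is therefore $K(\lambda,\lambda')=N^{2}\langle \pi(\lambda')\dual,\pi(\lambda)g\rangle$, so $K(\lambda,\lambda)=N^2\langle \dual,g\rangle$. By the Ron--Shen (Wexler--Raz) relations, $\langle \dual,g\rangle=|\Lambda|=N^{-2}$ (Remark~\ref{rem_gm}). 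Hence \ref{norm2} holds with no renormalization and $\mu(\Omega\cap\Lambda)=\mu_\Lambda(\Omega_\Lambda)$. Passing to $|\Omega|$ is then just \eqref{eq:comp-Omega-Omega_Lambda}, with error $\lesssim N^{-1}\HH(\partial\Omega)$. Neither of your routes supplies this. The first, as you note, makes $N_0$ depend on $\Omega$. The second presupposes that the diagonal ``averages to $1$'', which is exactly the identity $N^2\langle S_g^{-1}g,g\rangle=1$; it does not follow from $N^{-2}S_N\to \id$. Any discrepancy $c_N\neq 1$ leaves an error $|c_N-1|\,|\Omega|$ that cannot be absorbed uniformly in $\Omega$.

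\textbf{Perimeter.} Your perimeter argument does not bound $\pp(\Omega\cap\Lambda_N)$. That quantity is defined through Poincar\'e pairs $(u_j,\rho_j)$ with $u_j\to 1_{\Omega\cap\Lambda_N}$ in $L^1_{\mathrm{loc}}(\Lambda_N)$, i.e.\ pointwise on the lattice. A fixed mollification $1_\Omega*\phi_1$ is not such a sequence. On the lattice one takes $u=1_{\Omega\cap\Lambda_N}$ itself. The Poincar\'e inequality must then hold on balls of radius down to $1/N$, which forces $\rho\approx N\,1_{\partial_A\Omega_\Lambda}$. This requires a discrete isoperimetric inequality on lattice cubes, uniform in $N$: Proposition~\ref{lemdisper} (via Bobkov--G\"otze), combined with $\mu_\Lambda(\partial_A\Omega_\Lambda)\lesssim N^{-1}\HH(\partial\Omega)$ from Lemma~\ref{th:gabor_distance}. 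Your mollifier could plausibly be used to bypass $\pp$ altogether: insert $|1_\Omega(x)-1_\Omega(y)|\le|u(x)-u(y)|+|1_\Omega-u|(x)+|1_\Omega-u|(y)$ into Lemma~\ref{lem:p} and pay $\mu_\Lambda(\{\mathrm{d}(\cdot,\partial\Omega)\le 1\})\lesssim\HH(\partial\Omega)$ for the error terms. That is a different argument from the one you state. This gap affects only part (i), since Corollary~\ref{lemma:exp} needs only $\ii$.

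\textbf{Decay constant.} Your decay estimate must be stated for $V_g(N^2\dual)$, not $V_gg$. Uniformity in $N$ needs the quantitative bound $\|N^2\dual\|_{M^1_s}\le 2\|g\|_{M^1_s}$ for $N\ge N_0(g,s)$. This comes from a Neumann series for the Janssen representation of $|\Lambda|S_g-\id$ on $M^1_s$; qualitative Wiener-type invariance alone does not give it.
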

The strength of these estimates lies in the fact that \emph{the right-hand sides do not depend on any discretization parameter} (in particular, they do not involve the so-called dual Gabor window). The frame assumption of the Corollary is not a restriction, as it holds automatically for all large $N$. A version of Corollary \ref{cor_gm_prel} holds in higher dimension and for time-frequency parameters discretized along a general full-rank lattice $\Lambda$; in this setting, the eccentricity of $\Lambda$ and the regularity of $\partial\Omega$ come into play (see Theorem~\ref{cor:gabor}).

\subsubsection{Spectral stability under discretization with frames}

More generally, Theorem \ref{th:general_bound} applies to frames $\{\varphi_\lambda\}_{\lambda\in\Lambda}$ over a full-rank lattice $\Lambda$, with an analogous definition of frame multiplier associated to a compact domain $\Omega$ with lower Ahlfors regular boundary (see Section \ref{sec_frames}). The dependence on the frame is present in two ways: one has to control the spread of the diagonal entries and the off-diagonal decay of the cross-Gram matrix $\langle  \ddual_{\lambda'}, \varphi_\lambda \rangle$, where $\{\ddual_\lambda\}_{\lambda\in\Lambda}$ is the canonical dual frame. While the former can be estimated by the ratio of the frame bounds, the latter requires an explicit control in terms of the dual frame (an issue that we overcome in the time-frequency context resorting to the core of Gabor theory).

\subsubsection{The Fourier transform and slow decaying kernels}
The two-parameter spectral deviation estimates for time-frequency localization operators discussed in Section~\ref{sec_intro_tf} parallel current developments in the study of concentration operators of the Fourier transform. Here, the relevant function space is the space of bandlimited functions, that is, square-integrable functions with Fourier transform supported on a given compact interval of the real line. One-parameter spectral asymptotics go back to Landau and Widom \cite{La2,LaWi} and are a building block of
signal processing applications \cite{SlPo,LaPo2,LaPo3,Th,ZeMe}. Motivated by modern applications, two-parameter estimates for Fourier concentration operators have been recently developed in \cite{KaRoDa,Os,israel15,BoJaKa,DaWa,MR4711850,kul2}, while the case of higher dimensional general 
domains was treated in \cite{IsMa,eigenfourier,HuIsMa,HuIsMa2,kulikov2026sharp}. 

Theorem~\ref{th:general_bound} does not apply directly to Fourier concentration operators because the reproducing kernel of the space of bandlimited functions, the sinc kernel, is not integrable. The existing spectral deviation results for Fourier concentration operators use Fourier-specific techniques, such as, wave-packet expansions \cite{israel15,IsMa,eigenfourier,HuIsMa,HuIsMa2}. On the other hand, it is possible to interpret such methods as a decomposition of the space of bandlimited functions into a direct sum of spaces with fast-decaying reproducing kernels, which do fall in the scope of Theorem~\ref{th:general_bound}. We present the details in Section~\ref{sec:fou}, thus illustrating how Theorem~\ref{th:general_bound} can be applied (after some work) to certain slow-decaying kernels.

\subsubsection{Quantum harmonic analysis} Time-frequency localization has also been studied in vector-valued contexts \cite{accumulated-cohen}, often under the name of quantum harmonic analysis. In Section~\ref{sec_mixed} we show how Theorem~\ref{th:general_bound} applies to
so-called \emph{mixed-state localization operators} greatly improving the results of \cite[Theorem 4.4 and Lemma 5.3]{accumulated-cohen} in terms of the dependence on the spectral parameter.

\subsection{Organization} 
In Section~\ref{prel}, we set up notation and provide some background on vector-valued RKHS. Sections~\ref{sec_bo} and \ref{sec_disper} develop the relevant notions of (discrete) perimeter and boundary regularity.
In Section~\ref{sec:hankel} we prove estimates on the Schatten-$p$ (quasi) norms of the Hankel operator; these estimates provide the technical input for Section~\ref{subsec:deviation}, where we prove the main results of the paper. Section~\ref{sec_frames} and \ref{sec:application} are then devoted to applications to frame multipliers and Gabor multipliers.
Finally, in Section~\ref{sec_moreappl}, we provide further examples and applications. In particular, we discuss mixed state localization operators and revisit the argument in \cite{israel15} for Fourier concentration operators from our point of view.

\section{Preliminaries}
\label{prel}

\subsection{Notation}
We write $a\lesssim b$ whenever $a \le C b$ for some constant $C>0$ possibly depending on the parameters $\alpha,\beta,B$ (from  \eqref{eq:exp}, \eqref{eq_gsc2}), but not on any others. Similarly, we write $a\simeq b$ whenever $a\lesssim b \lesssim a$. Also, we will make use of the notation $\log^\ast(x):=\max\{1,\log x\}.$ 
When we need to stress the dependence of a quantity on a certain parameter, we add additional subscripts. For example, we may write \eqref{eqn} as $N_\hh(s)$.

\subsection{Vector-valued reproducing kernel Hilbert spaces} \label{sec_vvrkhs}

In this section, we  recall the definition and some basic properties of \emph{vector-valued reproducing kernel Hilbert spaces (RKHS)}. For a thorough introduction to that matter we refer to \cite[Chapter~6]{rkhs}.
Let $(X,\mu)$ be a measure space and $\HH$ be a separable Hilbert space. We consider a linear space $\mathbb{H} \subset \HH^X$ of functions from $X$ to $\HH$ with the following properties:
\begin{itemize}
\item[(a)] Each $f \in \hh$ is square integrable, i.e., $\int_X \|f(x)\|^2_{\HH}\, d\mu(x)<\infty$.
\item[(b)] If $f,g \in \hh$ are almost everywhere equal, i.e., $\mu\big(\{x\in X: f(x)\not=g(x)\}\big)=0$, then $f(x)=g(x)$ for every $x \in X$.
\item[(c)] With the embedding $\mathbb{H}\subset L^2(X,\HH)$ --- allowed by (a) and (b) --- the evaluation map $E_x:\mathbb{H}\to \HH$, $E_xf:= f(x)$ is continuous for every $x \in X$, i.e., $\|E_xf\|\leq C_x \|f\|_{L^2(X,\HH)}$, for some constant $C_x$. 
\end{itemize}
Such a space is called a \emph{vector-valued RKHS}. The \emph{reproducing kernel} $K:X\times X\to B(\HH)$
is defined as $K(x,y)=E_xE_y^\ast$, and satisfies $K(x,y)=K(y,x)^\ast$ and
$$
f(x)=\int_X K(x,y)f(y)\,d\mu(y)\, ,\qquad f\in \mathbb{H}\, .
$$
In the most common examples of RKHS, $\hh$ is a subspace of complex-valued functions, that is, $\HH=\C$.

\subsection{Trace of \texorpdfstring{{\boldmath ${T_\Omega}$}}{T Omega}}
We start by noting that the trace of $T_\Omega$ can be computed exactly as in the scalar-valued setting.
\begin{lemma}\label{lem:trace}
$T_\Omega$ is trace class and its trace is
    $$
    \emph{trace}(T_\Omega)=\mu(\Omega)\,.
    $$
\end{lemma}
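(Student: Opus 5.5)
The plan is to write $T_\Omega = (1_\Omega P)^\ast(1_\Omega P)$ and reduce to a Hilbert--Schmidt computation. Since $P$ and multiplication by $1_\Omega$ are orthogonal projections on $L^2(X,\HH)$, we have $T_\Omega = P 1_\Omega 1_\Omega P = S^\ast S$ with $S := 1_\Omega P$. Hence $T_\Omega$ is positive, and it is trace class exactly when $S$ is Hilbert--Schmidt, in which case $\operatorname{trace}(T_\Omega)=\|S\|_{\S_2}^2$. It therefore suffices to show that $\|1_\Omega P\|_{\S_2}^2=\mu(\Omega)$, which is finite because $\Omega$ is compact and $\mu$ is finite on compact sets.

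To compute this norm, write $P$ as an integral operator with operator-valued kernel $K$:
\[
Pf(x)=\int_X K(x,y)f(y)\,d\mu(y),\qquad f\in L^2(X,\HH).
\]
This follows from the reproducing property together with $\langle Pf(x),v\rangle=\langle f,E_x^\ast v\rangle$ and $E_x^\ast v=K(\cdot,x)v$. Fix an orthonormal basis $\{e_k\}$ of $\HH$. By the standard formula for vector-valued integral operators, or by expanding in a basis $\{\phi_j\otimes e_k\}$ of $L^2(X,\HH)$ and using Parseval, we get
\[
\|1_\Omega P\|_{\S_2}^2=\int_\Omega\int_X \|K(x,y)\|_{\S_2}^2\,d\mu(y)\,d\mu(x)=\int_\Omega \|K_x\|^2_{L^2(X,\S_2)}\,d\mu(x),
\]
where we use $K(x,y)^\ast=K(y,x)$ and the invariance of the Hilbert--Schmidt norm under adjoints. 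Condition~\ref{norm2} then gives $\|K_x\|_{L^2(X,\S_2)}=1$, so the integrand equals $1$ and the right-hand side is $\mu(\Omega)$.

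The main obstacle is justifying the Hilbert--Schmidt identity in this vector-valued, measure-theoretic setting. A more hands-on route is to choose an orthonormal basis $\{f_j\}$ of $\hh$ and compute
\[
\operatorname{trace}(T_\Omega)=\sum_j\langle 1_\Omega f_j,f_j\rangle=\int_\Omega\sum_j\|f_j(x)\|^2\,d\mu(x),
\]
which is justified by monotone convergence; the basis of $L^2\ominus\hh$ contributes nothing. One then checks pointwise that $\sum_j\|f_j(x)\|^2=\sum_k\sum_j|\langle f_j,E_x^\ast e_k\rangle|^2=\sum_k\|K(\cdot,x)e_k\|^2_{L^2}=\|K_x\|^2_{L^2(X,\S_2)}=1$, using Parseval in $\hh$ (valid since $E_x^\ast e_k\in\hh$) and Tonelli to exchange the sums over $k$ with the integral. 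Because every term is nonnegative, the trace computed this way is finite and equal to $\mu(\Omega)$, which also shows that $T_\Omega$ is trace class.
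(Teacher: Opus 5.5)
Your proposal is correct, and it contains two arguments. The first one matches the paper's proof. You write $T_\Omega=(1_\Omega P)^\ast(1_\Omega P)$ and compute the trace as the Hilbert--Schmidt norm of $1_\Omega P$ through the integral kernel. The paper does the same: it sums $\langle T_\Omega\psi_k g_n,\psi_k g_n\rangle$ over a product orthonormal basis $\{\psi_k g_n\}$ of $L^2(X,\HH)$, writes each entry as a triple integral of $\langle K(x,x')K(x',y)\psi_k(y)g_n,\psi_k(x)g_n\rangle$, and sums over $n$ to obtain $\langle K(x',y),K(x',x)\rangle_{\S_2}$. It then uses completeness of $\{\psi_k\}$ in $L^2(X)$ to arrive at $\int_\Omega\int_X\|K(x',y)\|_{\S_2}^2\,d\mu(y)\,d\mu(x')$. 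That is precisely the step you called ``standard''.

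Your second argument takes a genuinely different route. You use an orthonormal basis $\{f_j\}$ of $\hh$ itself, together with $E_x^\ast e_k=K(\cdot,x)e_k\in\hh$ and Parseval in $\hh$. This gives the pointwise identity $\sum_j\|f_j(x)\|^2=\|K_x\|_{L^2(X,\S_2)}^2$, equivalently $\operatorname{trace}K(x,x)=1$ under \ref{norm2}. Every quantity in it is nonnegative, so all exchanges of sums and integrals are covered by Tonelli and monotone convergence. Trace-class membership then follows directly from positivity. You also never need the integral representation of $P$ on all of $L^2(X,\HH)$, nor joint measurability of $K$.

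The paper's computation, by contrast, interchanges sums and integrals of sign-indefinite kernel products. It also uses an $L^2(X)$ expansion inside an integral, which needs some extra justification that is left implicit there. What the paper's route buys is that the same product-basis kernel manipulation is reused as the template in the proof of Proposition~\ref{th:H_norm}.
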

\begin{proof}
  Let $\{g_n\}_{n \in J}\subset \HH$ and $\{\psi_k\}_{k\in I}\subset L^2(X)$ be two  orthonormal bases. Notice that $I$ and $J$ are at most countable. Then $\{\psi_k g_n\}_{(k,n)\in I\times J}$ is an orthonormal basis for $L^2(X,\HH)$. Since $T$ is a positive operator, it suffices to compute 
    \begin{align*}
        \text{trace}(T_\Omega)&=\sum_{(k,n)\in I\times J}\langle T_\Omega \psi_k g_n,\psi_k g_n\rangle_{L^2(X,\HH)}
        \\ &=\sum_{(k,n)\in I\times J}\int_X\int_\Omega\int_X  \big\langle K(x,x')K(x',y)\psi_k(y)g_n,\psi_k(x)g_n\big\rangle \,d\mu(y)d\mu(x')d\mu(x)  
         \\ &=\sum_{k\in I}\int_X\int_\Omega\int_X \psi_k(y)\overline{\psi_k(x)} \big\langle  K(x',y)  ,K(x',x)\big\rangle_{\S_2} \, d\mu(y) d\mu(x') d\mu(x)
            \\ &=\int_\Omega \int_X \sum_{k\in I} \Big\langle    \big\langle  K(x',y)  ,K(x',\cdot)\big\rangle_{\S_2}  ,\psi_k \Big\rangle_{L^2(X)} \psi_k(y) \, d\mu(y) d\mu(x')
            \\ &=\int_\Omega \int_X \| K(x',y)\|_{\S_2}^2 \,d\mu(y) d\mu(x')=\mu(\Omega)\,,
    \end{align*}
    where in the last step we used \ref{norm2}.
\end{proof}

\subsection{Simplified off-diagonal decay}\label{sec_simod}
If $X$ has a group structure such that $\mu(E)=\mu(gE)$, 
for every Borel set $E$,
$\mathrm{d}(x,x')=\mathrm{d}(gx,gx')$ and $K(x,x')=K(gx,gx')$  and every $g,x,x'\in X$, then the measure of off-diagonal decay of the reproducing kernel \eqref{eqn} simplifies to
\begin{align}
N(s)=  \int_{X} (1+\mathrm{d}(e,x))^{s} \|K(e,x)\|_{\S_2}^2 \, d\mu(x)\,,
\end{align}
where $e$ is the neutral element. Indeed,
with the notation
$A_{n,x}=B_{2^n}(x)\smallsetminus B_{2^{n-1}}(x)$ for $n\ge 1$ and $A_{0,x}=B_{1}(x)$,
\begin{align*}
    \int_{A_{n,x}} (1+ \mathrm{d}(x,x'))^{s} \|K(x,x')\|_{\S_2}^2 \, d\mu(x')
    &= \int_{x^{-1}A_{n,x}} (1+ \mathrm{d}(x,xx'))^{s} \|K(x,xx')\|_{\S_2}^2 \, d\mu(x')
    \\&= \int_{A_{n,e}} (1+ \mathrm{d}(e,x'))^{s} \|K(e,x')\|_{\S_2}^2 \, d\mu(x')\,.
\end{align*}

\section{Boundary and perimeter}\label{sec_bo}

\subsection{Ahlfors regularity and codimension one measure}
\begin{definition}
For a set $E\subseteq X$, and $r>0$ define
\[\HH_r(E)=\inf\Big\{\sum_j \tfrac{\mu(B_{r_j}(x_j))}{r_j}:\ E\subseteq\bigcup_{j} B_{r_j}(x_j),\  r_j\le r\Big\}\,,\]
where $j$ runs over an at most countable index set. The \emph{codimension one Hausdorff measure} is
\[\HH(E)=\lim_{r\to 0} \HH_r(E)\,.\]
\end{definition}
The codimension one measure of the boundary of a set, $\HH(\partial E)$, provides a natural notion of perimeter, which we 
call \emph{geometric perimeter}. We shall compare it to \eqref{eq_per} and relate it to the inflation constant. For the moment, we use the codimension one measure to introduce the following notion of regularity.
\begin{definition}\label{defreg}
    We say a Borel set $E\subseteq X$ is regular at scale $\eta>0$ if there is a constant $\kappa>0$, such that
    \[\HH(E\cap B_r(x))\ge \frac{\kappa}{r}\, {\mu(B_r(x))},\qquad 0<r\le \eta, \qquad x\in E \,.\]
\end{definition}
This definition essentially corresponds to the notion of (lower) \emph{Ahlfors regularity} (see \cite[Definition I.1.13]{DaSe}). When applied to the boundary of a set $E=\partial \Omega$, the condition can be interpreted as a curvature bound.

\subsection{Perimeter in Poincar\'e spaces}\label{sec:perimeter}
In this section we consider so-called \emph{doubling Poincar\'e spaces} and show that, in that setting, the notion of perimeter introduced in \eqref{eq_per} can be estimated in terms of the codimension one Hausdorff measure of the topological boundary.

We start with a few definitions. A curve is a continuous map $\sigma: [a,b] \to X$ with $a,b\in\R$. Its length is defined as
    \[\ell(\sigma)=\sup \sum_{j=1}^n \mathrm{d}(\sigma(t_j),\sigma(t_{j+1}))\,,\]
    where the supremum is taken over all finite partitions $a=t_1\le t_2 \le \ldots \le t_n\le t_{n+1}=b$. A curve of finite length is called \emph{rectifiable}. 
    For a nonnegative Borel function $\rho:X\to [0,\infty)$ we define:
    \[\int_\sigma \rho = \int_0^{\ell(\sigma)}\rho\circ\widetilde\sigma(t)\, dt\,,\]
    where $\widetilde\sigma$ is the unique arc length reparametrization of $\sigma$. 
    We say $\rho$ is an \emph{upper gradient} (originally introduced as a \emph{very weak gradient} in \cite{HK}) of a real-valued function $u$ on $X$ if
    \[|u(x)-u(y)|\le \int_{\sigma_{xy}} \rho\,,\]
    for every rectifiable curve $\sigma_{xy}$ joining $x$ and $y$.

    The \emph{total variation} of $u\in L^1_{\text{loc}}(X)$ is 
    \[\|Du\|=\inf\Big\{\liminf_{j\to\infty}\int_X \rho_{j} \,d\mu : \ \rho_{j} \text{ upper gradient of } u_j, u_j \text{ locally Lipschitz, } u_j\xrightarrow{L^1_{\text{loc}}(X)} u\Big\}\,.\]
    In this setting,   $\|D1_E\|$ provides another notion of perimeter for a measurable set $E\subseteq X$. Finally, a metric space $X$ is called a \emph{1-Poincar\'e space} (Poincar\'e space for short) if there are constants $C_P,\lambda>0$ such that for every ball $B_r(x)$, every locally integrable function $u$ on $X$, and every upper gradient $\rho$ of u, we have
    \[\fint_{B_r(x)}|u-u_{B_r(x)}| \,d\mu\le C_P r\fint_{B_{\lambda r}(x)}\rho \,d\mu,\]
    where $u_{B_r(x)}=\fint_{B_r(x)}u \,d\mu\,.$

Some examples of Poincar\'e spaces include $(\R^n,\omega(x) dx)$ with $\omega$ in the Muckenhoupt weight class $\mathcal{A}_1$,  complete Riemannian manifolds with non-negative Ricci curvature and the Heisenberg group (see \cite[Appendix A]{BB} and the references therein for a more comprehensive list).
We note the following very useful estimate from \cite{AmMiPa}.

\begin{proposition}[Geometric perimeter controls the  Poincar\'e perimeter]\label{peri}
Let $X$ be a doubling  Poincar\'e  space   with constant $C_X$. Then, for every measurable set $E$,
\[
\pp(E)\le C_P C_X  \HH(\partial E)\,.
\]
\end{proposition}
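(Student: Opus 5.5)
The plan is to work directly with the definition \eqref{eq_per} of $\pp(E)$ as an infimum. It suffices to exhibit locally Lipschitz functions $u_j\to 1_E$ in $L^1_{\text{loc}}$, together with Borel functions $\rho_j$ such that each $(u_j,\rho_j)$ is a Poincar\'e pair and $\liminf_j\int_X\rho_j\,d\mu\le C_PC_X\HH(\partial E)$. The natural choice is to start from upper gradients, since in a Poincar\'e space every upper gradient $\rho$ of $u$ gives a Poincar\'e pair $(u,C_P\rho)$ with the same $\lambda$. Consequently $\pp(E)\le C_P\,\|D1_E\|$, and the whole problem reduces to the known inequality $\|D1_E\|\le C_X\HH(\partial E)$. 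This is the estimate quoted from \cite{AmMiPa}. I will nevertheless sketch it so that the constant is visible.

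For the reduction, assume $\HH(\partial E)<\infty$; otherwise there is nothing to prove. Fix $r>0$ and take a cover of $\partial E$ by balls $B_{r_i}(x_i)$ with $r_i\le r$ and $\sum_i\mu(B_{r_i}(x_i))/r_i\le\HH_r(\partial E)+r$. Define cutoffs
\[
\phi_i(x)=\max\Big\{0,\,1-\tfrac{\mathrm{d}(x,B_{r_i}(x_i))}{r_i}\Big\},
\]
which equal $1$ on $B_{r_i}(x_i)$, are supported in $B_{2r_i}(x_i)$, and are $1/r_i$-Lipschitz. Put $\Phi=\min\{1,\sum_i\phi_i\}$ and $u_r=(1-\Phi)1_E$.

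The function $u_r$ is locally Lipschitz. Indeed, near points of $\partial E$ it vanishes on a neighbourhood. Away from $\partial E$, $1_E$ is locally constant. Local finiteness of the sum can be arranged by compactness in each bounded region, and if needed I would take a finite subcover on exhaustion pieces.

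Next I need an upper gradient. A standard fact is that $g_i=r_i^{-1}1_{B_{2r_i}(x_i)}$ is an upper gradient of $\phi_i$. Hence $\sum_i g_i$ is an upper gradient of $u_r$. Curves crossing $\partial E$ pass through the region where $\Phi=1$, so $1_E$ contributes no jump along them.

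The mass bound then uses the doubling condition \ref{doubling}:
\[
\int_X\sum_i g_i\,d\mu\le\sum_i\frac{\mu(B_{2r_i}(x_i))}{r_i}\le C_X(\HH_r(\partial E)+r).
\]
Finally, $u_r\to1_E$ in $L^1_{\text{loc}}$ as $r\to0$. The difference is supported in $E\cap\bigcup_iB_{2r_i}(x_i)$. By the bound just proved, the measure of this union is at most $C_X r(\HH_r(\partial E)+r)\to0$.

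Letting $r\to0$ along a sequence gives $\pp(E)\le C_PC_X\HH(\partial E)$.

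The main obstacle is the upper-gradient claim for $u_r$ across $\partial E$. A rectifiable curve from $x\in E$ to $y\notin E$ must meet $\partial E$, and hence enter some $B_{r_i}(x_i)$ where $u_r=0$. So one splits the curve at the first and last such times and applies the upper-gradient inequality for $1-\Phi$ on the pieces. A secondary issue is ensuring local Lipschitzness with possibly countably many balls. If these technicalities become heavy, I would simply cite \cite{AmMiPa} for $\|D1_E\|\le C_X\HH(\partial E)$ and keep only the reduction $\pp\le C_P\|D\cdot\|$, which is immediate from the definitions.
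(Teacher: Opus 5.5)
Your proposal is correct and follows the paper's proof. The paper also observes that $(u,C_P\rho)$ is a Poincar\'e pair for every upper gradient $\rho$, which gives $\pp(E)\le C_P\|D1_E\|$, and it then simply cites $\|D1_E\|\le C_X\HH(\partial E)$ from \cite{AmMiPa}. Your covering sketch of that second estimate is also sound: the step across the boundary works because the image of a curve joining $E$ to $E^c$ meets $\partial E$, where $\Phi=1$. The local finiteness you worry about does not need compactness. After discarding balls that miss $\partial E$, it follows from $\sum_i\mu(B_{r_i}(x_i))/r_i<\infty$ together with the doubling condition.
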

\begin{proof}
Since $(u,C_P\rho)$ is a Poincar\'e pair whenever $\rho$ is an upper gradient of $u$, we have
    \[\pp(E)\le C_P \|D1_E\|\,.\]
In addition, by \cite[Theorem 4.6]{AmMiPa}, \begin{align}\label{eq_diex}
    \|D1_E\|\le C_X \HH(\partial E)\,.
    \end{align}
    In fact, \eqref{eq_diex} holds with the potentially smaller measure theoretical boundary $\partial^*E$ on the right-hand side (see also \cite{La} for more refined estimates).
\end{proof}
While Poincar\'e spaces are connected (see, for example, \cite[Proposition 4.2]{BB}),
some of the applications that motivate us involve disconnected spaces, and that is why we introduced the 
more general notion of perimeter $\pp(E)$, cf. \eqref{eq_per}. 

\subsection{Quasiconvexity}
We now look into Condition~\ref{stripe}. If this condition holds, one may expect the corresponding constant $\ii(\Omega)$ to be related to
the size of the boundary of $\Omega$. While this intuition may fail in general --- since the distance to a set may not be attained at its boundary --- it is correct in the Euclidean space, and, as we shall see, in the following context.

\begin{definition}\label{defconvex}
    We say that a metric space $X$ is \emph{$M$-quasiconvex}, where $M\ge 1$, if every pair of points $x,y\in X$ can be joined by a curve of length $\le M \mathrm{d}(x,y)$.
\end{definition}

It is easy to check that if $X$ is $M$-quasiconvex, then for $E\subseteq X$ with nonempty boundary, for every $x\in E^c$,
\[\mathrm{d}(x,\partial E)\le M \mathrm{d}(x,E)\,.\] 
Normed spaces are of course $1$-quasiconvex. Furthermore,
complete doubling Poincar\'e spaces are quasiconvex \cite[Theorem 4.32]{BB}. 

We now show that \ref{doubling} implies \ref{stripe} for quasiconvex spaces, and that under additional Ahlfors regularity, the geometric perimeter controls the inflation constant.

\begin{proposition}[Control of inflation constant by geometric perimeter]
    \label{reduct}
    Let $X$ be a doubling $M$-quasiconvex space and consider $\gamma=\log_2(C_X)$ its doubling dimension. For a compact set $\Omega\subseteq X$ let $E$ denote either $\Omega$ or $\Omega^c$. Then the following hold for every $R>0$:
     \begin{align}
        \label{eqred1}
        \mu\big(\{x\in E^c : \ \mathrm{d}(x,E)\le R\}\big) &\le (8M)^\gamma \max\{R^\gamma,1\} \mu\big(\{x\in X : \ \mathrm{d}(x,\partial \Omega)\le 1\}\big)\,.
    \end{align}
    Additionally, if $\partial \Omega$ is regular at scale $\eta>0$ with constant $\kappa>0$, then
    \begin{align}
        \label{eqred2}
        \mu\big(\{x\in E^c : \ \mathrm{d}(x,E)\le R\}\big) 
        &\le  \frac{(8M)^\gamma}{\kappa} R\big(1+\big(\tfrac{R}{\eta}\big)^{\gamma-1}\big) \HH(\partial\Omega)\,.
    \end{align}
(Here, we interpret $\mathrm{d}(x,\emptyset)=\infty$.)
\end{proposition}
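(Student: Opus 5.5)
Both estimates will come from a covering argument: cover the inflated set by balls centred on $\partial\Omega$, then bound the measure of those balls by doubling.

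\textbf{Step 1: localize near the boundary.} Fix $R>0$ and let $E$ be $\Omega$ or $\Omega^c$. If $E=\emptyset$ or $\partial\Omega=\emptyset$, the left-hand side is zero, using the convention $\mathrm{d}(x,\emptyset)=\infty$; indeed, in a quasiconvex (hence path connected) space, $\partial\Omega=\emptyset$ forces $E=\emptyset$ or $E^c=\emptyset$. Otherwise, since $\partial E=\partial\Omega$, quasiconvexity gives $\mathrm{d}(x,\partial\Omega)\le M\mathrm{d}(x,E)\le MR$ for every $x\in E^c$ with $\mathrm{d}(x,E)\le R$. Hence the set in question lies in the $MR$-neighbourhood $U_{MR}$ of $\partial\Omega$.

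\textbf{Step 2: cover by a net on $\partial\Omega$.} Set $r=\max\{R,1\}$. Choose a maximal $r$-separated set $\{x_j\}\subset\partial\Omega$; it is finite because $\partial\Omega$ is compact. Then $U_{MR}\subseteq\bigcup_j B_{2Mr}(x_j)$, since $M\ge1$. The balls $B_{r/2}(x_j)$ are pairwise disjoint and lie in $U_{r/2}$. Doubling gives $\mu(B_{2Mr}(x_j))\le C_X^{k}\mu(B_{r/2}(x_j))$ with $2^k\approx 4M$. Writing $C_X^k=2^{\gamma k}$, this yields $\mu(U_{MR})\le(8M)^\gamma\,\mu(U_{r/2})$, with some slack for rounding $k$ up.

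To reach \eqref{eqred1} I instead use a net at scale $1/2$, or equivalently cover $U_{MR}$ by balls of radius $2Mr$ around $1/2$-separated points. The disjoint unit-scale balls $B_{1/2}(x_j)$ lie in $U_1$. The factor $\max\{R^\gamma,1\}$ comes from $\mu(B_{2Mr}(x_j))\le(8M r)^\gamma\mu(B_{1/2}(x_j))$, which follows from iterated doubling: $\mu(B_{\rho}(x))\le C_X(\rho/\rho')^{\gamma}\mu(B_{\rho'}(x))$. The integer rounding of $\log_2$ is absorbed into the constant $8^\gamma$, and getting exactly $(8M)^\gamma$ is the main bookkeeping. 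Summing over $j$ and using disjointness gives \eqref{eqred1}.

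\textbf{Step 3: prove \eqref{eqred2} via regularity.} Take instead a maximal $s$-separated net on $\partial\Omega$ with $s=\min\{r,\eta\}$, where $r$ is of order $R$; now no "$\max\{R,1\}$" is needed. The balls $B_{s/2}(x_j)$ are disjoint. Lower Ahlfors regularity gives
\[
\mu(B_{s/2}(x_j))\le \frac{s}{2\kappa}\,\HH(\partial\Omega\cap B_{s/2}(x_j)).
\]
Summing and using disjointness together with the additivity of $\HH$ on disjoint Borel sets (since $\HH$ is a Borel-regular outer measure, countable additivity holds on Borel sets) gives $\sum_j\mu(B_{s/2}(x_j))\le \frac{s}{2\kappa}\HH(\partial\Omega)$. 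Each covering ball $B_{2MR+s}(x_j)\supseteq$ the relevant part of $U_{MR}$ has measure at most $(8M)^\gamma (R/s)^{\gamma}$ times $\mu(B_{s/2}(x_j))$, up to rounding.

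There are two cases. If $R\le\eta$, take $s=R$, which gives the bound $\lesssim R\,\HH(\partial\Omega)/\kappa$. If $R>\eta$, take $s=\eta$, which gives $\eta (R/\eta)^\gamma=R(R/\eta)^{\gamma-1}$. Together these produce $R\big(1+(R/\eta)^{\gamma-1}\big)$.

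The main obstacle I expect is the constant tracking in the doubling iterations, namely matching $(8M)^\gamma$ exactly. The additivity of $\HH$ needs a brief justification as a Carathéodory construction (a metric outer measure).
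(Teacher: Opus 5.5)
Your proposal is correct and is essentially the paper's proof. You reduce via quasiconvexity to the $MR$-neighbourhood of $\partial\Omega$, take disjoint balls centred on $\partial\Omega$ (your maximal separated net plays the role of the paper's Vitali subfamily), compare covering balls with disjoint ones by iterated doubling, and for \eqref{eqred2} apply lower regularity at scale $\min\{R,\eta\}$; the paper uses $\min\{MR,\eta\}$ and then needs $\gamma\ge 1$, which your case split avoids. To land exactly on $(8M)^\gamma$, take a maximal $1$-separated (not $\tfrac12$-separated) set in Step 2, and covering balls of radius $MR+s$ rather than $2MR+s$ in Step 3, so the ratio of radii is at most $4Mr$, respectively $4MR/s$.
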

It is worth mentioning that in the context of \eqref{eqred2} and assuming $X$ has more than one point, $\gamma\ge 1$ (see \cite{SoTr}). For $X=\R^d$, \eqref{eqred2} also follows from \cite{Ca}.

\begin{proof}[Proof of Proposition~\ref{reduct}]
    Note that $\mathrm{d}(x,\partial \Omega)\le M \mathrm{d}(x,E)$ for every $x\in E^c$,
    and so,
    \[\mu(\{x\in E^c : \ \mathrm{d}(x,E)\le R\})\le \mu(\{x\in X : \ \mathrm{d}(x,\partial \Omega)\le M R\})\,.\]
    We let $r>0$ and cover $\partial \Omega$ with all the balls of radius $r$ centered at points of that set and extract a Vitali sub-family, that is, a collection of disjoint balls $\{B_{r}(x_j)\}_{j=1}^J$, with $x_j \in \partial\Omega$, 
    such that $\partial\Omega\subseteq \bigcup_{j=1}^J B_{3r}(x_j)$. We start by choosing $r=1$, which gives
    \[\sum_{j=1}^J\mu( B_{1}(x_j))=\mu\Big(\bigcup_{j=1}^J B_{1}(x_j)\Big)\le \mu(\{x\in X : \ \mathrm{d}(x,\partial\Omega)\le 1\})\,.\]
    In addition, if $\mathrm{d}(x,\partial\Omega)$ is attained at $y\in\partial\Omega$, there exists $1\le j\le J$ such that
    \[\mathrm{d}(x,x_j)\le \mathrm{d}(x,y)+\mathrm{d}(y,x_j)\le \mathrm{d}(x,\partial\Omega)+3 \,.\]
    As a consequence, if $2^k\le 3+MR<2^{k+1}$,    
    \begin{align*}
        \mu(\{x\in E^c : \ \mathrm{d}(x,E)\le R\}) &\le \mu(\{x\in X : \ \mathrm{d}(x,\partial \Omega)\le M R\})
        \\ & \le \sum_{j=1}^J\mu( B_{2^{k+1}}(x_j)) \le 2^{\gamma(k+1)} \sum_{j=1}^J\mu( B_{1}(x_j))
        \\
        &\leq 2^\gamma(3+M R)^\gamma\sum_{j=1}^J\mu( B_{1}(x_j))
        \\ & \le (8M )^\gamma \max\{R^\gamma,1\} \mu(\{x\in X : \ \mathrm{d}(x,\partial\Omega)\le 1\})\,,
    \end{align*}
    where we used $\gamma=\log_2(C_X)$.
    This proves \eqref{eqred1}. Regarding \eqref{eqred2}, we
    choose $r=\min\{MR,\eta\}$, consider a Vitali sub-family of balls as before and repeat the previous argument to conclude that
    \begin{align*}
        \mu(\{x\in E^c : \ \mathrm{d}(x,E)\le R\}) & \le 2^\gamma (3+M R r^{-1})^\gamma \sum_{j=1}^J\mu( B_{r}(x_j))
        \\&\le  2^\gamma r(3+M R r^{-1})^\gamma \kappa^{-1}\HH(\partial\Omega) \notag
        \\&\le 8^\gamma r (M R r^{-1})^\gamma \kappa^{-1}\HH(\partial\Omega)
        \\&\le  8^\gamma \max\{MR,(MR)^\gamma \eta^{-\gamma+1}\} \kappa^{-1}\HH(\partial\Omega) \notag
        \,.
    \end{align*}
    We can assume without loss of generality that $X$ has more than one point, which implies that $C_X\ge 2$, or equivalently, $\gamma\ge1$ (see \cite{SoTr}). In particular, \[\max\{MR,(MR)^\gamma \eta^{-\gamma+1}\}\le M^\gamma R\big(1+\big(\tfrac{R}{\eta}\big)^{\gamma-1}\big)\,,\]
    and \eqref{eqred2} follows. 
\end{proof}

\subsection{Non-local perimeters}

We now study expressions of the form $\int_{\Omega} \int_{\Omega^c} \varphi(x,y)\,dx\,dy$, known as \emph{non-local perimeters}, in terms of $\Omega$ and the off-diagonal decay of $\varphi$. Estimates of this kind are well-known for $\R^d$, and usually stated for convolution kernels $\varphi(x-y)$ (e.g. \cite[Proposition 1.4]{MaRoTo}). We derive variants of such estimates for rather general spaces.

\begin{lemma}[First non-local perimeter estimate] \label{lem:p2}
Let $\Omega\subseteq X$ be a compact set and consider $\varphi: X\times X \to [0,\infty)$ measurable. Assuming that Condition~\ref{stripe} holds, we have
    \begin{align} \label{eq:p2}
        \int_{\Omega}\int_{\Omega^c} \varphi(x,y) \, d\mu(y)d\mu(x) 
      \lesssim  
            \ii(\Omega) \sum_{n\ge 0} \sup_{x\in X} \int_{A_{n,x}} (1+\mathrm{d}(x,y))^{\gamma}\varphi(x,y)  \, d\mu(y)\,,
    \end{align}
    where $A_{n,x}=B_{2^n}(x)\smallsetminus B_{2^{n-1}}(x)$ for $n\ge 1$ and $A_{0,x}=B_{1}(x)$.
\end{lemma}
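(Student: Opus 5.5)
We decompose the inner integral dyadically according to the distance between $x$ and $y$. For $x\in\Omega$ and $y\in\Omega^c$, the annuli $A_{n,x}$, $n\ge 0$, partition $X$. Hence
\[
\int_{\Omega}\int_{\Omega^c}\varphi(x,y)\,d\mu(y)d\mu(x)=\sum_{n\ge0}\int_\Omega\int_{\Omega^c\cap A_{n,x}}\varphi(x,y)\,d\mu(y)\,d\mu(x).
\]
If $y\in\Omega^c\cap A_{n,x}$, then $\mathrm{d}(x,\Omega^c)\le \mathrm{d}(x,y)<2^n$. So the $x$-integration in the $n$-th term is effectively restricted to the strip $S_n:=\{x\in\Omega:\ \mathrm{d}(x,\Omega^c)\le 2^n\}$. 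This is exactly the set appearing in \eqref{eq_ii} with $E=\Omega^c$, so Condition~\ref{stripe} gives
\[
\mu(S_n)\le \ii(\Omega)\,2^{\gamma(n-1)}.
\]
(When $\Omega^c=\emptyset$ both sides of \eqref{eq:p2} are zero, by the convention $\mathrm{d}(x,\emptyset)=\infty$.)

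Next we bound the $n$-th term crudely by
\[
\mu(S_n)\cdot \sup_{x\in X}\int_{A_{n,x}}\varphi(x,y)\,d\mu(y)\le \ii(\Omega)\,2^{\gamma(n-1)}\sup_{x\in X}\int_{A_{n,x}}\varphi(x,y)\,d\mu(y).
\]
We then absorb the factor $2^{\gamma(n-1)}$ into the weight. For $n\ge1$ and $y\in A_{n,x}$ we have $\mathrm{d}(x,y)\ge 2^{n-1}$, so $2^{\gamma(n-1)}\le (1+\mathrm{d}(x,y))^\gamma$. For $n=0$ the factor is $2^{-\gamma}\le 1\le (1+\mathrm{d}(x,y))^\gamma$. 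Summing over $n$ yields \eqref{eq:p2}, in fact with implied constant $1$.

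The only point needing care is measurability and the interchange of the sum and integrals; both follow from Tonelli since $\varphi\ge0$. There is no genuine obstacle. The key observation is that pairs $(x,y)$ at distance about $2^n$ force $x$ into the inflated boundary strip, whose measure is controlled by $\ii(\Omega)$ at the matching dyadic scale. One could equally integrate first in $x\in\Omega$ near $\Omega^c$ using the symmetric case $E=\Omega$, but the choice above suffices.
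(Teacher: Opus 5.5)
Your proof is correct and follows essentially the same route as the paper. Both split dyadically in $\mathrm{d}(x,y)$, restrict $x$ to the strip $\{x\in\Omega:\mathrm{d}(x,\Omega^c)\le 2^n\}$ (controlled by Condition~\ref{stripe} with $E=\Omega^c$), and absorb $2^{\gamma(n-1)}$ into the weight $(1+\mathrm{d}(x,y))^\gamma$ on $A_{n,x}$; the only cosmetic difference is that the paper inserts the weight before pulling out the measure of the strip, while you pull it out first.
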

\begin{proof}
For $n\ge 0$, define
\begin{align*}
 \Omega_n&=\{x \in \Omega: \  \mathrm{d}(x,\Omega^c)\le 2^{n}\}\,. 
\end{align*}
Notice that if $x\in \Omega$ and $y\in \Omega^c\cap A_{n,x}$, then 
\[ \mathrm{d}(x,\Omega^c)\le \mathrm{d}(x,y)\le 2^{n}\,,\]
and therefore,  $x\in \Omega_n$. So,
\begin{align*}
\int_{\Omega}\int_{\Omega^c} \varphi(x,y) \, d\mu(y)d\mu(x) 
& = \sum_{n\ge 0} \int_{\Omega}\int_{\Omega^c\cap A_{n,x}} \varphi(x,y) \, d\mu(y)d\mu(x) 
\\ & = \sum_{n\ge 0} \int_{\Omega_n}\int_{\Omega^c\cap A_{n,x}} \varphi(x,y) \, d\mu(y)d\mu(x) 
\\ &\le \sum_{n\ge 0}  2^{-\gamma(n-1)}  \mu(\Omega_n)\, \sup_{x\in X} \int_{A_{n,x}} (1+\mathrm{d}(x,y))^{\gamma}\varphi(x,y)  \, d\mu(y) 
\\ &\le \ii(\Omega) \sum_{n\ge 0} \sup_{x\in X} \int_{A_{n,x}} (1+\mathrm{d}(x,y))^{\gamma}\varphi(x,y)  \, d\mu(y) \,,
\end{align*}
where in the last step we used \ref{stripe}.
\end{proof}

\begin{lemma}[Second non-local perimeter estimate] \label{lem:p}
Let $\Omega\subseteq X$ be a compact set and consider $\varphi: X\times X \to [0,\infty)$ measurable. Assuming that Condition~\ref{doubling} holds, we have
    \begin{align} \label{eq:p}
        \int_{\Omega}\int_{\Omega^c} \varphi(x,y) \, d\mu(y)d\mu(x) 
        \lesssim  {C_X^4}
            \pp(\Omega) \sum_{n\ge 0} \begin{multlined}[t] \Big(\sup_{x\in X} \int_{A_{n,x}} (1+\mathrm{d}(x,y))\varphi(x,y)  \, d\mu(y)
            \\ + \sup_{y\in X} \int_{A_{n,y}} (1+\mathrm{d}(x,y))\varphi(x,y)  \, d\mu(x) \Big)\,,
        \end{multlined}
    \end{align}
    where $A_{n,x}=B_{2^n}(x)\smallsetminus B_{2^{n-1}}(x)$ for $n\ge 1$ and $A_{0,x}=B_{1}(x)$.
\end{lemma}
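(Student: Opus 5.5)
The idea is to reduce the non-local perimeter to a local Poincaré inequality at every dyadic scale and then sum the scales. By the definition \eqref{eq_per}, fix a sequence of Poincaré pairs $(u_j,\rho_j)$ with $u_j$ locally Lipschitz, $u_j\to 1_\Omega$ in $L^1_{\mathrm{loc}}$, and $\liminf_j\int\rho_j\,d\mu$ close to $\pp(\Omega)$.

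First split by scale. Write
\[
\int_\Omega\int_{\Omega^c}\varphi=\sum_{n\ge0}\int_X\int_{A_{n,x}}1_\Omega(x)1_{\Omega^c}(y)\varphi(x,y)\,d\mu(y)\,d\mu(x),
\]
and note that $1_\Omega(x)1_{\Omega^c}(y)\le |1_\Omega(x)-1_\Omega(y)|$. By Fatou's lemma we may replace $1_\Omega$ by $u_j$ on each compact piece; truncating $u_j$ to $[0,1]$ and passing to an a.e. convergent subsequence, this costs only a $\liminf$. It therefore suffices to bound, for each $n$ with $r=2^n$,
\[
I_n(u):=\int_X\int_{A_{n,x}}|u(x)-u(y)|\varphi(x,y)\,d\mu(y)\,d\mu(x)
\]
by $C_X^4\,2^n\big(\sup_x\int_{A_{n,x}}\varphi+\sup_y\int_{A_{n,y}}\varphi\big)\int\rho\,d\mu$. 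Since $1+\mathrm d(x,y)\gtrsim 2^n$ on $A_{n,x}$, the factor $2^n$ is absorbed into the weight $(1+\mathrm d)$ appearing in \eqref{eq:p}.

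For a fixed scale, insert ball averages. Take a maximal $r$-separated net $\{z_k\}$. The balls $B_{2r}(z_k)$ cover $X$, and by doubling the enlarged balls $B_{c\lambda r}(z_k)$ have overlap bounded by a power of $C_X$. If $x\in B_{2r}(z_k)$ and $y\in A_{n,x}$, then both points lie in $B:=B_{4r}(z_k)$, and
\[
|u(x)-u(y)|\le|u(x)-u_B|+|u(y)-u_B|.
\]
The term with $u(x)$ gives $\int_{B}|u-u_B|\,d\mu(x)\cdot\sup_x\int_{A_{n,x}}\varphi$. For the term with $u(y)$, apply Fubini in $(x,y)$: we have $x\in A_{n,y}$ up to boundary conventions, which is handled by enlarging to the neighbouring annuli $A_{n\pm1}$ and costs only a harmless reindexing. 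This gives $\int_B|u-u_B|\,d\mu(y)\cdot\sup_y\int_{A_{n,y}}\varphi$. The Poincaré pair property then yields
\[
\int_B|u-u_B|\le 4r\,\frac{\mu(B)}{\mu(B_{4\lambda r})}\int_{B_{4\lambda r}(z_k)}\rho\le 4r\int_{B_{4\lambda r}(z_k)}\rho .
\]
Summing over $k$ with the bounded overlap gives the per-scale bound. Summing over $n$, letting $j\to\infty$, and finally taking the infimum over sequences completes the argument.

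The main obstacle is the overlap count. The balls $B_{4\lambda r}(z_k)$ with $\lambda$ arbitrary have overlap of order $C_X^{\log_2(\lambda)+O(1)}$, so a $\lambda$-dependence seems unavoidable unless $\lambda$ is absorbed into the constant. The stated $C_X^4$ suggests one should instead use the Poincaré inequality on the smaller ball and compare averages over nested balls using doubling a bounded number of times. I would first attempt to track the constant via the bounded-overlap lemma for separated nets ($\#$ overlap $\le C_X^{\log_2(\text{ratio})+1}$). If that fails to give $C_X^4$, I would accept a constant depending also on $\lambda$, which the implied $\lesssim$ arguably permits.
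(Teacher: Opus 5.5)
The gap is the one you flag at the end, and your fallback does not close it. In the definition of a Poincar\'e pair, $\lambda$ belongs to the pair. Each $(u_j,\rho_j)$ in a sequence nearly realizing $\pp(\Omega)$ in \eqref{eq_per} comes with its own $\lambda_j\ge 1$, and nothing keeps $\lambda_j$ bounded; the paper's own discrete example, Proposition~\ref{lemdisper}, uses $\lambda=C_d\vr$. Your net argument gives, for each $j$, a constant $C_X^{\log_2\lambda_j+O(1)}$ in front of $\int\rho_j\,d\mu$. After taking $\liminf_j$ this controls nothing if $\lambda_j\to\infty$. So a $\lambda$-dependent constant is not a weaker form of the lemma: it does not bound the left-hand side by $\pp(\Omega)$ at all. (The paper's $\lesssim$ also only allows dependence on $\alpha,\beta,B$.) Your suggestion to apply the Poincar\'e inequality on a smaller ball does not help either, because its right-hand side always lives on the $\lambda$-dilate.

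The loss occurs when you bound $\mu(B)/\mu(B_{4\lambda r}(z_k))\le 1$: that ratio is exactly what compensates the overlap of the dilated balls. If you keep it, then for fixed $w$ every net point with $w\in B_{4\lambda r}(z_k)$ satisfies $\mu(B_{4\lambda r}(z_k))\ge C_X^{-1}\mu(B_{4\lambda r}(w))$. The corresponding balls $B_{4r}(z_k)$ lie in $B_{8\lambda r}(w)$ with $\lambda$-independent overlap. Hence $\sum_k \frac{\mu(B_{4r}(z_k))}{\mu(B_{4\lambda r}(z_k))}1_{B_{4\lambda r}(z_k)}(w)\le C_X^{O(1)}$ uniformly in $\lambda$.

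Cleaner, and within the stated power of $C_X$, is to replace the net by an average over all centres. For $y\in A_{n,x}$ and $z\in B_r(x)$ one has $x,y\in B_{2r}(z)$, so
\[
|u(x)-u(y)|\le \fint_{B_r(x)}\big(|u(x)-u_{B_{2r}(z)}|+|u(y)-u_{B_{2r}(z)}|\big)\,d\mu(z).
\]
The remaining steps are as follows. Doubling gives $\mu(B_r(x))^{-1}\le C_X^2\,\mu(B_{2r}(z))^{-1}$. For the $u(y)$-term, first enlarge $B_r(x)\subseteq B_{2r}(y)$ so the $z$-integral depends only on $y$. Tonelli then moves the $z$-integral outside, and the Poincar\'e inequality is applied on $B_{2r}(z)$. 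One concludes with
\[
\int_X \fint_{B_R(z)}\rho\,d\mu\,d\mu(z)\le C_X\int_X\rho\,d\mu\qquad\text{for every } R>0,
\]
which does not see $R=2\lambda r$. This gives the per-scale bound with constant $C_X^3$.

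Swapping the centre variable via doubling is the key device in the paper's estimate \eqref{eqp3}. The paper combines it with a chaining argument along the balls $B_{r_k}(x)$ and $B_{r_k}(y)$, which uses $u_{B_k}\to u(x)$ and hence continuity of $u$. Your single-scale comparison with a common ball, once repaired this way, avoids chaining and works for any $u\in L^1_{\mathrm{loc}}$.

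Two minor points: $y\in A_{n,x}$ if and only if $x\in A_{n,y}$, so no reindexing is needed; and truncating $u_j$ is unnecessary for the Fatou step.
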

\begin{proof}
Let us show that we can assume that the right-hand side of \eqref{eq:p} is positive and finite without loss of generality. If it is infinite then the inequality holds trivially. Similarly, if it is zero then either $\varphi=0$ almost everywhere or $\pp(\Omega)=0$. If $\varphi=0$, again the inequality holds trivially. If $\pp(\Omega)=0$, then it follows from the definition that for every ball $B$, 
\[\fint_B |1_\Omega-(1_\Omega)_B|= 2\frac{\mu(\Omega\cap B)\mu(\Omega^c\cap B)}{\mu(B)^2}=0\,.\]
A straightfoward computation now shows that $\mu(\Omega)=0$ or $\Omega=X$ and in both cases \eqref{eq:p} trivially holds.

For $\varepsilon>0$ let $(u_j,\rho_j)$, $j\in\N$, be Poincar\'e pairs such that $u_j$ is locally Lipschitz, $u_j\to 1_\Omega$ in $L^1_{\text{loc}}(X)$ and 
\begin{align}
\label{eq_uppgrad}
    \lim_{j\to\infty}\int_X \rho_j \, d\mu\le \pp(\Omega)+\varepsilon\,.
\end{align}
We also assume that $0\le u_j \le 1$ for every $j\in \N$, by truncating them if necessary and replacing $\rho_j$ by $2\rho_j$ (note that his may add an extra factor of 2 in \eqref{eq_uppgrad}, but this does not affect the argument). 
Indeed, for any contractive $f:\R\to\R$ and any $B$ of finite measure,
\[\fint_B |f\circ u-(f\circ u)_B|\le \fint_B \fint_B |f\circ u(x)-f\circ u(y)|\le \fint_B \fint_B |u(x)- u(y)|\le 2  \fint_B | u- u_B|\, . \]
It is easy to check that under these assumptions,
\[\int_{\Omega}\int_{X} ((u_j(x)-u_j(y)) \varphi(x,y) \, d\mu(y)d\mu(x)\xrightarrow{j\to\infty}\int_{\Omega}\int_{\Omega^c} \varphi(x,y) \, d\mu(y)d\mu(x)\,.
\]
So, it remains to show that for a Poincar\'e pair $(u,\rho)$ with $u$ continuous,
\begin{align}\label{eqp0}
\int_{X}\int_{X} |u(x)-u(y)| \varphi(x,y) \, d\mu(y)d\mu(x)
\lesssim C_X^4  \int_{X} \rho \,d\mu \sum_{n\ge0} \sup_{x\in X} \int_{A_{n,x}} (1+\mathrm{d}(x,y))\varphi(x,y)  \, d\mu(y)\,.
\end{align}
Let us estimate $|u(x)-u(y)|$ by a chaining argument as in the proof of \cite[Lemma~5.15]{HK}. For $k\ge 0$, define $r_k=2^{1-k}(1+\mathrm{d}(x,y))$, $B_k=B_{r_{k+1}}(x)$ and $B_k'=B_{r_{k}}(y)$. By construction $B_0\subseteq B_0'\subseteq B_{2r_0}(x)$, $B_{k+1}\subseteq B_k$ and $B_{k+1}'\subseteq B_k'$.
From Condition~\ref{doubling},
\begin{align*}
  |u_{B_0}-u_{B_0'}|&\leq \fint_{B_0}|u(w)-u_{B_0'}|d\mu(w)  \leq \frac{\mu(B_0')}{\mu(B_0)} \fint_{B_0'}|u(w)-u_{B_0'}|d\mu(w)
  \\
  &\leq C_X^2  \fint_{B_0'}|u(w)-u_{B_0'}|d\mu(w)\,.
\end{align*}
Using similar arguments to bound $|u_{B_k}-u_{B_{k+1}}|$ and  $|u_{B_k'}-u_{B_{k+1}'}|$ then shows
\begin{align}\label{eqp2}
    |u(x)-u(y)|&\le |u_{B_0}-u_{B_0'}| + \sum_{k=0}^\infty (|u_{B_k}-u_{B_{k+1}}|+|u_{B_k'}-u_{B_{k+1}'}|)
    \\ &\begin{multlined}[t]
        \le C_X^2 \fint_{B_0'} |u(w)-u_{B_0'}|\, d\mu(w)\notag
     \\+ \sum_{k=0}^\infty \Big(\fint_{B_{k}}|u(w)-u_{B_{k}}|\, d\mu(w) +\fint_{B_{k}'}|u(w)-u_{B_{k}'}|\, d\mu(w)\Big)\notag
    \end{multlined}
    \\ & \lesssim C_X^2 (1+\mathrm{d}(x,y)) \sum_{k=0}^\infty 2^{-k}\Big(\fint_{B_{\lambda r_{k}}(x)}\rho(w)\, d\mu(w) + 
     \fint_{B_{\lambda r_k}(y)}\rho(w)\, d\mu(w)\Big)\,.\notag
\end{align}
In order to proceed, we need to deal with the fact that $r_k$ depends on $x,y$. Notice that if $y\in A_{n,x}$ for some $n\ge 0$, then 
\[2^{1-k}(1+2^{n-1})\le r_k\le 2^{1-k}(1+2^n)\,.\]
In this case, by the doubling assumption \ref{doubling}, for every $k\ge 0$,
\[\frac{1_{B_{\lambda r_k}(x)}(w)}{\mu(B_{\lambda r_k}(x))} \le C_X^2\frac{1_{B_{\lambda 2^{1-k}(1+2^n)}(w)}(x)}{\mu(B_{\lambda 2^{1-k}(1+2^n)}(w))}\,.\]
From this and letting $\psi(x,y)=\varphi(x,y)(1+\mathrm{d}(x,y))$, we see that
\begin{align}\label{eqp3}
\int_X \int_{A_{n,x}} \fint_{B_{\lambda r_{k}}(x)}&\rho(w)  \psi(x,y) \, d\mu(w)d\mu(y)d\mu(x) 
\\ &\le C_X^2 \int_X \int_{A_{n,x}} \int_{X} \frac{1_{B_{\lambda 2^{1-k}(1+2^n)}(w)}(x)}{\mu(B_{\lambda 2^{1-k}(1+2^n)}(w))} \rho(w)  \psi(x,y) \, d\mu(w)d\mu(y)d\mu(x) \notag
\\ &\le C_X^2 \sup_{x\in X} \int_{A_{n,x}}  \psi(x,y) \, d\mu(y) \int_X  \rho(w) \, d\mu(w) \,.\notag
\end{align}
Analogously,
\begin{align*}
\int_X \int_{A_{n,y}} \fint_{B_{\lambda r_{k}}(y)}&\rho(w)  \psi(x,y) \, d\mu(w)d\mu(x)d\mu(y) 
\le C_X^2 \sup_{y\in X} \int_{A_{n,y}}  \psi(x,y) \, d\mu(x) \int_X  \rho \, d\mu(w) \,.
\end{align*}
Joining this with \eqref{eqp2} and \eqref{eqp3} we get \eqref{eqp0}.
\end{proof}

\section{Discrete perimeter and grid approximations}\label{sec_disper}

\subsection{Perimeter in discrete grids}
One of our goals is to obtain spectral deviation estimates that are compatible with the discretization of $\R^d$ and corresponding approximation schemes. To investigate such approximations, let $\Lambda=A\Z^d\subseteq\R^d$, with $A\in \gl_d(\R)$, be a lattice and endow it with the Euclidean metric and normalized counting measure 
\begin{align}\label{eq_ncm}
\mu_\Lambda(\{\lambda\}) = |\Lambda|:=|\det A|.
\end{align}
Let also $0<\|A^{-1}\|^{-1}=\sigma_1(A)\le\ldots\le \sigma_d(A)=\|A\|$ be the singular values of $A$ and $\vr=\|A\|\|A^{-1}\|$ its \textit{condition number}. Note that if $\|A\|\to 0$, then
\begin{align}\label{eq_to}
d\mu_{\Lambda} \to dx
\mbox{ vaguely}.
\end{align}
We shall be interested in approximation schemes where, in addition to \eqref{eq_to}, the eccentricity of the grid remains bounded, as formalized in the following definition. 
\begin{definition}
The \emph{isotropic fineness measure} of a full-rank lattice $\Lambda = A \Z^d\subseteq\R^d$ is
\begin{align}\label{eq_isof}
\isof_\Lambda := \vr^{2d} \max\{1,\|A\|^{d}\}
= \|{A}\|^{2d} \|A^{-1}\|^{2d}
\max\{1,\|A\|^{d}\},
\end{align}
where $A \in \gl_d(\R)$.
\end{definition}
\begin{remark}[Stability under isotropic contractions]\label{rem_isof}
For a full-rank lattice $\Lambda= A \Z^d$, 
\[  \isof_{\varepsilon\Lambda} = {\color{purple}}
\isof_{\|A\|^{-1}\Lambda}=\vr^{2d}, \qquad 0<\varepsilon\le \|A\|^{-1}.\]
\end{remark}
More generally, we will consider  measures of the form
\begin{equation}\label{eq:weighted_measure}
    \mu_\omega:= \sum_{\lambda\in\Lambda}\omega_\lambda
 \delta_\lambda
\end{equation}
for some positive weights $\omega_\lambda>0$, and define $\vartheta_\omega:=\inf_{\lambda\in\Lambda }\omega_\lambda$ and $\Theta_\omega:=\sup_{\lambda\in\Lambda }\omega_\lambda$. Notice that
\begin{equation}\label{eq:comp-meas}
\vartheta_\omega \,\mu_\Lambda\leq |\Lambda| \,\mu_\omega\leq \Theta_\omega \,\mu_\Lambda.
\end{equation}
Motivated by \eqref{eq_to}, we consider the following notion of lattice boundary.
\begin{definition}[Boundary with respect to a lattice]
Let $\Omega \subseteq A \mathbb{Z}^d$ with $A\in \gl_d(\R)$ be finite. The \emph{discrete boundary} of $\Omega$ associated to $A$ is  \[\partial_A\Omega=\{x\in \Omega:\ \mathrm{d}(A^{-1}x,A^{-1}\Omega^c)=1\}=A\partial_{\id}A^{-1}\Omega \,,\]
where $\mathrm{d}(x,y)$ is the Euclidean distance.
\end{definition}
\begin{remark}[Thickening a subset of a lattice]\label{rem_th}
If $\Omega \subset \Lambda = A \mathbb{Z}^d$, then the perimeter of the set $\widetilde\Omega = \bigcup_{x\in\Omega} A[-\tfrac{1}{2},\tfrac{1}{2}]^d+x\subseteq \R^d$  satisfies
\begin{align}\label{eq_dlb}
\|A\|^{-1}\mu_{\Lambda}(\partial_{A}\Omega) \lesssim \mathcal{H}(\partial \widetilde\Omega) \lesssim \vr \,\|A\|^{-1}\mu_{\Lambda}(\partial_{A}\Omega)\,,
\end{align}
where $\mathcal{H}$ is the $(d-1)$-dimensional Hausdorff measure.

Indeed, from the area formula it follows that
\[\HH\big(\partial (A[-\tfrac{1}{2},\tfrac{1}{2}]^d)\big)=2\sum_{j=1}^d\sqrt{\det(A(j)^tA(j))}\,,\]
where $A(j)\in \R^{(d-1)\times d}$ is obtained by deleting the $j$-th column of $A$. By Cauchy-Binet,
\[\det(A(j)^tA(j))=\sum_{i=1}^d C_{ij}^2,\]
where $C$ is the cofactor matrix of $A$.
Since $C=\det(A) A^{-t}$, \eqref{eq_dlb} now follows from a straightforward computation using that
\[\HH\big(\partial (A[-\tfrac{1}{2},\tfrac{1}{2}]^d)\big)=2\sum_{j=1}^d\sqrt{\sum_{i=1}^d C_{ij}^2}\simeq \|C\|_F=\sqrt{\sum_{j=1}^d \prod_{i\neq j}\sigma_i(A)^2}\,,\]
 where $\|\cdot\|_F$ is the Frobenius norm.
\end{remark}
According to \eqref{eq_dlb}, 
\begin{align}\label{eq_dlb2}
{\|A\|}^{-1}\mu_{\Lambda}(\partial_{A}\Omega)
\end{align}
is a reasonable notion of lattice perimeter, which is compatible with regular approximation by grids --- see also Section~\ref{sec_stab}. We now compare \eqref{eq_dlb2} to $\pp(\Omega)$, as defined in \eqref{eq_per} with respect to the ambient space $\Lambda$. 

\begin{proposition}
    \label{lemdisper}
    Consider 
    $\Lambda=A\mathbb{Z}^d$ with $A\in \gl_d(\R)$ endowed with the Euclidean distance 
    $\mathrm{d}$ and a weighted measure $\mu_\omega$, cf. \eqref{eq:weighted_measure}. Then, for every finite set $\Omega\subseteq \Lambda$:
    \[
    {\left(\frac{\vartheta_\omega}{\Theta_\omega}\right)^5}
    \|A\|^{-1}\mu_\omega(\partial_A\Omega)\lesssim \pp(\Omega)\lesssim \left(\frac{\Theta_\omega}{\vartheta_\omega}\right)^3 \vr^{2d}\|A\|^{-1}\mu_\omega(\partial_A\Omega)
    \,,\]
    where the implied constants depend only on the dimension $d$. 
    In particular, for $\mu_\omega=\mu_\Lambda$,
    \[
    \|A\|^{-1}\mu_\Lambda(\partial_A\Omega)\lesssim \pp(\Omega)\lesssim \vr^{2d}\|A\|^{-1}\mu_\Lambda(\partial_A\Omega)
    \,.\]
\end{proposition}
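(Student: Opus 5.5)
The plan is to prove the two inequalities separately, first for the unweighted measure $\mu_\Lambda$ and then transfer to $\mu_\omega$ via \eqref{eq:comp-meas}. Averages over balls change by at most a factor $\Theta_\omega/\vartheta_\omega$ per occurrence, and the Poincar\'e inequality involves a bounded number of such averages. This explains the powers $(\Theta_\omega/\vartheta_\omega)^3$ and $(\vartheta_\omega/\Theta_\omega)^5$; I will simply track them along the way. Since $\Lambda$ is discrete, every function on it is locally Lipschitz and $L^1_{\mathrm{loc}}$ convergence is pointwise convergence. So for the upper bound it suffices to exhibit a single Poincar\'e pair $(1_\Omega,\rho)$ with $u=1_\Omega$.

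\emph{Upper bound.} I take $\rho = c\,\|A\|^{-1}\vr^{2d}\,1_{\partial_A\Omega\cup\partial_A(\Omega^c)}$, which charges both inner and outer discrete boundary points. The mass of the outer boundary is comparable to that of the inner one, up to a factor $3^d$, because each outer point has a lattice neighbour in $\partial_A\Omega$. I then verify the Poincar\'e inequality on every ball $B=B_r(x)\cap\Lambda$, splitting into two scales.

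For $r<\sigma_1(A)/2$ the ball contains a single point, so the left-hand side vanishes.

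For $r\ge \sigma_1(A)/2$, if $B$ meets both $\Omega$ and $\Omega^c$, I join a point of $\Omega\cap B$ to a point of $\Omega^c\cap B$ by a path of $\Lambda$-neighbours (steps $\pm Ae_i$) staying inside $B_{\lambda r}(x)$. Such a path exists because the segment between the two points lies in $B$, and following $A\mathbb{Z}^d$ coordinates costs a distortion controlled by $\vr$; this dictates the choice $\lambda\simeq\vr$. The path must cross the boundary, so $\rho$ charges at least one point of $B_{\lambda r}(x)$. We have $\fint_B|u-u_B|\le 1$, and $\mu_\Lambda(B_{\lambda r})\lesssim (\lambda r)^d/\sigma_1^d\cdot|\det A|$ while $r\gtrsim\sigma_1$. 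The needed inequality $1\lesssim r\,\|A\|^{-1}\vr^{2d}|\det A|/\mu_\Lambda(B_{\lambda r})$ then reduces to volume counting. This counting is where the $\vr^{2d}$ appears. I expect this step to be the main obstacle: the lattice geometry must be handled uniformly in $r$, especially at large $r$, where only one boundary point is guaranteed. At large $r$ I would instead use the relative isoperimetric inequality on $\mathbb{Z}^d$ transported by $A$: $\min(\#\Omega\cap B,\#\Omega^c\cap B)^{(d-1)/d}\lesssim \#(\partial\text{ in }B_{\lambda r})$, combined with $\fint|u-u_B|\le 2\min(\mu(\Omega\cap B),\mu(\Omega^c\cap B))/\mu(B)$.

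\emph{Lower bound.} Let $(u_j,\rho_j)$ be Poincar\'e pairs with $u_j\to1_\Omega$ pointwise. Fix $y\in\partial_A\Omega$ and a neighbour $y'\in\Omega^c$ with $|y-y'|\le\|A\|$. Apply the Poincar\'e inequality on $B_y=B_{\|A\|}(y)$, which contains both points. For large $j$, $u_j\approx1$ at $y$ and $u_j\approx 0$ at $y'$, so $\fint_{B_y}|u_j-(u_j)_{B_y}|\gtrsim \mu(\{y\})/\mu(B_y)\gtrsim \vr^{-d}$ in the unweighted case. This gives $\int_{B_{\lambda\|A\|}(y)}\rho_j\gtrsim \|A\|^{-1}\mu(B_y)\vr^{-d}\gtrsim\|A\|^{-1}|\Lambda|$, the last step because $B_y$ contains at least $y$ and $y'$. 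Summing over $y\in\partial_A\Omega$, the balls $B_{\lambda\|A\|}(y)$ overlap boundedly. A naive count gives multiplicity depending on $\lambda$ and $\vr$, but using instead a maximal $\lambda\|A\|$-separated subset of $\partial_A\Omega$ and the fact that the claimed lower bound has no $\vr$ factor, I would need to refine the choice of ball. I would choose the smallest radius making $B$ contain exactly $y,y'$ and few points, so that $\mu(B_y)\simeq|\Lambda|$. Doubling-type overlap counting then yields $\int\rho_j\gtrsim\|A\|^{-1}\mu_\Lambda(\partial_A\Omega)$. Taking $\liminf$ and the infimum gives the bound, and the weights enter through the two ball averages and the overlap count.
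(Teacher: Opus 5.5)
Your upper bound follows the paper's route: a discrete isoperimetric inequality on cubes of $\mathbb{Z}^d$, transported by $A$, with $\lambda\simeq\vr$ to pass between balls and parallelepipeds. The paper uses the Bobkov--G\"otze inequality to get $\nu(A^{-1}\Omega)\nu(A^{-1}\Omega^c)\lesssim dk\,\nu(\partial_{\id}A^{-1}\Omega)$ on cubes of side $k$. That inequality already covers every scale $r\ge\sigma_1(A)$, so your path argument is unnecessary. The relative isoperimetric inequality you invoke also suffices, since $\min(\#\Omega\cap Q,\#\Omega^c\cap Q)\le k^d$, but you would need to cite it (e.g.\ an edge-isoperimetric inequality in the grid).

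\textbf{The lower bound has a genuine gap.} Each Poincar\'e pair in the definition of $\pp(\Omega)$ carries its own, arbitrarily large, constant $\lambda$, so the estimate must be uniform in $\lambda$.
\begin{itemize}
\item Your per-point bound $\int_{B_{\lambda r}(y)}\rho_j\gtrsim r^{-1}|\Lambda|$ (with $r\simeq\|A\|$) is correct. However, summing it over $y\in\partial_A\Omega$ costs the overlap multiplicity of the balls $B_{\lambda r}(y)$. This is of order $\#(\partial_A\Omega\cap B_{2\lambda r})$ and is unbounded in $\lambda$. Passing to a $\lambda r$-separated subfamily loses the same factor.
\item If you instead keep the averages, $\mu(\{y\})\fint_{B_r(y)}|u-u_{B_r(y)}|\le r\,\mu(\{y\})\fint_{B_{\lambda r}(y)}\rho$, and use $\sum_y\mu(\{y\})\fint_{B_R(y)}\rho\le C_X\int\rho$, the $\lambda$-dependence disappears. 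But you then only get $\int\rho\gtrsim \frac{|\Lambda|}{r\,\mu(B_r(y))}\mu(\partial_A\Omega)$, losing a factor $\#(B_r(y)\cap\Lambda)$.
\item Your proposed cure, a ball around $y$ containing $y'$ with $\mu(B_y)\simeq|\Lambda|$, does not exist in general, because balls must be centred at lattice points. Take $A=\mathrm{diag}(\vr^{-1},1,\dots,1)$ and $y'=y+Ae_d$. Every ball of $\Lambda$ containing $y$ and $y'$ has radius $\ge 1$, hence contains $\gtrsim\vr$ points. If $\Omega^c\cap B_y=\{y'\}$ (a one-point hole), the oscillation $\fint_{B_y}|1_\Omega-(1_\Omega)_{B_y}|$ is only $O(\vr^{-1})$.
\end{itemize}
So no single-scale estimate at $y$ gives the bound that is free of both $\vr$ and $\lambda$.

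\textbf{The missing idea is the multiscale chaining of Lemma~\ref{lem:p}.} The paper applies it with $\varphi(x,y)=1_{\{|A^{-1}(x-y)|=1\}}$ after rescaling to $\|A\|=1$.
\begin{itemize}
\item For a neighbouring pair $x\in\Omega$, $y\in\Omega^c$, one telescopes $u(x)-u(y)$ along dyadic balls of radii $2^{1-k}(1+|x-y|)$ shrinking to $x$ and to $y$, down to singletons. This recovers the full jump $|u_j(x)-u_j(y)|\to 1$ rather than a diluted ball oscillation.
\item Each link costs only the doubling constant, which is $\le 5^d\Theta_\omega/\vartheta_\omega$ uniformly in $A$.
\item The Fubini/doubling swap bounds the contribution of each scale by $\int\rho$, independently of $\lambda$.
\end{itemize}
This yields $\mu_\omega(\partial_A\Omega)\lesssim(\Theta_\omega/\vartheta_\omega)^5\,\pp(\Omega)$ when $\|A\|=1$, and scaling gives the general case.
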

To prove Proposition~\ref{lemdisper}, we will need the following particular case of \cite[Theorem 3.2]{BoGo}.

\begin{theorem}[{\cite[Theorem 3.2]{BoGo}}]
    \label{theoiso}
    Let $\mu=\mu_1\times\ldots\times\mu_d$ be a product of probability measures. Then, for any measurable set $\Omega$,
    \[I(\mu(\Omega))\le \sqrt{2} \ee\left[\Big(\sum_{j=1}^d\vv_j(1_\Omega)\Big)^{1/2}\right]\,,\]
    where $I=F'\circ F^{-1}$ with $F$ the cumulative distribution of a standard normal distribution and $\vv_j$ is the variance with respect to the $j$-th variable.
\end{theorem}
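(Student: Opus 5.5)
The plan is to prove a stronger functional inequality and then specialize it to indicators. For $f$ on the product space with values in $[0,1]$, I would show
\[
I(\ee f)\le \ee\Big(I(f)^2+2\sum_{j=1}^d \vv_j(f)\Big)^{1/2}. \tag{$\ast$}
\]
Here $\vv_j(f)$ is the variance of $f$ in the $j$-th variable with the others frozen. Since $I(0)=I(1)=0$, taking $f=1_\Omega$ in $(\ast)$ gives exactly the claimed bound. The only property of $I=F'\circ F^{-1}$ I intend to use is the ODE $I\,I''=-1$ on $(0,1)$, together with $I(0)=I(1)=0$.

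\textbf{One-dimensional step ($d=1$).} Fix $m=\ee f$. The first target is a pointwise estimate of the form
\[
\big(I(x)^2+2(x-m)^2\big)^{1/2}\ \ge\ I(m)+I'(m)(x-m) - (\text{a correction that averages out}).
\]
The source is a Taylor expansion of $g=I^2$: from $I\,I''=-1$ we get $g''=2I'^2-2$. Hence $I(x)^2+2(x-m)^2$ dominates $\big(I(m)+I'(m)(x-m)\big)^2$ up to controllable terms. Taking expectations would then kill the linear term and leave $I(m)\le \ee\,(I(f)^2+2(f-m)^2)^{1/2}$.

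I expect this to be the main obstacle. The statement has the constant $\ee(f-m)^2=\vv(f)$ inside the square root, not the random quantity $(f-m)^2$. Jensen's inequality goes the wrong way for replacing one by the other, so the pointwise argument alone does not close. My first attempt would be Bobkov's two-point route. I would prove the two-point inequality
\[
I\big(\tfrac{a+b}{2}\big)\le \tfrac12\Big(\sqrt{I(a)^2+\tfrac{|a-b|^2}{4}}+\sqrt{I(b)^2+\tfrac{|a-b|^2}{4}}\Big)
\]
directly from $I\,I''=-1$. I would then pass to a general $\mu$ by writing $\vv(f)=\tfrac12\ee\ee'(f-f')^2$ with an independent copy $f'$, and symmetrizing. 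The factor $\sqrt2$ in the statement is the room this symmetrization consumes.

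\textbf{Tensorization.} I would argue by induction on $d$. Assume $(\ast)$ holds for $d-1$ factors, and write $f=f(x',x_d)$ with $x'=(x_1,\dots,x_{d-1})$. For each fixed $x_d$, the induction hypothesis gives
\[
I(\ee' f)\le \ee'\Big(I(f)^2+2\sum_{j<d}\vv_j(f)\Big)^{1/2}.
\]
Apply the one-dimensional case to $h(x_d)=\ee' f(\cdot,x_d)$. The result bounds $I(\ee f)$ by $\ee_d\big(I(h)^2+2\vv_d(h)\big)^{1/2}$.

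To combine the two bounds, I would use two facts. First, the map $(u,v)\mapsto \sqrt{u^2+|v|^2}$ is a norm on $\R\times\R^{d}$, so Minkowski's inequality $\|\ee' V\|\le \ee'\|V\|$ holds for vector-valued $V$. Second, Jensen's inequality applied to the variance gives $\vv_d(\ee' f)^{1/2}\le \ee'\,\vv_d(f)^{1/2}$. I would apply Minkowski with $V=\big(\sqrt{I(f)^2+2\sum_{j<d}\vv_j f},\ \sqrt{2\vv_d f}\big)$. Together these show that $\big(I(h)^2+2\vv_d(h)\big)^{1/2}$ is bounded by $\ee'\big(I(f)^2+2\sum_{j\le d}\vv_j(f)\big)^{1/2}$. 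Integrating in $x_d$ completes the induction.
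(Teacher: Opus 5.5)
The paper does not prove this statement. It is quoted from Bobkov--G\"otze and used only for uniform measures on discrete cubes in Proposition~\ref{lemdisper}, so your proposal has to stand on its own. Your reduction to $(\ast)$ is correct, and so is your tensorization: Minkowski for the Euclidean norm, plus $\vv_d(\ee' f)^{1/2}\le \ee'\vv_d(f)^{1/2}$, which is the triangle inequality for the seminorm $g\mapsto\|g-\ee_d g\|_{L^2(\mu_d)}$.

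\textbf{The gap.} The one-dimensional case of $(\ast)$ for an \emph{arbitrary} probability measure is the real content of the theorem, and you leave it open. The symmetrization you sketch does not deliver it.
\begin{itemize}
\item Averaging the two-point inequality over $a=f(x)$, $b=f(x')$ puts $\ee\, I\big(\tfrac{f+f'}{2}\big)$ on the left. By concavity this is $\le I(m)$, so you learn nothing about $I(m)$.
\item Iterating over dyadic averages of $2^k$ independent copies, and chaining with Minkowski as in your tensorization, does reach $I(m)$ as $k\to\infty$. But the correction accumulated along a branch adds up, conditionally on the leaf value, to $\tfrac13(f-m)^2+\tfrac23\vv(f)$. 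The pointwise deviation you wanted to avoid comes back.
\item After tensorization this mixed bound is genuinely weaker than the theorem. Take $\Omega=\{0,1\}^d\setminus\{0\}$ with $\mu_j(\{1\})=\lambda/d$ and let $d\to\infty$. The resulting right-hand side $\ee\big(\sum_j[\tfrac13(1_\Omega-\ee_j 1_\Omega)^2+\tfrac23\vv_j(1_\Omega)]\big)^{1/2}$ is of order $\lambda e^{-\lambda}$. By contrast, $I(\mu(\Omega))$ and the theorem's right-hand side are both $\approx\sqrt{2\lambda}\,e^{-\lambda}$.
\end{itemize}
So the remark that the $\sqrt2$ is ``the room symmetrization consumes'' is unsupported.

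\textbf{How to close it.} The missing idea is that $v=\vv(f)$ is a constant, so Jensen goes the right way in the variable $I(f)$. The map $u\mapsto\sqrt{u^2+2v}$ is convex, hence $\ee\sqrt{I(f)^2+2v}\ge\sqrt{(\ee I(f))^2+2v}$. It therefore suffices to prove the scalar inequality $I(m)^2-(\ee I(f))^2\le 2\vv(f)$.

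Set $\Delta(x)=I(m)+I'(m)(x-m)-I(x)$. Then $0\le I(m)-\ee I(f)=\ee\,\Delta(f)$ and $I(m)+\ee I(f)\le 2I(m)$. So the scalar inequality follows from the pointwise bound $I(m)\Delta(x)\le (x-m)^2$ for $x\in[0,1]$.

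To prove the pointwise bound, let $g(x)=(x-m)^2-I(m)\Delta(x)$.
\begin{itemize}
\item $g(m)=g'(m)=0$, and $II''=-1$ gives $g''(x)=2-I(m)/I(x)$.
\item Hence $g$ is convex on the interval $\{I\ge I(m)/2\}$, which contains $m$, so $g\ge 0$ there.
\item On the two remaining end pieces $g$ is concave, so it suffices to check $g(0)\ge0$ and $g(1)\ge 0$.
\item Using $I(x)=I(1-x)$, both endpoint conditions reduce to $k(m):=m^2-I(m)^2+mI(m)I'(m)\ge0$ on $(0,1)$.
\item This holds because $k(0^+)=0$ and $k'=m(1+I'^2)-II'\ge0$. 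The derivative bound is trivial for $m\ge\tfrac12$. For $m<\tfrac12$, with $t=-F^{-1}(m)$, it is the classical bound $1-F(t)\ge\tfrac{t}{1+t^2}F'(t)$.
\end{itemize}

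With this one-dimensional lemma, your tensorization completes the proof, and the two-point inequality is not needed.
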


It is easy to check that $I$ is concave, $I(0)=I(1)=0$ and $I(1/2)=1/\sqrt{2\pi}$. In particular for $0\le x\le 1$,
\begin{align}
    \label{eqI}
    x(1-x)\le \tfrac{1}{2}-\big|x-\tfrac{1}{2}\big|\le \tfrac{\sqrt{2\pi}}{2}I(x)\,.
\end{align}

\begin{proof}[Proof of Proposition~\ref{lemdisper}] 
If $\vartheta_\omega = 0$, the claim holds trivially; therefore, we can assume $\vartheta_\omega >0$.
By a rescaling argument, we can assume without loss of generality that $\|A\|=1$. Indeed, given $\uptau>0$, $(u,\rho)$ is a Poincar\'e pair for the space $(\Lambda,\mu_\omega)$ if and only if  $(u(\tfrac{\cdot}{\uptau}),\tfrac{1}{\uptau}\rho(\tfrac{\cdot}{\uptau}))$ is a pair for $(\uptau\Lambda,\mu_{\uptau\omega})$. From this, the definition of perimeter and making explicit the dependence $\pp(E)=\pp_{\mu}(E)$ on the measure $\mu$, we see that for $\Omega\in\Lambda$,
\[\pp_{\mu_{\uptau\omega}}(\uptau\Omega)=\uptau^{d-1}\pp_{\mu_\omega}(\Omega)\,.\]
Also, we have that
\[\|\uptau A\|^{-1}\mu_{\uptau\omega}(\partial_{\uptau A}(\uptau\Omega))=(\uptau\|A\|)^{-1}\mu_{\uptau \omega}(\uptau\partial_A\Omega)=\uptau^{d-1}\|A\|^{-1}\mu_\omega(\partial_A\Omega)\,.\]
We start by showing that $\pp(\Omega)\lesssim (\frac{\Theta_\omega}{\vartheta_\omega})^3\vr^{2d} \mu_\omega(\partial_A\Omega)$. Since $1_\Omega$ is integrable and Lipschitz (we work on a uniformly discrete space), it suffices to show that 
    \begin{align}
        \fint_{B_r(x)} |1_\Omega-(1_\Omega)_{B_r(x)}| \,d\mu_\omega \lesssim  r \left(\frac{\Theta_\omega}{\vartheta_\omega}\right)^3 \vr^{2d} \fint_{B_{\lambda r}(x)} 1_{\partial_A \Omega}\, d\mu_\omega \,,\notag
        \intertext{or, equivalently,}
        \frac{\mu_\omega(\Omega\cap B_r(x))\mu_\omega(\Omega^c\cap B_r(x))}{\mu_\omega(B_r(x))^2}\lesssim  r{\left(\frac{\Theta_\omega}{\vartheta_\omega}\right)^3}\vr^{2d} \frac{\mu_\omega(\partial_A\Omega\cap B_{\lambda r}(x))}{\mu_\omega( B_{\lambda r}(x))}\,,\label{eqball}
    \end{align}
  for some suitable $\lambda\ge 1$ and any ball $B_r(x)$ with $r \ge \sigma_1(A)= \vr^{-1}$ (otherwise the left-hand side is trivially 0).

    In order to apply Theorem~\ref{theoiso}, we first work with cubes in $\Z^d$ whose images under $A$ correspond to parallelepipeds rather than Euclidean balls. Consider $\nu$ the uniform probability distribution over $Q=u+\{1,\ldots,k\}^d$ where $u\in\Z^d$, $k\in\N$. By \eqref{eqI} and Theorem~\ref{theoiso},
    \begin{align}
    \label{eqper}
        \nu(A^{-1}\Omega)\nu(A^{-1}\Omega^c)\le \sqrt{\pi} \ee\left[\Big(\sum_{j=1}^d\vv_j(1_{A^{-1}\Omega})\Big)^{1/2}\right]
        \le \sqrt{\pi} \sum_{j=1}^d\ee\left[\sqrt{\vv_j(1_{A^{-1}\Omega})}\right]\,.
    \end{align}
Assume $j=1$ for notational convenience and let us estimate $\vv_1(1_{A^{-1}\Omega})(x^{(1)})$, where we take the variance over the first variable $x_1$ and leave $x^{(1)}=(x_2,\ldots,x_k)$ fixed. Denoting the $x^{(1)}$-section of a set $M\subseteq\Z^d$ by $M_{x^{(1)}}=\{x_1\in\Z: \ (x_1,x^{(1)})\in M\}$ and the expected value with respect to the first variable by $E_1$, we get
\begin{multline*}
    \vv_1(1_{A^{-1}\Omega})=
\ee_1(1_{A^{-1}\Omega})-\ee_1^2(1_{A^{-1}\Omega})
\\ \le \begin{cases}
    0 & \text{if } (A^{-1}\Omega)_{x^{(1)}}\cap (u_1+\{1,\ldots,k\})=\emptyset  \text{ or }  u_1+\{1,\ldots,k\}
    \\ \tfrac{1}{4}  & \text{otherwise}
\end{cases}\,,
\end{multline*}
where we used that $t-t^2\le\tfrac{1}{4}$ for $t\in[0,1]$. In particular,
\[\vv_1(1_\Omega)\le \frac{(\#(\partial_{\id} A^{-1}\Omega\cap Q)_{x^ {(1)}})^2}{4}  \,.\]
Taking the expectation of the square root we arrive at
\begin{align*}
    \ee\Big[\sqrt{\vv_1(1_{A^{-1}\Omega})}\Big]&\le \tfrac{1}{2} \ee\big[\#(\partial_{\id} A^{-1}\Omega\cap Q)_{x^{(1)}}\big]=\tfrac{1}{2} \ee\left[\sum_{x_1=u_1+1}^{u_1+k}1_{\partial_{\id} A^{-1}\Omega}(x_1,x^{(1)})\right]
    \\&=\frac{k}{2} \nu(\partial_{\id}A^{-1}\Omega)=\frac{k}{2} \,\frac{\mu_\Lambda(\partial_{A}\Omega\cap AQ)}{\mu_\Lambda(AQ)} \,.
\end{align*}
Naturally, the same works for any $j$, so plugging this into \eqref{eqper} we obtain
\begin{align}
\label{eq_poin_disc}
    \frac{\mu_\Lambda(\Omega\cap AQ)\mu_\Lambda(\Omega^c\cap AQ)}{\mu_\Lambda(AQ)^2}=\nu(A^{-1}\Omega)\nu(A^{-1}\Omega^c)\le\frac{\sqrt{\pi} d k}{2}  \, \frac{\mu_\Lambda(\partial_A\Omega\cap AQ)}{\mu_\Lambda(AQ)}\,.
\end{align}
To obtain \eqref{eqball} from \eqref{eq_poin_disc} we need to go back from parallelepipeds to balls. Let us first check that  $(\Lambda,\mu_\Lambda,\dd)$ is doubling with a doubling constant $\le 5^{d}$. To see this, fix $x\in\Lambda$ and $r>0$. Cover $B_{2r}(x)\cap\Lambda$ with balls $B_r(x_j)\cap\Lambda$, where $1\le j\le J$, $x_j\in B_{2r}(x)\cap\Lambda$ and $\dd(x_j,x_i)\ge r$ for $j\neq i$ (here $B_s(x)\subseteq \R^{d}$ denotes the Euclidean ball in $\R^{d}$, not the discrete ball in $\Lambda$).
Notice that the balls $B_{r/2}(x_j)$ are disjoint and included in $B_{5r/2}(x)$. So, 
\[J|B_{1}(0)|\big(\tfrac{r}{2}\big)^{d}\le |B_{1}(0)|\big(\tfrac{5r}{2}\big)^{d}\,.\]
In particular,
\begin{align}
    \label{eq:countingdoubling}
    \#(B_{2r}(x)\cap\Lambda)\le J\,\#(B_{r}(x)\cap\Lambda)\le 5^{d}\,\#(B_{r}(x)\cap\Lambda)\,.
\end{align}
From this, we see that $\mu_\Lambda$ satisfies \ref{doubling} with constant $\le 5^d$.
Inequality \eqref{eqball} for $\mu_\Lambda$ now follows from \eqref{eq_poin_disc} by choosing $\lambda=C_d \vr$ where $C_d\ge1$ is such that one can inscribe $B_r(x)\subseteq AQ\subseteq B_{\lambda r}(x)$ for some suitable cube $Q$, and noticing that $\mu_\Lambda(B_{\lambda r}(x))\lesssim \lambda^d \mu_\Lambda(B_r(x))$.   Using \eqref{eq:comp-meas}, we get \eqref{eqball} for $\mu_\omega$.

It remains to show that {$\mu_{\omega}(\partial_A\Omega)\lesssim (\frac{\Theta_\omega}{\vartheta_\omega})^5\pp(\Omega)$.} 
Notice that by  \eqref{eq:countingdoubling} and \eqref{eq:comp-meas}, the measure $\mu_\omega$ is doubling with a doubling constant $\le 5^{d}\Mf/\mf$. So, applying Lemma~\ref{lem:p} to $\varphi(x,y)=\delta_1(|A^{-1}(x-y)|)$ we get 
\begin{align*}
    \mu_\omega(\partial_A\Omega) &\leq \sum_{x\in\partial_A\Omega} \omega_x \sum_{y\in\Omega^c}\varphi(x,y) \leq \vartheta_\omega^{-1} \int_\Omega \int_{\Omega^c} \varphi(x,y) \,d\mu_\omega(x) \,d\mu_\omega(y) \\&
    \lesssim \frac{\Theta_\omega^4}{\vartheta_\omega^5} \pp(\Omega) \sum_{n\geq0} \sup_{y\in\Lambda} \int_{A_{n,y}}(1+|x-y|)\delta_1(|A^{-1}(x-y)|)\,d\mu_\omega(x) \\
    &\leq \frac{\Theta_\omega^5}{\vartheta_\omega^5} \pp(\Omega) \sum_{x\in\Lambda}(1+|x|)\delta_1(|A^{-1}x|)
    \\ &\lesssim \frac{\Theta_\omega^5}{\vartheta_\omega^5}  \pp(\Omega) \#\{u\in\Z^d:\ |u|=1\} \max_{u\in\Z^d: \ |u|=1}(1+|Au|)\lesssim  \frac{\Theta_\omega^5}{\vartheta_\omega^5} \pp(\Omega)\,. \qedhere
\end{align*}
\end{proof}

\subsection{Stability under discretization}
\label{sec_stab}

For a  compact set $\Omega\subseteq \R^d$ and a (full-rank) lattice $\Lambda$ we define the discretized set $\Omega_\Lambda=  \Omega\cap \Lambda$ regarded as a subset of $(\Lambda,\mu_\Lambda,\dd)$, where as before $\mu_\Lambda$ denotes the normalized counting measure and $\dd$ is the Euclidean metric. For $\Omega$ with regular boundary we can estimate the perimeter $\pp(\Omega_\Lambda)$ and the constant $\ii(\Omega_\Lambda)$ from \ref{stripe} in terms of $\HH(\partial\Omega)$ and the regularity parameters. Importantly, the following bounds are stable under isotropic contractions of the lattice, cf. Remark~\ref{rem_isof}.

\begin{lemma}\label{th:gabor_distance}
    Let $\Omega \subseteq \R^{d}$ be a compact  set with regular boundary at scale $\eta>0$ and constant $\kappa$. Consider $\Lambda=A\mathbb{Z}^d$ with $A\in \gl_d(\R)$ endowed with the Euclidean distance 
    $\mathrm{d}$ and a weighted measure $\mu_\omega$, cf. \eqref{eq:weighted_measure}. Then, \ref{stripe} is satisfied for $\Omega_\Lambda=\Omega\cap\Lambda$ as a subset of $\Lambda$ with $\gamma=d$ and
    \begin{align}
   \label{eq_mlml1}    
    \ii(\Omega_\Lambda)&\lesssim \max\{1,\|A\|^{d}\} \frac{\Theta_\omega}{|\Lambda|}\frac{\HH(\partial \Omega)}{\kappa\, \min\{1,\eta^{d-1}\}}\,, \qquad \text{and}
    \\  \label{eq_mlml3}\pp(\Omega_\Lambda)&\lesssim \isof_\Lambda \cdot \frac{\Theta_\omega^4}{\vartheta_\omega^3|\Lambda|} \cdot 
    \frac{\HH(\partial \Omega)}{\kappa\, \min\{1,\eta^{d-1}\}}\,.
    \end{align}
    In addition, for $\mu_\omega=\mu_\Lambda$,
    \begin{equation}\label{eq:comp-Omega-Omega_Lambda}
    \big|\mu_\Lambda(\Omega_\Lambda)-|\Omega|\big|\lesssim   \|A\|\cdot\max\{1,\|A\|^{d-1}\} \cdot\frac{\HH(\partial \Omega)}{\kappa\, \min\{1,\eta^{d-1}\}}\,.
    \end{equation}
     The implied constants depend only on the dimension $d$. 
\end{lemma}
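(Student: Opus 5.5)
The three estimates all reduce to comparing lattice points near $\partial\Omega$ with the Lebesgue measure of a neighbourhood of $\partial\Omega$ in $\R^d$. That neighbourhood is controlled by Proposition~\ref{reduct}, applied in $X=\R^d$, which is $1$-quasiconvex and doubling with $\gamma=d$ and $C_X=2^d$. The basic counting fact is as follows. Let $P=A[-\tfrac12,\tfrac12)^d$ be the fundamental cell, with $|P|=|\Lambda|$ and $\operatorname{diam}P\le \sqrt d\,\|A\|$. For any set $S\subseteq\R^d$,
\[
|\Lambda|\,\#(S\cap\Lambda)\le |S+P|\le |\{x:\mathrm{d}(x,S)\le \sqrt d\|A\|\}|.
\]
In addition, \eqref{eqred2} gives
\[
|\{x:\mathrm{d}(x,\partial\Omega)\le R\}|\lesssim \kappa^{-1}R\bigl(1+(R/\eta)^{d-1}\bigr)\HH(\partial\Omega).
\]
Throughout I will use $\mu_\omega\le (\Theta_\omega/|\Lambda|)\mu_\Lambda$, which follows from \eqref{eq:comp-meas}.

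\textbf{Proof of \eqref{eq_mlml1}.} Fix $n\ge0$ and let $E=\Omega_\Lambda$ or $E=\Lambda\setminus\Omega_\Lambda$, viewed as subsets of $\Lambda$. Take $x\in E^c$ with $\mathrm{d}(x,E)\le 2^n$. The segment from $x$ to a nearest point of $E$ joins a point of $\Omega$ to a point of $\Omega^c$, or the reverse, so it meets $\partial\Omega$. Hence $\mathrm{d}(x,\partial\Omega)\le 2^n$. Counting as above gives
\[
\mu_\omega(\{x\in E^c:\mathrm{d}(x,E)\le 2^n\})\le \frac{\Theta_\omega}{|\Lambda|}\,\bigl|\{y:\mathrm{d}(y,\partial\Omega)\le 2^n+\sqrt d\|A\|\}\bigr|.
\]
Write $R=2^n+\sqrt d\|A\|\lesssim 2^n\max\{1,\|A\|\}$. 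Then
\[
R\bigl(1+(R/\eta)^{d-1}\bigr)\lesssim 2^{nd}\max\{1,\|A\|\}^d/\min\{1,\eta^{d-1}\},
\]
using $2^n\ge1$. Multiplying by $2^{-d(n-1)}$ and taking the supremum over $n$ gives \eqref{eq_mlml1}.

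\textbf{Proof of \eqref{eq_mlml3}.} By Proposition~\ref{lemdisper},
\[
\pp(\Omega_\Lambda)\lesssim (\Theta_\omega/\vartheta_\omega)^3\vr^{2d}\|A\|^{-1}\mu_\omega(\partial_A\Omega_\Lambda),
\]
so it is enough to bound $\mu_\omega(\partial_A\Omega_\Lambda)$. A point of $\partial_A\Omega_\Lambda$ lies in $\Omega$ and has a lattice neighbour $x+Au\notin\Omega$ with $|u|=1$. That neighbour is at distance at most $\|A\|$, so $\mathrm{d}(x,\partial\Omega)\le\|A\|$. The counting bound with $R=(1+\sqrt d)\|A\|$ then gives
\[
\|A\|^{-1}\mu_\omega(\partial_A\Omega_\Lambda)\lesssim \frac{\Theta_\omega}{|\Lambda|}\,\frac{\max\{1,\|A\|^{d-1}\}\HH(\partial\Omega)}{\kappa\min\{1,\eta^{d-1}\}}.
\]
Multiplying by $(\Theta_\omega/\vartheta_\omega)^3\vr^{2d}$ and bounding $\max\{1,\|A\|^{d-1}\}\le\max\{1,\|A\|^d\}$ yields $\isof_\Lambda\,\Theta_\omega^4/(\vartheta_\omega^3|\Lambda|)$ times the geometric factor, which is \eqref{eq_mlml3}.

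\textbf{Proof of \eqref{eq:comp-Omega-Omega_Lambda}.} Write
\[
\mu_\Lambda(\Omega_\Lambda)=\Bigl|\bigcup_{\lambda\in\Omega_\Lambda}(\lambda+P)\Bigr|.
\]
The symmetric difference of $\Omega$ with this union consists of cells $\lambda+P$ that meet both $\Omega$ and $\Omega^c$, together with the corresponding partial cells. Every point in it lies within $\sqrt d\|A\|$ of $\partial\Omega$. Applying \eqref{eqred2} with $R=\sqrt d\|A\|$ gives the bound $\|A\|\max\{1,\|A\|^{d-1}\}\HH(\partial\Omega)/(\kappa\min\{1,\eta^{d-1}\})$.

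The step needing most care is the reduction from discrete distances on $\Lambda$ to distances from $\partial\Omega$ in $\R^d$. The segment argument handles this, including the degenerate case $E=\emptyset$, where the measured set is empty. The powers of $\max\{1,\|A\|\}$ also need tracking so that the bounds stay uniform as the lattice is contracted.
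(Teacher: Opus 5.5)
Your proof is correct and follows essentially the same route as the paper. You reduce lattice distances to distances from $\partial\Omega$ via the segment argument, thicken by fundamental cells, compare volumes using the neighbourhood bound from Proposition~\ref{reduct}, and combine Proposition~\ref{lemdisper} with the same count of $\partial_A\Omega_\Lambda$ for the perimeter. The only differences are cosmetic: you use $R=2^n+\sqrt d\,\|A\|$ instead of the paper's dyadic bookkeeping with $2^{k+1}$, and you bound $\bigl|\Omega\,\triangle\,\bigcup_{\lambda\in\Omega_\Lambda}(\lambda+P)\bigr|$ directly instead of using the paper's $\Omega_\Lambda^{\pm}$ sandwich, which if anything is slightly cleaner.
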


\begin{proof}
\noindent {\bf Step 1}. We start by showing \eqref{eq_mlml1}.
From Proposition~\ref{reduct} we know that 
\begin{equation}\label{eq:measure-strip}
\big|\{x\in \R^{d}:\ \mathrm{d}(x,\partial\Omega)\le 2^n\}\big|\lesssim \frac{\HH(\partial\Omega)}{\kappa\, \min\{1,\eta^{d-1}\}}\, 2^{ dn}, \qquad n\in\N_0\,, 
\end{equation}
since $\R^{d}$ is convex and the Lebesgue measure is doubling with constant $C_{\R^{d}}=2^{d}$.

Let us   define $Q=[-1/2,1/2]^{d}$ and show that  
  \begin{align*}
          \{\lambda\in  \Omega_\Lambda^c: \   \mathrm{d}(\lambda,\Omega_\Lambda)\leq 2^n \}+AQ &
        \subset \{x\in \R^{d} : \ \mathrm{d}(x,\partial\Omega)\le 2^{k+1} \}\,,
       \end{align*}
       where $k=\max\{n,\lceil\log_2(\sqrt{d} \|A\|)\rceil\}$. 
If $\mathrm{d}(\lambda,\lambda')\leq 2^n$ with $\lambda\in\Omega_\Lambda^c$ and $\lambda'\in\Omega_\Lambda$, there is $x\in \partial\Omega$ such that $\mathrm{d}(\lambda,x)\leq 2^n$. So if $y\in AQ$, then 
\[\mathrm{d}(\lambda+y,x)\leq \|y\|+2^n\le \sqrt{d}\|A\| + 2^n\le 2^{k+1}\,.\]
        So, by a comparison of  volumes it thus follows from \eqref{eq:measure-strip} that
   \begin{align*}
          |\Lambda|\#\{\lambda\in  \Omega_\Lambda^c: \  \mathrm{d}(\lambda,\Omega_\Lambda)\le 2^n \}  &
        \leq  \big|\{x\in \R^{d} : \  \mathrm{d}(x,\partial\Omega)\leq 2^{k+1} \}\big|
        \\
        &\lesssim  \frac{\HH(\partial\Omega)}{\kappa\, \min\{1,\eta^{d-1}\}}\,2^{ d(k+1)}\lesssim  \frac{\HH(\partial\Omega)}{\kappa\, \min\{1,\eta^{d-1}\}}\, \max\{1,\|A\|^{d}\}\,2^{ dn}\,.
       \end{align*}
Together with \eqref{eq:comp-meas}, this provides a bound related to \ref{stripe} and the set $E=\Omega_\Lambda$, with the desired constants. An analogous argument applies to $E=\Omega_\Lambda^c$ (by estimating the number of elements in 
$   \{\lambda\in  \Omega_\Lambda: \  \mathrm{d}(\lambda,\Omega_\Lambda^c)\leq 2^n \}$).

\smallskip

\noindent {\bf Step 2}. Regarding \eqref{eq_mlml3},
by Proposition~\ref{lemdisper} and the comparison \eqref{eq:comp-meas},
\[\text{Per}(\Omega_\Lambda)\lesssim  \vr^{2d} \|A\|^{-1} \frac{\Theta_\omega^4}{\vartheta_\omega^3}\#(\partial_A\Omega_\Lambda)\,.\]
Since $\partial_A\Omega_\Lambda+AQ\subset \partial\Omega+B_{\sqrt{d} \|A\|}(0)$, we deduce by \eqref{eqred2} that  
\begin{align*}
\pp(\Omega_\Lambda)&\lesssim \frac{\Theta_\omega^4}{\vartheta_\omega^3|\Lambda|}  \vr^{2d}\|A\|^{-1} |\Lambda| \#(\partial_A\Omega_\Lambda)\leq \frac{\Theta_\omega^4}{\vartheta_\omega^3|\Lambda|}\vr^{2d}\|A\|^{-1}|\partial\Omega+B_{\sqrt{d} \|A\|}(0)|
\\ &\lesssim \frac{\Theta_\omega^4}{\vartheta_\omega^3|\Lambda|} \vr^{2d} \max\{1,\|A\|^{d-1}\} \frac{\HH(\partial\Omega) }{\kappa\min\{1,\eta^{d-1}\}}\,. 
\end{align*}
 \smallskip
\noindent {\bf Step 3}. Equation \eqref{eq_mlml3} follows from  \eqref{eq_mlml1} and \eqref{eq_mlml3}. To prove \eqref{eq:comp-Omega-Omega_Lambda}, we let $\Omega_\Lambda^+:=\{\lambda\in\Lambda: \ \Omega\cap (\lambda+AQ)\neq \emptyset\}$ and $\Omega_\Lambda^-:=\Omega_\Lambda\smallsetminus \partial_A\Omega_\Lambda.$
Then by  Proposition~\ref{reduct}
\begin{align*}
|\Lambda|\#\Omega_\Lambda&= |\Lambda|\#\Omega_\Lambda^-+|\Lambda|\# (\partial_A\Omega_\Lambda)\leq |\Omega|+\big|\partial\Omega+B_{2\sqrt{d}\|A\|}(0)\big|\\&\leq |\Omega|+ C\|A\|\max\{1,\|A\|^{d-1}\} \frac{\HH(\partial\Omega) }{\kappa\min\{1,\eta^{d-1}\}}\,.
\end{align*}
Similarly,  
\begin{align*}
|\Lambda|\#\Omega_\Lambda&= |\Lambda|\#\Omega_\Lambda^+-|\Lambda|\#  \big(\Omega_\Lambda^+\smallsetminus\Omega_\Lambda\big)\geq |\Omega|-\big|\partial\Omega+B_{2\sqrt{d}\|A\|}(0)\big|
\\
&\ge  |\Omega|- C\|A\|\max\{1,\|A\|^{d-1}\} \frac{\HH(\partial\Omega) }{\kappa\min\{1,\eta^{d-1}\}}\,. \qedhere
\end{align*}
\end{proof}

\section{Spectral deviation for concentration operators}\label{sec:deviation}
\subsection{Schatten-norm estimates for Hankel operators}\label{sec:hankel}
The next proposition provides a bound for the Schatten $p$-norms of the Hankel operator $H=H_\Omega=(1-P) 1_\Omega P$. Crucially, we quantify the dependence on $p$ for $p \to 0^+$. With the set of tools developed in Section~\ref{sec_bo} at hand, we shall parallel and aptly adapt the argument of \cite[Proposition 3.1]{MR}. Recall the dyadic decay measure $N(s)$ from \eqref{eqn}.

\begin{proposition}\label{th:H_norm}
    Let $\Omega\subseteq X$ be a compact set, and let $H$ be the corresponding Hankel operator. Assume Conditions~\ref{norm2} and \ref{stripe} hold and let $p \in(0,2]$ and $\alpha\in (0,1]$.
    Then
    \begin{align} \label{eq:H0}
	\lVert H \rVert_{\S_p}^p 
        \lesssim    \alpha^{-1+\tfrac{p}{2}}\ii(\Omega) \,N\big((\gamma+\alpha)\tfrac{2-p}{p}+\gamma\big)^{\tfrac{p}{2}} \,.
    \end{align}
If in addition Condition~\ref{doubling} is met, 
then one also has
    \begin{align} \label{eq:H}
	\lVert H \rVert_{\S_p}^p 
        \lesssim    {C_X^{2p}}  \left(\frac{\ii(\Omega)}{\alpha}\right)^{1-\tfrac{p}{2}}\pp(\Omega)^{\tfrac{p}{2}} \, N\big((\gamma+\alpha)\tfrac{2-p }{p}+1\big)^{\tfrac{p}{2}} \,.
    \end{align}
\end{proposition}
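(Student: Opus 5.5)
The plan is to reduce $\|H\|_{\S_p}^p$ to a dyadic sum over shells around the boundary. Each shell contributes a product of a measure factor and a Hilbert--Schmidt factor. A weighted Hölder inequality in $n$ then separates these into a sum controlled by \ref{stripe} and a non-local perimeter, which is handled by Lemma~\ref{lem:p2} (for \eqref{eq:H0}) or Lemma~\ref{lem:p} (for \eqref{eq:H}).

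\textbf{Step 1 (localization).} Since $(1-P)P=0$, we have $H=(1-P)1_\Omega P=-(1-P)1_{\Omega^c}P$. Hence $H$ is obtained by compressing the commutator $[1_\Omega,P]=1_\Omega P1_{\Omega^c}-1_{\Omega^c}P1_\Omega$, and it suffices to bound $\|1_\Omega P 1_{\Omega^c}\|_{\S_p}$ and its symmetric counterpart. Partition $\Omega$ into the shells $S_0=\{x\in\Omega:\mathrm{d}(x,\Omega^c)\le 1\}$ and $S_n=\{x\in\Omega: 2^{n-1}<\mathrm{d}(x,\Omega^c)\le 2^n\}$ for $n\ge1$. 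By the quasi-triangle inequality $\|\sum_n A_n\|_{\S_p}^p\le\sum_n\|A_n\|_{\S_p}^p$ (valid for $p\le1$; for $1<p\le2$ one pays an absolute constant after grouping), we get
\[
\|1_\Omega P1_{\Omega^c}\|_{\S_p}^p\lesssim\sum_n \|1_{S_n}P1_{\Omega^c}\|_{\S_p}^p .
\]

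\textbf{Step 2 (one shell).} Write $1_{S_n}P1_{\Omega^c}=(1_{S_n}P)(P1_{\Omega^c})$ and apply Hölder in Schatten classes with $\tfrac1p=\tfrac1r+\tfrac12$. The factor $P1_{\Omega^c}$ must keep its localization, so the $\S_2$ norm goes on $1_{S_n}P1_{\Omega^c}$ itself. The aim is
\[
\|1_{S_n}P1_{\Omega^c}\|_{\S_p}^p\lesssim \mu(S_n)^{1-\frac p2}\Big(\int_{S_n}\int_{\Omega^c}\|K(x,y)\|_{\S_2}^2\,d\mu(y)\,d\mu(x)\Big)^{\frac p2},
\]
where the $\mu(S_n)$ factor comes from $\|1_{S_n}P\|_{\S_r}^r\le\|1_{S_n}P\|_{\S_2}^2=\mu(S_n)$. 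This uses \ref{norm2} and $\|1_{S_n}P\|\le1$, and is immediate when $r\ge2$, i.e.\ $p\ge1$.

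\textbf{Step 3 (summation).} Set $a_n=\mu(S_n)$, let $b_n$ be the Hilbert--Schmidt mass above, and put $s=(\gamma+\alpha)\tfrac{2-p}{p}$. Weighted Hölder gives
\[
\sum_n a_n^{1-\frac p2}b_n^{\frac p2}\le\Big(\sum_n 2^{-(\gamma+\alpha)n}a_n\Big)^{1-\frac p2}\Big(\sum_n 2^{s n}b_n\Big)^{\frac p2}.
\]
By \ref{stripe}, $a_n\lesssim\ii(\Omega)2^{\gamma n}$, so the first sum is at most $\ii(\Omega)\sum_n2^{-\alpha n}\lesssim\ii(\Omega)/\alpha$. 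On $S_n$ with $y\in\Omega^c$ one has $2^{n}\lesssim 1+\mathrm{d}(x,y)$. Hence the second sum is bounded by the non-local perimeter $\int_\Omega\int_{\Omega^c}(1+\mathrm{d}(x,y))^{s}\|K(x,y)\|^2_{\S_2}$, taken with $\varphi(x,y)=(1+\mathrm{d}(x,y))^{s}\|K(x,y)\|_{\S_2}^2$ in the lemmas.
\begin{itemize}
\item Lemma~\ref{lem:p2} bounds this by $\ii(\Omega)N(s+\gamma)$. Combining, $(\ii/\alpha)^{1-p/2}(\ii N(s+\gamma))^{p/2}=\alpha^{-1+p/2}\ii(\Omega)N(s+\gamma)^{p/2}$, which is \eqref{eq:H0}.
\item Lemma~\ref{lem:p}, under \ref{doubling}, bounds it by $C_X^4\pp(\Omega)N(s+1)$; here the symmetry $\|K(x,y)\|=\|K(y,x)\|$ handles the second supremum. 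This yields \eqref{eq:H} with the stated $C_X^{2p}$.
\end{itemize}

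\textbf{Main obstacle.} The hardest point is Step 2 for $0<p<1$. There $r<2$, and $\|1_{S_n}P\|_{\S_r}$ is not controlled by $\mu(S_n)$ through the trace. I would first try a further decomposition of $S_n$ into pieces $Q$ of diameter $\sim 2^n$ with bounded overlap, each carrying the factor $\mu(Q)$. The alternative is to follow \cite[Proposition 3.1]{MR}: bound $\|H\|_{\S_p}^p\le\sum_k\|He_k\|^p$ for a suitable near-orthonormal system localized on the shells, so that the exponent $1-\tfrac p2$ arises from counting degrees of freedom via $\operatorname{trace}(P1_{S_n}P)=\mu(S_n)$ (Lemma~\ref{lem:trace}). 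Making this work with constants independent of $p$ is where I expect the real difficulty.
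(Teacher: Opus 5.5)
\textbf{There is a genuine gap, and it is in Step 2.} It fails for every $p<2$, not only for $0<p<1$. The shell estimate
\[
\|1_{S_n}P1_{\Omega^c}\|_{\S_p}^p\lesssim \mu(S_n)^{1-\frac p2}\,\|1_{S_n}P1_{\Omega^c}\|_{\S_2}^p
\]
is false in general. Take the following example:
\begin{itemize}
\item $X=\{a,b\}$ with $\mathrm{d}(a,b)=1$, $\mu(\{a\})=\varepsilon$ and $\mu(\{b\})=1-\varepsilon$;
\item $\hh=\mathrm{span}\{1\}$, so that $K\equiv1$ and \ref{norm2} holds;
\item $\Omega=\{a\}=S_0$.
\end{itemize}
Then $1_{S_0}P1_{\Omega^c}f=(1-\varepsilon)f(b)\,1_{\{a\}}$. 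This is rank one, and all its Schatten norms equal $\sqrt{\varepsilon(1-\varepsilon)}$. Your right-hand side, however, is $\varepsilon(1-\varepsilon)^{p/2}$. The ratio is $\varepsilon^{-(1-p/2)}$, which is unbounded already for $p=1$.

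The Hölder step you describe does not exist. Applying Hölder to $(1_{S_n}P)(P1_{\Omega^c})$ puts the $\S_2$ norm on $P1_{\Omega^c}$, whose Hilbert--Schmidt norm is $\mu(\Omega^c)^{1/2}$. There is also no factorization $1_{S_n}P1_{\Omega^c}=X\,(1_{S_n}P1_{\Omega^c})$ with $\|X\|_{\S_r}^r\lesssim\mu(S_n)$, because $1_{S_n}P$ is not a projection.

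The example shows a deeper problem with Steps 1--2 as set up. Decomposing only $\Omega$ and paying only $\mu(S_n)$ cannot work. Here $\|H\|_{\S_p}^p=(\varepsilon(1-\varepsilon))^{p/2}\approx\varepsilon^{p/2}$, and this mass is carried by the $\Omega^c$ side.

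\textbf{The missing idea is your ``alternative'', in an exact continuous form.} By \ref{norm2}, the kernel vectors $K_yg_n$ (with $\{g_n\}$ an orthonormal basis of $\HH$) give a continuous resolution of the identity by vectors of controlled norm. Set $A=(H^*H)^{p/2}$.
\begin{itemize}
\item Write $\|H\|_{\S_p}^p=\mathrm{trace}(A)$ as $\sum_n\int_X\langle AK_yg_n,K_yg_n\rangle\,d\mu(y)$.
\item Apply the concavity inequality $\langle Av,v\rangle\le\langle H^*Hv,v\rangle^{p/2}$ for unit vectors \cite[Lemma 2.1]{MR}.
\item Apply Hölder in $n$.
\end{itemize}
This gives, with no $p$-dependent constant,
\[
\|H\|_{\S_p}^p\le\int_X\|HK_y\|_{L^2(X,\S_2)}^p\,d\mu(y).
\]
Next, $H^*H=T_\Omega T_{\Omega^c}\preceq T_E$ for $E=\Omega,\Omega^c$. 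This bounds $\|HK_y\|^2\le\int_E\|K(x,y)\|_{\S_2}^2\,d\mu(x)$, with $E$ taken opposite to $y$. The result is the two-sided estimate
\[
\|H\|_{\S_p}^p\le\int_\Omega\Big(\int_{\Omega^c}\|K(x,y)\|_{\S_2}^2\,d\mu(y)\Big)^{\frac p2}d\mu(x)+\int_{\Omega^c}\Big(\int_{\Omega}\|K(x,y)\|_{\S_2}^2\,d\mu(y)\Big)^{\frac p2}d\mu(x).
\]
From there your Step 3 goes through, applied to each term. The paper does the weighted Hölder in the continuous variable with weight $(1+\mathrm{d}(x,E^c))^{\gamma+\alpha}$ rather than over shells. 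Lemmas~\ref{lem:p2} and \ref{lem:p} then finish. No commutator decomposition, quasi-triangle inequality, or shell-wise Schatten bound is needed.
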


\begin{proof} 
We assume \ref{norm2} and first check that 
	\begin{align*}
	    \|H\|_{\S_p}^p &\le \int_X \|HK_x\|_{\S_2} ^{p}  \, d\mu(x)\,.
	\end{align*}
    Indeed, let $\{\psi_k\}_{k\in I}\subset L^2(X)$ and $\{g_n\}_{n\in J}\subset \HH$ be orthonormal bases. Then $\{\psi_k g_n\}_{(k,n)\in I\times J}$ forms an orthonormal basis for $L^2(X,\HH)$.
Since $K$ is the reproducing kernel of  $\mathbb{H}\subset L^2(X,\HH)$ it follows that for $f,g\in\HH$ we have that $\langle f,K(x,\, \cdot\,)g\rangle \in L^2(X)$ which implies that for almost every $x,y\in X$
\begin{align*}
\langle f,K(x,y)g\rangle
&=\sum_{k\in I}\int_X\big\langle f,K(x,x') {\psi_k(x')}\overline{\psi_k(y)}g\big\rangle  \,d\mu(x')\,.
\end{align*}
For $A=(H^\ast H)^{p/2}$, using \cite[Lemma 2.1]{MR} we obtain
    \begin{align*}
   \|&H\|_{\S_p}^p  =\text{trace}(A)=\sum_{(k,n)\in I\times J} \langle A( \psi_k g_n),\psi_k g_n\rangle_{L^2(X, \HH)}
    \\ 
&=\sum_{k,n}\int_X \langle A( \psi_k g_n)(x),\psi_k(x)g_n\rangle \,d\mu(x)
\\
&=\sum_{k,n}\int_X\int_X \big\langle A (K_y\psi_k(y)g_n)(x),\psi_k(x)g_n\big\rangle\, d\mu(y)d\mu(x)
\\
&=\sum_{k,n}\int_X\int_X \big\langle A (K_yg_n)(x),\overline{\psi_k(y)}\psi_k(x)g_n\big\rangle \,d\mu(y)d\mu(x)
\\
&=\sum_{k,n}\int_X\int_X \int_X\big\langle A (K_y g_n)(x),K(x,x')\overline{\psi_k(y)}\psi_k(x')g_n\big\rangle \,d\mu(x') d\mu(y)d\mu(x)
\\
&=\sum_{n\in J}\int_X\int_X  \big\langle A (K_yg_n)(x),K(x,y) g_n\big\rangle\, d\mu(y)d\mu(x)
\\
&=\sum_{n\in J}\int_X\|K_yg_n\|_{L^2(X,\HH)}^2\int_X  \left\langle A \left(\frac{K(\, \cdot\, , y)g_n }{\|K_yg_n\|_{L^2(X,\HH)}}\right)(x),\frac{K(x,y)g_n }{\|K_yg_n\|_{L^2(X,\HH)}}\right\rangle \,d\mu(x)d\mu(y)
\\
&\leq\sum_{n\in J}  \int_X\|K_yg_n\|_{L^2(X,\HH)}^2  \\
&\hspace{2cm}\cdot\left(\int_X \left \langle H^\ast H\left(\frac{K(\, \cdot\, , y)g_n }{\|K_yg_n\|_{L^2(X,\HH)}}\right)(x),\frac{K(x,y)g_n }{\|K_yg_n\|_{L^2(X,\HH)}}\right\rangle \,d\mu(x)\right)^{\frac{p}{2}}\,d\mu(y)
\\
&=\sum_{n\in J}  \int_X \|K_yg_n\|_{L^2(X,\HH)}^{2-p} \big \|H( K_yg_n)\big\|^{p}_{L^2(X, \HH)}\,d\mu(y) 
\\
&\leq \int_X\left(\sum_{n\in J}  \|K_yg_n \|^{2}_{L^2(X, \HH)}\right)^{1-\frac{p}{2}} \left(\sum_{n\in J}\big \|H \big(K_yg_n\big)   \big\|^{2}_{L^2(X, \HH)}\right)^{\frac{p}{2}}\,d\mu(y) 
\\
&= \int_X   \|K_y  \|_{L^2(X, S_2)}^{2-p}  \|H K_y   \|_{L^2(X,\S_2)}^{p}\,d\mu(y) 
\\
&\leq {}\int_X    \|H K_y    \|_{L^2(X, \S_2)}^{p}\,d\mu(y)\,,
\end{align*}
where we used the assumption \ref{norm2} in the final step.
    
    Notice that $H^* H=T_\Omega T_{\Omega^c}\preceq T_E$, where $E$ denotes either $\Omega$ or $\Omega^c$. So,
        \begin{align*}
    \|HK_y\|_{L^2(X,\S_2)}^2 &= \sum_{n\in J}\int_X\langle H^* H( K_yg_n)(x'), K_y(x')g_n \rangle \,d\mu(x')
    \\
    &\leq \sum_{n\in J}\int_X\langle T_E (K_yg_n)(x'), K_y(x')g_n \rangle \, d\mu(x')
      \\
    &=\sum_{n\in J}\int_X\int_E\langle K(y,x')K(x',x) K(x ,y)g_n,g_n \rangle \,d\mu(x) d\mu(x')
          \\
&=\int_E \|K(x,y)\|_{\S_2}^2\,d\mu(x)\,.
    \end{align*}
    In particular,
        \begin{align} \label{eq2r}
	    \|H\|_{\S_p}^p\le \int_{\Omega} \Big(\int_{\Omega^c} \|K(x,y)\|_{\S_2}^2\, d\mu(y)\Big)^{\frac{p}{2}} \, d\mu(x)+\int_{\Omega^c} \Big(\int_{\Omega} \|K(x,y)\|_{\S_2}^2\, d\mu(y)\Big)^{\frac{p}{2}} \, d\mu(x)\,.
	\end{align}
                Recall the parameter $\alpha\in (0,1]$ and for $E=\Omega$ or $\Omega^c$ define 
        \[\psi(x)=(1+\mathrm{d}(x,E^c))^{\gamma+\alpha} \,, \qquad x\in X\,. \]
By H\"{o}lder's inequality,
	\begin{multline}\label{i3r}
            \int_{E}  \Big(\int_{E^c} \|K(x',x)\|_{\S_2}^2 \, d\mu(x')\Big)^{\frac{p}{2}}  \, d\mu(x) 
            \\ 
            \le \Big( \int_{E} \frac{1}{\psi(x)} \, d\mu(x)\Big)^{1-\frac{p}{2}} \Big(\int_{E}\int_{E^c} \psi(x)^{\frac{2-p}{p}} \|K(x',x)\|_{\S_2}^2 \, d\mu(x')\,d\mu(x)\Big)^{\frac{p}{2}} .
	\end{multline}
        For the first integral, using Condition~\ref{stripe} and the assumption $\alpha \le1$ we obtain
	\begin{align}\label{coar}
            \int_{E} \frac{1}{\psi(x)} \, d\mu(x) &
            = \int_{\{x\in E : \  \mathrm{d}(x,E^c)\le 1\}} \frac{1}{\psi(x)} \, d\mu(x) +\sum_{n\in\N} \int_{\{x\in E : \ 2^{n-1}<  \mathrm{d}(x,E^c)\le 2^{n}\}} \frac{1}{\psi(x)} \, d\mu(x) \\&
            \le \sum_{n\in\N_0} {2^{-(\gamma+\alpha)(n-1)}}\mu(\{x\in E: \ \mathrm{d}(x,E^c)\le 2^n\}) \notag\\&
            \le  \ii(\Omega) \sum_{n\in\N_0} 2^{-\alpha (n-1)}
            \lesssim \ii(\Omega) \frac{1}{2^\alpha-1}\lesssim \frac{\ii(\Omega)}{\alpha}\,. \notag
	\end{align}
        For the second integral,
        \begin{align*}
            \int_{E}\int_{E^c} \psi(x)^{\frac{2-p}{p}} \|K(x',x)\|_{\S_2}^2& \, d\mu(x')\,d\mu(x) \\&
            \le \int_{E}\int_{E^c} (1+\mathrm{d}(x,x'))^{(\gamma+\alpha)\frac{2-p }{p}}\|K(x',x)\|_{\S_2}^2 \, d\mu(x')\,d\mu(x) \\&
            = \int_{\Omega}\int_{\Omega^c} (1+\mathrm{d}(x,x'))^{(\gamma+\alpha)\frac{2-p }{p}}\|K(x',x)\|_{\S_2}^2 \, d\mu(x')\,d\mu(x) \,.
        \end{align*}
        Combining this with \eqref{eq2r}, \eqref{i3r} and \eqref{coar} we get
        \begin{align*}
            \|H\|_{\S_p}^p \lesssim \left(\frac{\ii(\Omega)}{ \alpha}\right)^{1-\tfrac{p}{2}}\left(\int_{\Omega}\int_{\Omega^c} (1+\mathrm{d}(x,x'))^{(\gamma+\alpha)\frac{2-p }{p}}\|K(x',x)\|_{\S_2}^2 \, d\mu(x')\,d\mu(x) \right)^{\frac{p}{2}}\,.
        \end{align*}
        The result now follows from applying Lemmas~\ref{lem:p2} and \ref{lem:p} to obtain \eqref{eq:H0} and \eqref{eq:H} respectively. For the latter we also use that $\|K(x',x)\|_{\S_2}=\|K(x,x')\|_{\S_2}$. 
\end{proof}

\subsection{Proof of the main result}\label{subsec:deviation}

\begin{proof}[Proof of Theorem~\ref{th:general_bound}]
    The spectral deviation of $T_\Omega$ can be controlled in terms of the Hankel operator $H$ by functional calculus with the function $(t-t^2)^{p/2}$, which gives
    \begin{align*}
        \big| \#\{\lambda \in \sigma(T_{\Omega}): \lambda > \delta\} -  {  }\mu(\Omega) \big|
    \le 2\tau^{\frac{p}{2}} \|H\|_{\S_p}^p,
    \end{align*}
    where $\tau = \max\{\tfrac{1}{\delta},\tfrac{1}{1-\delta}\}$;
    see, e.g., \cite[Lemma 4.1]{MR} for details. Combining this with Proposition~\ref{th:H_norm}, we conclude that for $\alpha\in (0,1]$ and $p\in (0,2]$,
    \begin{align}\label{eqplu}
        \big| \#\{\lambda \in \sigma(T_{\Omega}): \lambda > \delta\} -  {  }\mu(\Omega) \big|
    \lesssim     \alpha^{-1+\tfrac{p}{2}}\ii(\Omega)  \tau^{\frac{p}{2}} N\big((\gamma+\alpha)\tfrac{2-p}{p}+\gamma\big)^{\tfrac{p}{2}} \,.
    \end{align}
        Without loss of generality, we can assume that there is some $s\ge\gamma$ such that $N(s)<\infty$, since otherwise the theorem  holds trivially. 

    For $s\ge\gamma$ with $N(s)<\infty$, define
    \begin{align}\label{eqap}
        \alpha=1/\log^{\ast}\big((\tau {}N(s))^{1/s}\big) 
        \,,
        \qquad \text{and} \qquad
        p=2\frac{\gamma+\alpha}{s+\alpha}\,,
    \end{align}
    so that the function $N$ on the right-hand side of \eqref{eqplu} is evaluated at $s$.
  Since $2\le \tau N(s)<\infty$, it follows that $\alpha\in (0,1]$ and $p\in (0,2]$.
  Thus, $p$ and $\alpha$ are valid choices in the sense that  Proposition~\ref{th:H_norm} is applicable.
   Plugging the values from \eqref{eqap} into \eqref{eqplu} we get
    \begin{align*}
        \big| \#\{\lambda \in \sigma(T_{\Omega}): \lambda > \delta\} - \mu(\Omega) \big| 
        &\lesssim  \ii(\Omega)\,    { }  \big(\tau   { }N(s)\big)^{\tfrac{\gamma+\alpha}{s+\alpha} } \log^\ast\Big(\big(\tau {}N(s)\big)^{\tfrac{1}{s}}\Big)^{\tfrac{s-\gamma}{s+\alpha} }
       \\ &
        \lesssim  \ii(\Omega)\,  { } \big(\tau  {}N(s)\big)^{\tfrac{\gamma}{s}+\tfrac{\alpha}{s} } \log^\ast\Big(\big(\tau  {}N(s)\big)^{\tfrac{1}{s}}\Big)^{1-\tfrac{\gamma}{s} }
        \\ &
        \lesssim  \ii(\Omega)\,  { } \big(\tau  {}N(s)\big)^{\tfrac{\gamma}{s} } \log^\ast\Big(\big(\tau  {}N(s)\big)^{\tfrac{1}{s}}\Big)^{1-\tfrac{\gamma}{s} }\,.
    \end{align*}
    Taking the infimum over $s\ge\gamma$, we obtain \eqref{eq:last_common}. The proof of \eqref{eq:last_common2} is analogous once we bound 
    \[\ii(\Omega)^{1-\tfrac{p}{2}}\pp(\Omega)^{\tfrac{p}{2}}\le \max\{\ii(\Omega),\pp(\Omega)\}\,.\qedhere\]
\end{proof}

\begin{proof}[Proof of Corollary~\ref{lemma:exp}]
For $s\ge 1$,
    \begin{align*}
(1+ \mathrm{d}(x,y))^s e^{-\alpha  \mathrm{d}(x,y)^{1/\beta}} \le \sup_{r\geq 0} (1+r)^s e^{-\alpha r^{1/\beta}}.
\end{align*}
The last expression is at most $2^s$ if $r\le 1$, whereas for $r>1$ 
\[(1+r)^s e^{-\alpha r^{1/\beta}}\le 2^s r^s e^{-\alpha r^{1/\beta}}\,,\]
which attains its maximum at $r=(\beta s/\alpha)^{\beta}$. Hence, 
\begin{align*}
N(s)
&\lesssim \sup_{y\in X} \int (1+\mathrm{d}(x,y))^{s+1} e^{-\alpha \mathrm{d}(x,y)^{1/\beta}} e^{\alpha \mathrm{d}(x,y)^{1/\beta}} \|K(x,y)\|_{\S_2}^2\, dx \\
&\leq {D_\hh} 2^{s+1} \max\left\{1, \big(\tfrac{\beta}{\alpha}\big)^{\beta (s+1)}\right\} (s+1)^{\beta (s+1)} 
\\ & \leq  {D_\hh} 2^{(\beta+1)2s} \max\left\{1, \big(\tfrac{\beta}{\alpha}\big)^{2\beta s }\right\} s^{ {\beta (s+1)}} \,.
\end{align*}
Now choose $s=\log^\ast(\tau {D_\hh} )$ and assume without loss of generality that $\log(\tau)\ge \gamma$ so that $s\ge \gamma$. We get
\begin{align*}
(\tau {} N(s))^{\tfrac{1}{s}}&\le 4^{\beta+1} \max\left\{1, \big(\tfrac{\beta}{\alpha}\big)^{2\beta}\right\} \big(\tau {D_\hh} \log^\ast(\tau {D_\hh}  )^\beta\big)^{\log^{\ast} (\tau {D_\hh}  ) ^{-1}} \log^{\ast}(\tau {D_\hh}  )^{ {\beta }}
\\
&\lesssim \log^{\ast}(\tau {D_\hh}  )^{ {\beta }}\,. 
\end{align*}
The result then follows from Theorem~\ref{th:general_bound}, bounding the infimum in \eqref{eq:last_common} by the value corresponding to our  particular choice of $s$.
\end{proof}

\section{Toeplitz operators and frame multipliers}\label{sec_frames}

For a separable Hilbert space $\mathcal{H}$ and a full-rank lattice $\Lambda\subset\R^d$, consider a \emph{frame} $\{\varphi_\lambda\}_{\lambda\in\Lambda}\subset \mathcal{H}$, that is, there are constants   $a,b>0$ such that
\begin{align}
    \label{eqframe}
  a\, \|f\|^2\le \sum_{\lambda\in\Lambda}|\langle f,\varphi_\lambda\rangle|^2\le  b\,  \|f\|^2,\qquad f\in\mathcal{H} \,.
\end{align}
We will assume throughout this section  that $\varphi_\lambda\neq 0$ for every $\lambda\in\Lambda$.
Let $S_\varphi:\mathcal{H}\to\mathcal{H}$ be the \emph{frame operator} given by
\[S_\varphi f=\sum_{\lambda\in\Lambda}\langle f,\varphi_\lambda\rangle \varphi_\lambda, \qquad f\in\mathcal{H} \,.\]
From \eqref{eqframe} it follows that $S_\varphi$ is invertible and, defining the \emph{canonical dual frame} as $\ddual_\lambda=S^{-1}_\varphi\varphi_\lambda$, we get the inversion formula
\begin{align}
    \label{eq_invframe}
    f=\sum_{\lambda\in\Lambda}\langle f,\varphi_\lambda\rangle \ddual_\lambda,\qquad f\in\mathcal{H} \,.
\end{align}
Given a compact set $\Omega\subset\R^d$, consider its associated \emph{frame multiplier} $M_{\varphi,\Omega}:\mathcal{H}\to\mathcal{H}$ given by
\begin{align}\label{eq_fm}
M_{\varphi,\Omega} f = \sum_{\lambda \in \Lambda \cap \Omega} \langle f, \varphi_\lambda \rangle \ddual_\lambda,\qquad f\in\mathcal{H}\,.
\end{align}
Applying our main results Theorem~\ref{th:general_bound} and Corollary~\ref{lemma:exp} will allow us to derive the following spectral deviation bounds for frame multipliers.
A key quantity in our estimates is  $\omega_\lambda:=\langle \ddual_{\lambda}, \varphi_\lambda \rangle$, the diagonal of the cross-Gram matrix of $\varphi$ and $\ddual$. Using $\omega$, we define   $\mu_\varphi:=\mu_\omega$ via \eqref{eq:weighted_measure}, together with the extrema $\mf$ and $\Mf$. Moreover, we set $
   C_\varphi := {\Mf^8}/({\mf^7 \,|\Lambda|})
$
and note that 
\[b^{-1}\|\varphi_\lambda\|^2\le\mf\le \omega_\lambda\le\Mf\le a^{-1}\|\varphi_\lambda\|^2\,,\qquad \lambda\in\Lambda\,.\]

\begin{theorem}\label{th:frame_conc}
Let $\Lambda = A\Z^{d}$ be a full-rank lattice with isotropic fineness $\isof_\Lambda$, cf. \eqref{eq_isof},
and $\Omega\subset\R^{d}$ a compact set with regular boundary at scale $\eta$ and constant $\kappa$. For a frame  $\{\varphi_\lambda\}_{\lambda\in\Lambda}$  for $\mathcal{H}$, consider the frame multiplier $M_{\varphi,\Omega}$ defined in \eqref{eq_fm}. If we assume that 
$$\frac{|\langle  \ddual_{\lambda'}, \varphi_\lambda \rangle|}{\sqrt{\omega_{\lambda'}\omega_{\lambda}}}\le u(\lambda-\lambda'),$$ for some $u:\R^d\to \R_{\ge 0}$,
 then, with the notation
$\delta\in(0,1)$, $\tau \coloneqq \max\{\frac{1}{\delta},\frac{1}{1-\delta}\} :$
\begin{enumerate}[label=(\roman*)]
\item\label{tfi} For every $s\ge 1$,
\begin{align*}
        \big|  \#\{\lambda \in \sigma(M_{\varphi,\Omega}):\  \lambda > \delta\} - {   } \mu_\varphi(\Omega) \big| \notag
       &\lesssim 
       \begin{multlined}[t]
       \isof_\Lambda \cdot C_\varphi
       \cdot\frac{\HH(\partial \Omega)}{\kappa\, \min\{1,\eta^{d-1}\}} \cdot      
       \\ \cdot  \big(\tau  N(s)\big)^{\frac{d}{s+d-1}}\Big(\log^\ast\Big(\big(\tau  N(s)\big)^{\tfrac{1}{s+d-1}}\Big)\Big)^{\tfrac{s-1}{s+d-1} }
       \,,
     \end{multlined}
\end{align*}
    where 
    \begin{align}
        N(s)=    
  \Mf
    \sum_{\lambda\in \Lambda} (1+ |\lambda|)^{s} u(\lambda)^2 \, .
    \end{align}
\item\label{tfii} For $\alpha,\beta>0$,
\begin{align*}
       \big|  \#\{\lambda \in \sigma(M_{\varphi,\Omega}):\  \lambda > \delta\} - {   } \mu_\varphi(\Omega) \big|
     \notag
       \lesssim 
           \isof_\Lambda &\cdot  \frac{\Mf}{|\Lambda|} \cdot \frac{\HH(\partial \Omega)}{\kappa\, \min\{1,\eta^{d-1}\}}  \cdot
           \\
          &\cdot  
          \big(\log^\ast(\tau D )\big)^{ d{\beta}} \cdot\log^\ast\big(\log^\ast(\tau D) \big)
            \,,
    \end{align*}
    where the implied constant depends on $\alpha,\beta,d$ and $D$ is given by
    \[D= \Mf \sum_{\lambda \in \Lambda} e^{\alpha |\lambda|^{1/\beta}} u(\lambda)^2
       \,.\]   
\end{enumerate}
\end{theorem}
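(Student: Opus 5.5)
The plan is to realize $M_{\varphi,\Omega}$, up to unitary equivalence and the multiplicity of the eigenvalue $0$, as a concentration operator $T_{\Omega_\Lambda}$ on a scalar RKHS over the discrete space $X=\Lambda$ with the weighted measure $\mu_\varphi=\mu_\omega$. Then we apply Theorem~\ref{th:general_bound}, respectively Corollary~\ref{lemma:exp}, and control the geometric constants through Lemma~\ref{th:gabor_distance}.

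\textbf{Construction of the RKHS.} Consider the analysis-type map
\[
Vf(\lambda)=\omega_\lambda^{-1/2}\langle f,\ddual_\lambda\rangle
\]
into $L^2(\Lambda,\mu_\omega)$. Its range satisfies $\|Vf\|^2_{L^2(\mu_\omega)}=\sum_\lambda |\langle f,\ddual_\lambda\rangle|^2$, so $V$ is an isomorphism onto $\hh:=V\mathcal{H}$, though not an isometry. A cleaner route is to work directly with the cross-Gram structure. Since $\sum_\lambda \langle f,\varphi_\lambda\rangle\ddual_\lambda=f$, the matrix $G_{\lambda,\lambda'}=\langle \ddual_{\lambda'},\varphi_\lambda\rangle$ is an (oblique) idempotent on $\ell^2$. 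The nonzero spectrum of $M_{\varphi,\Omega}=\sum_{\lambda\in\Omega}\langle\cdot,\varphi_\lambda\rangle\ddual_\lambda$ coincides with that of $1_\Omega G$.

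For the canonical dual, $G$ is the orthogonal projection onto the range of the analysis operator $C_\varphi f=(\langle f,\varphi_\lambda\rangle)$, because $G=C_\varphi S^{-1}C_\varphi^*$. Hence $\hh:=C_\varphi\mathcal{H}\subset\ell^2(\Lambda)$ is an RKHS with kernel $G$. Passing to the measure $\mu_\omega$ via $F\mapsto (\omega_\lambda^{-1/2}F(\lambda))$ gives kernel
\[
K(\lambda,\lambda')=\frac{G_{\lambda,\lambda'}}{\sqrt{\omega_\lambda\omega_{\lambda'}}}.
\]
The normalization \ref{norm2} then reads $\sum_{\lambda'}|G_{\lambda\lambda'}|^2/\omega_\lambda=G_{\lambda\lambda}/\omega_\lambda=1$, using that $G$ is a self-adjoint idempotent and that $G_{\lambda\lambda}=\omega_\lambda$. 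This is precisely the renormalization described after the assumptions. The operator $1_\Omega G$ is unitarily equivalent to $T_{\Omega_\Lambda}$, and $\operatorname{trace}=\mu_\omega(\Omega_\Lambda)=\mu_\varphi(\Omega)$ by Lemma~\ref{lem:trace}.

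\textbf{Decay and geometry.} With the hypothesis, $|K(\lambda,\lambda')|\le u(\lambda-\lambda')$. Therefore
\[
N(s)\le\sup_\lambda\sum_{\lambda'}(1+|\lambda-\lambda'|)^s u(\lambda-\lambda')^2\,\omega_{\lambda'}\le \Mf\sum_{\lambda}(1+|\lambda|)^s u(\lambda)^2,
\]
which is the $N(s)$ of the statement. The dyadic sum causes no issue, since the annuli are disjoint. The same bound gives $D_\hh\le D$ in \eqref{eq:exp}.

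The space $(\Lambda,\mu_\omega)$ is doubling with $C_X\lesssim 5^d\Mf/\mf$, as shown in the proof of Proposition~\ref{lemdisper}. Lemma~\ref{th:gabor_distance} gives \ref{stripe} with $\gamma=d$ and bounds $\ii(\Omega_\Lambda)$ by $\max\{1,\|A\|^d\}\frac{\Mf}{|\Lambda|}\frac{\HH(\partial\Omega)}{\kappa\min\{1,\eta^{d-1}\}}$. It also bounds $\pp(\Omega_\Lambda)$ by $\isof_\Lambda\frac{\Mf^4}{\mf^3|\Lambda|}\frac{\HH(\partial\Omega)}{\kappa\min\{1,\eta^{d-1}\}}$.

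\textbf{Conclusion.} For \ref{tfi}, apply \eqref{eq:last_common2} with $\gamma=d$. The implied constant $O(C_X^4)=O((\Mf/\mf)^4)$ multiplies $\max\{\ii,\pp\}\lesssim \isof_\Lambda\frac{\Mf^4}{\mf^3|\Lambda|}\cdots$, which yields exactly $C_\varphi=\Mf^8/(\mf^7|\Lambda|)$. For \ref{tfii}, apply Corollary~\ref{lemma:exp}, which only needs $\ii(\Omega_\Lambda)$ and no doubling. This gives the factor $\Mf/|\Lambda|$ and $\max\{1,\|A\|^d\}\le\isof_\Lambda$.

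\textbf{Main obstacle.} The main obstacle is the identification step: verifying carefully that $G$ is the orthogonal projection onto $C_\varphi\mathcal{H}$, and that the reweighting yields a genuine RKHS on $(\Lambda,\mu_\omega)$ with $K$ satisfying \ref{norm2}, so that the spectra agree. One must also check that $\omega_\lambda>0$, which follows from $\mf\ge b^{-1}\|\varphi_\lambda\|^2>0$.
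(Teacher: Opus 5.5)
Your proposal is correct and follows essentially the paper's proof. The paper also realizes $M_{\varphi,\Omega}$ (up to the eigenvalue $0$) as the concentration operator on $L^2(\Lambda,\mu_\varphi)$ with kernel $\langle \ddual_{\lambda'},\varphi_\lambda\rangle/\sqrt{\omega_\lambda\omega_{\lambda'}}$. It gets there directly through the weighted analysis operator $C_\varphi f(\lambda)=\langle f,\varphi_\lambda\rangle/\sqrt{\omega_\lambda}$ rather than by reweighting the $\ell^2$ Gram projection, which amounts to the same thing. It then checks \ref{norm2}, bounds $N(s)$ and $D$, and feeds the doubling constant $5^d\Mf/\mf$ and Lemma~\ref{th:gabor_distance} into Theorem~\ref{th:general_bound} and Corollary~\ref{lemma:exp} exactly as you do.

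Two small slips. First, $1_\Omega G$ is not unitarily equivalent to $T_{\Omega_\Lambda}$; it only shares its nonzero eigenvalues, via $G1_\Omega G$. Second, when bounding $N(s)$, bound $\omega_{\lambda'}\le\Mf$ inside each dyadic annulus before taking the supremum, and then use translation invariance. The order matters because $\sum_n\sup_\lambda$ is in general larger than $\sup_\lambda\sum_n$, so your intermediate expression is not itself an upper bound for $N(s)$, although your final bound is correct.
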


\begin{proof}
Without loss of generality we assume $\mf>0$ as the claim holds trivially otherwise.

\noindent {\bf Step 1}  \emph{(Identification of a RKHS)}. 
First note that $\omega_\lambda=\langle S_\varphi^{-1}\varphi_\lambda,\varphi_\lambda\rangle =\|S_\varphi^{-1/2}\varphi_\lambda\|>0.$ Consider the analysis operators $C_{\varphi},C_{\ddual} \colon
\mathcal{H} \to L^2(\Lambda,\mu_\varphi)$ by
\[C_\varphi f(\lambda)=\frac{\langle f,\varphi_\lambda\rangle}{\sqrt{\omega_\lambda}}, \qquad \text{and}\qquad C_{\ddual} f(\lambda)=\frac{\langle f,\ddual_\lambda\rangle}{\sqrt{\omega_\lambda}} \,.\]
The range of $C_\varphi$ is the closed subspace
\begin{equation}\label{eq:rangeframe}
    \hh_\varphi \coloneqq \{C_\varphi f \colon f \in \mathcal{H}
    \} \subseteq L^2(\Lambda,\mu_\varphi)\,.
\end{equation}
For $f\in \mathcal{H}$, the inversion formula  \eqref{eq_invframe}
gives
\begin{align*}
    C_\varphi f (\lambda) 
    = \sum_{\lambda' \in \Lambda} \langle f, \varphi_{\lambda'} \rangle \frac{\langle  \ddual_{\lambda'}, \varphi_\lambda \rangle}{\sqrt{\omega_\lambda}}
    = \int_{\Lambda} C_\varphi f(\lambda')  \frac{\langle \ddual_{\lambda'}, \varphi_\lambda \rangle}{ \sqrt{\omega_\lambda\omega_{\lambda'}}}\, d\mu_\varphi (\lambda')
    \,,\qquad \lambda\in\Lambda\,.
\end{align*}
 So, $\hh_\varphi$ is a RKHS with kernel
\begin{equation*}
    K_\varphi(\lambda,\lambda') =  \frac{\langle \ddual_{\lambda'}, \varphi_\lambda \rangle}{\sqrt{\omega_\lambda\omega_{\lambda'}} }\,,
\end{equation*}
and orthogonal projection $P_\varphi:L^2(\Lambda,\mu_\varphi)\to \hh_\varphi$ given by $P_\varphi= C_\varphi C_{\ddual}^*$.

\noindent {\bf Step 3} \emph{(Associated concentration operator)}.
Let us consider the concentration operator $T_{\varphi,\Omega}:L^2(\Lambda,\mu_\varphi)\to L^2(\Lambda,\mu_\varphi)$ defined by
\begin{align*}
T_{\varphi,\Omega} v = P_\varphi \big( 1_{\Lambda\cap \Omega} \cdot P_\varphi v \big) \,, \qquad v \in L^2(\Lambda,\mu) \,.
\end{align*}
We can decompose $L^2(\Lambda,\mu_\varphi) = \hh_\varphi \bigoplus \hh_\varphi^\perp$. By \eqref{eq_fm} we have
\begin{align*}
    T_{\varphi,\Omega} =  \begin{bmatrix}
                    C_\varphi  M_{\varphi,\Omega}  C_{\ddual}^*  & 0\\
                    0 & 0
                \end{bmatrix}\,.
\end{align*}
From here it is easy to check that $M_{\varphi,\Omega}$ and $T_{\varphi,\Omega}$ have the same non-zero eigenvalues. 

\noindent {\bf Step 4} \emph{(Concentration estimate)}.
Let us check that Theorem~\ref{th:general_bound} and Corollary~\ref{lemma:exp} apply to $T_{\varphi,\Omega}$ and consequently to $M_{\varphi,\Omega}$.

It is clear that $\Lambda$ is a locally compact metric space with the Euclidean distance $\dd$ inherited from $\R^{d}$ and $\mu_\varphi$ is finite on compact sets. {As shown in \eqref{eq:countingdoubling}},
$\mu_\varphi$ satisfies \ref{doubling} with constant $\le 5^d \Mf/\mf$.

     On the other hand, Condition~\ref{stripe} follows from Lemma~\ref{th:gabor_distance}. 
     Regarding \ref{norm2}, 
\begin{equation*}
    \lVert K_{\lambda} \rVert_{L^2(\Lambda,\mu_\varphi)}^2 
    = \omega_\lambda^{-1}\sum_{\lambda' \in\Lambda}  |\langle \ddual_\lambda, \varphi_{\lambda'} \rangle |^2 =\omega_\lambda^{-1} \langle S \ddual_\lambda,\ddual_\lambda\rangle =1\,,\qquad \lambda\in\Lambda\,.
\end{equation*}
Recall from \eqref{eqn} that we denote $A_{n,\lambda}=B_{2^n}(\lambda)\smallsetminus B_{2^{n-1}}(\lambda)$ for $n\ge 1$, $A_{0,\lambda}=B_{1}(\lambda)$ and
\begin{align*}
    N(s) &=    \sum_{n\ge 0}\sup_{\lambda \in \Lambda}  \sum_{\lambda'\in A_{n,\lambda}} (1+ |\lambda-\lambda'|)^{s}  \frac{ |\langle \ddual_{\lambda'}, \varphi_\lambda \rangle|^2}{\omega_\lambda \omega_{\lambda'}}\omega_{\lambda'}
    \\ &\le \Mf \sum_{n\ge 0}\sup_{\lambda \in \Lambda}  \sum_{\lambda'\in A_{n,\lambda}} (1+ |\lambda-\lambda'|)^{s} u(\lambda-\lambda')^2
    \\ &\le \Mf   \sum_{\lambda\in \Lambda} (1+ |\lambda|)^{s} u(\lambda)^2
    \,.
\end{align*}
By Theorem~\ref{th:general_bound}, we see that for any $s\ge 1$,
 \begin{multline*}
        \big|  \#\{\lambda \in \sigma(M_{\varphi,\Omega}):\  \lambda > \delta\} - {   } \mu_\varphi(\Omega) \big| \notag
       \\ \lesssim \isof_\Lambda  \cdot \frac{\HH(\partial \Omega)}{\kappa\, \min\{1,\eta^{d-1}\}} \cdot \frac{\Mf^8}{\mf^7 \,|\Lambda|} \cdot \big(\tau  N(s)\big)^{\frac{d}{s+d-1}}\Big(\log^\ast\Big(\big(\tau  N(s)\big)^{\tfrac{1}{s+d-1}}\Big)\Big)^{\tfrac{s-1}{s+d-1} }\,.
    \end{multline*} 
Similarly, by Corollary~\ref{lemma:exp} and \eqref{eq_mlml1},
    \begin{multline*}
        \big|  \#\{\lambda \in \sigma(M_{\varphi,\Omega}):\  \lambda > \delta\} - {   } \mu_\varphi(\Omega) \big| \notag \\
       \lesssim \isof_\Lambda  \cdot \frac{\HH(\partial \Omega)}{\kappa\, \min\{1,\eta^{2d-1}\}} \cdot \frac{\Mf}{|\Lambda|} \cdot \big(\log^\ast(\tau D)\big)^{ 2d{\beta}} \cdot\log^\ast\big(\log^\ast(\tau D)\big)\,,
    \end{multline*}
    where, similar to before,
    \begin{align*}
        D&=\sup_{\lambda \in \Lambda} \sum_{\lambda' \in \Lambda} e^{\alpha |\lambda-\lambda'|^{1/\beta}} \frac{|\langle \ddual_{\lambda'},\varphi_\lambda\rangle|^2}{\omega_\lambda\omega_{\lambda'}}\omega_{\lambda'}
   \le  \Mf \sum_{\lambda \in \Lambda} e^{\alpha |\lambda|^{1/\beta}} u(\lambda)^2 \,. \qedhere
    \end{align*}    
\end{proof}

\begin{remark}\label{rem_tight}
If $\{\varphi_\lambda\}_{\lambda\in\Lambda}$ is a tight frame  (that is, $a=b$ in the frame inequality \eqref{eqframe}) consisting of unit-norm elements, then Theorem~\ref{th:frame_conc} simplifies and yields a bound concerning the spectral deviation from a multiple of $|\Omega|$. Indeed, the dual frame is simply $\ddual_\lambda=a^{-1}\varphi_\lambda$, which gives $\omega_\lambda\equiv a^{-1}$, and,
       \begin{align*}
        \big|  \#\{\lambda \in \sigma &(M_{\varphi,\Omega}):\  \lambda > \delta\} - (|\Lambda|a)^{-1}|\Omega| \big|
         \\ &\leq  \big|  \#\{\lambda \in \sigma(M_{\varphi,\Omega}):\  \lambda > \delta\} - {   } \mu_\varphi(\Omega) \big|+\big|\mu_\varphi(\Omega)-(|\Lambda|a)^{-1}|\Omega|\big|
         \\ &=  \big|  \#\{\lambda \in \sigma(M_{\varphi,\Omega}):\  \lambda > \delta\} - {   } \mu_\varphi(\Omega) \big|+(|\Lambda|a)^{-1}\big|\mu_\Lambda(\Omega)-|\Omega|\big|\,,\qedhere
         \end{align*}
which, combined with \eqref{eq:comp-Omega-Omega_Lambda} and Theorem \ref{th:frame_conc}\ref{tfi}, gives
\begin{align*}
        \big|  \#\{\lambda \in \sigma(M_{\varphi,\Omega}):\  \lambda > \delta\} - (a|\Lambda|)^{-1}  |\Omega| \big| \notag
       &\lesssim 
       \begin{multlined}[t]
       \isof_\Lambda 
       \cdot\frac{\HH(\partial \Omega)}{\kappa\, \min\{1,\eta^{d-1}\}} \cdot  (a|\Lambda|)^{-1} 
       \\ \cdot  \big(\tau  N(s)\big)^{\frac{d}{s+d-1}}\cdot\Big(\log^\ast\Big(\big(\tau  N(s)\big)^{\tfrac{1}{s+d-1}}\Big)\Big)^{\tfrac{s-1}{s+d-1} }
    \,,
     \end{multlined}
\end{align*}
    with $N(s)$ as before. The same simplification applies to Theorem \ref{th:frame_conc}\ref{tfii}.
Note that for typical examples (in particular when the vectors $\varphi_\lambda$ are obtained by sampling  a continuous tight frame indexed by $\R^d$) one expects the frame bound $a$ to scale like $|\Lambda|^{-1}$.   
\end{remark}

\section{Examples and applications in time-frequency analysis}\label{sec:application}

\subsection{Time-frequency concentration operators}\label{sec-stft}
Recall the time-frequency filter $\A^g$ from \eqref{eq_tf_dau}.
Heuristically, $\A^g$ is approximately a projection onto the space of functions whose short-time Fourier transforms are mainly localized on $\Omega$. Theorem~\ref{th:general_bound} and Corollary~\ref{lemma:exp}, which essentially recover the main results of \cite{MR}, help validate such 
intuition.

Precisely, we consider the $\hh_g=V_g(L^2(\mathbb{R}^d))\subseteq L^2(\mathbb{R}^{2d})$, which is a RKHS with kernel
\begin{align*}
K_g(z,w)=
V_gg (z-w) e^{2\pi i (\xi'-\xi) x'}\,, \qquad z=(x,\xi)\,, w=(x',\xi') \in \mathbb{R}^d \times \mathbb{R}^d\,.
\end{align*}
The STFT is an isometric isomorphism
$V_g: L^2(\mathbb{R}^{d}) \to \hh_g$, and,
with respect to the decomposition $L^2(\mathbb{R}^{2d}) = \hh_g \bigoplus \hh_g^\perp$, the Toeplitz and localization operators are related by
\begin{align}\label{eq_conj}
T^g_\Omega F = P_{\hh_g} 1_\Omega P_{\hh_g} = \begin{bmatrix}
V_g  \A^g  V_g^* F  & 0\\
0 & 0
\end{bmatrix},
\end{align}
see, e.g., \cite{dMFeNo, MR1882695, MR2452833}. Thus, the spectrum of $T_\Omega^g$ and
$\A^g$ coincide except for the multiplicity of the eigenvalue $\lambda=0$.

Suppose that $\Omega\subset\R^{2d}$ has regular boundary at scale $\eta$ with constant $\kappa$ and let us consider different kinds of decay of the window function $g$.

\subsubsection{Gelfand-Shilov window classes}
First, we discuss the case where $g$ belongs to the so-called \emph{Gelfand-Shilov class} $\mathcal{S}^{\beta,\beta}(\mathbb{R}^d)$, i.e., there exist constants $c,C>0$ such that the following decay and smoothness conditions hold:
\begin{align*}
|g(x)| \leq C e^{-c |x|^{1/\beta}}, \qquad |\hat{g}(\xi)| \leq C e^{-c |\xi|^{1/\beta}}, \qquad x,\xi \in \mathbb{R}^d\,.
\end{align*}
Equivalently, 
\begin{align}\label{eq_gsc2}
|V_gg(z)| \leq B e^{-\alpha |z|^{1/\beta}}\,,
\end{align}
for constants $B,\alpha>0$ ---
see \cite{MR3469849, ChChKi}, \cite[Theorem~3.2]{MR1732755} or \cite[Corollary 3.11 and Proposition 3.12]{MR2027858}, which gives the kernel estimate
$|K(z,w)|=|V_gg(z-w)| \leq B e^{-\alpha |z-w|^{1/\beta}}$.

The geometric hypotheses for Theorem~\ref{th:general_bound} and Corollary~\ref{lemma:exp} are easily checked. This is done in a slightly more general setting in Lemma~\ref{lemmix} below, so we omit it here. Applying Lemma~\ref{lemmix}, Corollary~\ref{lemma:exp} and \eqref{eq_gsc2}, it follows that
\begin{align}
\label{eqc13}
        \big|  \#\{\lambda \in \sigma(T^g_{\Omega}):\  \lambda > \delta\} - {   } |\Omega| \big| 
       \lesssim \frac{\HH(\partial \Omega)}{\kappa\, \min\{1,\eta^{2d-1}\}}  \big(\log(\tau )\big)^{ 2d{\beta}} \log^\ast\big(\log(\tau  )\big)\,,
\end{align}
where $\delta\in(0,1)$ and $\tau \coloneqq \max\{\frac{1}{\delta},\frac{1}{1-\delta}\}$. This essentially recovers \cite[Theorem 1.1]{MR}. (The cited result provides a slightly sharper dependence on $\beta$
when $\Omega$ is subject to increasing dilations. To maintain the conciseness of the present paper, we omit a more detailed discussion of this refinement.)

\subsubsection{Polynomial decay}
Second, we turn our attention to windows $g$ such that $|V_g g|$ has polynomial decay. For $s\ge 0$ and $1\le p<\infty$ and a non-zero Schwartz window $\varphi\in\mathcal{S}(\R^d)$, define the modulation space $M^p_s(\R^d)$ as the space of tempered distributions $f\in \mathcal{S}'(\R^d)$ just that
    \begin{align}\label{eqmod}
        \|f\|_{M^p_s}&=\Big(\int_{\R^{2d}} (1+|z|)^{ps} |V_\varphi f(z)|^p\, dz\Big)^{1/p}<\infty\,.
    \end{align}
The choice of the window is not significant since different windows lead to equivalent norms (see 
\cite{benyimodulation} or \cite[Proposition 11.3.2]{grbook}). For $s=0$, we just write $M^p(\R^d)$. 

We now use Theorem~\ref{th:general_bound} together with Lemma~\ref{lemmix} to recover \cite[Theorem 1.4]{MR} (with a slightly more practical formulation in terms of modulation norms).

\begin{proposition}
   Let $g \in L^2(\mathbb{R}^d)$ with $\|g\|_2=1$ and $\Omega\subset\R^{2d}$ a compact set with regular boundary at scale $\eta$ and constant $\kappa$. Consider the operator $T^g_{\Omega}$ defined in \eqref{eq_conj}. If $g\in M_{s}^{4/3}(\R^d)$ for some $s\ge \tfrac{1}{2}$, then, for $\delta\in(0,1)$,
\begin{multline*}
        \big|  \#\{\lambda \in \sigma(T^g_{\Omega}):\  \lambda > \delta\} - {   } |\Omega| \big| \\
       \lesssim 
          \frac{\HH(\partial \Omega)}{\kappa\, \min\{1,\eta^{2d-1}\}}  \cdot  \big(\tau \|g\|_{M_{s}^{4/3}}^4\big)^{\frac{2d}{2s+2d-1}}\cdot \Big(\log^\ast\Big((\tau \|g\|_{M_{s}^{4/3}}^4)^{\tfrac{1}{2s+2d-1}}\Big)\Big)^{\tfrac{2s-1}{2s+2d-1} }\,,
    \end{multline*} 
    where $\tau \coloneqq \max\{\frac{1}{\delta},\frac{1}{1-\delta}\}$.
\end{proposition}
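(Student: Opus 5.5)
The plan is to apply the doubling version \eqref{eq:last_common2} of Theorem~\ref{th:general_bound} on $X=\R^{2d}$ with Lebesgue measure and Euclidean distance. We take $\gamma=2d$ and the exponent $2s$ in place of $s$, with $2s\ge 1$ because $s\ge\tfrac12$. The operator is $T^g_\Omega$. By \eqref{eq_conj}, its nonzero spectrum coincides with that of $\A^g$.

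The geometric hypotheses are handled as follows, in the way the paper defers to Lemma~\ref{lemmix}.
\begin{enumerate}
\item Condition~\ref{norm2} holds because $\|K_z\|_{L^2}=\|V_g g\|_{L^2(\R^{2d})}=\|g\|_2^2=1$, by the orthogonality relations.
\item Condition~\ref{doubling} holds with $C_X=2^{2d}$.
\item Condition~\ref{stripe} holds with $\gamma=2d$ and $\ii(\Omega)\lesssim \HH(\partial\Omega)/(\kappa\min\{1,\eta^{2d-1}\})$. This follows from \eqref{eqred2} of Proposition~\ref{reduct}, since $\R^{2d}$ is $1$-quasiconvex, taking $R=2^n$.
\item Proposition~\ref{peri} gives $\pp(\Omega)\lesssim\HH(\partial\Omega)$, since $\R^{2d}$ is a doubling Poincaré space.
\end{enumerate}
Together, $\max\{\ii(\Omega),\pp(\Omega)\}\lesssim \HH(\partial\Omega)/(\kappa\min\{1,\eta^{2d-1}\})$, using $\kappa\lesssim 1$ to absorb the perimeter term.

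Next, the group structure of Section~\ref{sec_simod} applies: translations together with a phase factor preserve $|K|$. This gives
\[
N(2s)=\int_{\R^{2d}}(1+|z|)^{2s}|V_gg(z)|^2\,dz=\big\|(1+|\cdot|)^{s}V_gg\big\|_{L^2}^2 .
\]
The key analytic step is the bound $\|(1+|\cdot|)^sV_gg\|_{L^2}\lesssim \|g\|_{M^{4/3}_s}^2$, which gives $N(2s)\lesssim\|g\|_{M^{4/3}_s}^4$. To prove it, I would use the standard change-of-window pointwise estimate
\[
|V_gg|\le |V_\varphi g|*|V_g\varphi|,
\]
with a normalized Schwartz $\varphi$ (up to a reflection that does not affect norms). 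Then the submultiplicativity $(1+|z|)^s\le(1+|z-w|)^s(1+|w|)^s$ and Young's inequality with $\tfrac34+\tfrac34=1+\tfrac12$ give
\[
\|(1+|\cdot|)^sV_gg\|_2\le \|V_\varphi g\|_{L^{4/3}_s}\,\|V_g\varphi\|_{L^{4/3}_s}.
\]
Finally $|V_g\varphi(z)|=|V_\varphi g(-z)|$, so both factors equal $\|g\|_{M^{4/3}_s}$. I expect this to be the main (though routine) obstacle: the reflection and conjugation identities and the window-independence constant must be tracked carefully so that only $\|g\|_{M^{4/3}_s}^4$ appears, with constants depending on $d,s$ only.

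Plugging into \eqref{eq:last_common2}, with exponents $\frac{\gamma}{2s+\gamma-1}=\frac{2d}{2s+2d-1}$ and $\frac{2s-1}{2s+2d-1}$, gives the claim. The monotonicity of $t\mapsto t^{a}\log^\ast(t^{b})^{c}$ lets us replace $N(2s)$ by the larger quantity $C\|g\|^4_{M^{4/3}_s}$. Since the infimum in \eqref{eq:last_common2} is bounded by its value at the chosen exponent, no optimization is needed. The implied constant is $O(C_X^4)=O(2^{8d})$ times the dimensional constants.
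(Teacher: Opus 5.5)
Your proposal is correct and follows the paper's proof essentially step for step. You verify the hypotheses exactly as in Lemma~\ref{lemmix}, apply the doubling bound \eqref{eq:last_common2} with exponent $2s\ge 1$ and $\gamma=2d$, and obtain $N(2s)\le\|g\|_{M^{4/3}_s}^4$ from $|V_gg|\le|V_\varphi g|\ast|V_g\varphi|$, Young's inequality with exponents $(\tfrac43,\tfrac43;2)$, and $|V_g\varphi(z)|=|V_\varphi g(-z)|$. Your remark that absorbing $\pp(\Omega)\lesssim\HH(\partial\Omega)$ into $\HH(\partial\Omega)/(\kappa\min\{1,\eta^{2d-1}\})$ needs $\kappa\lesssim 1$ makes explicit a step the paper leaves implicit in Lemma~\ref{lemmix}(iii). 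That bound does hold, with a dimensional constant, by the Hausdorff density theorem whenever $\HH(\partial\Omega)<\infty$.
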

\begin{proof}
    As mentioned, we postpone checking the hypotheses of Theorem~\ref{th:general_bound} to Lemma~\ref{lemmix}, where this is done in a more general setting.

By Lemma~\ref{lemmix} and Theorem~\ref{th:general_bound} applied to $2s\ge 1$,
 \begin{multline*}
        \big|  \#\{\lambda \in \sigma(T^g_{\Omega}):\  \lambda > \delta\} - {   } |\Omega| \big| \notag
       \\ \lesssim  \frac{\HH(\partial \Omega)}{\kappa\, \min\{1,\eta^{2d-1}\}} \big(\tau  N(2s)\big)^{\frac{2d}{2s+2d-1}}\Big(\log^\ast\Big(\big(\tau  N(2s)\big)^{\tfrac{1}{2s+2d-1}}\Big)\Big)^{\tfrac{2s-1}{2s+2d-1} }\,.
    \end{multline*}
     It suffices to estimate the quantity $N(2s)$. We compute the $M^2_s$-norm using the $L^2$-normalized Gaussian window $\varphi(x)=2^{d/4}e^{-\pi|x|^2}$.
        By \cite[Lemma 11.3.3]{grbook} we have $|V_gg|\le |V_\varphi g|\ast |V_g\varphi|$. This together with Young's convolution inequality gives
\begin{align*}
    N(2s)&=\int_{\R^{2d}} (1+|z|)^{2s} |V_gg(z)|^2 \, dz
    \\ &\le \int_{\R^{2d}} \Big( \int_{\R^{2d}} (1+|z-w|)^{s} |V_\varphi g(z-w)| (1+|w|)^{s} |V_g\varphi(w)|\, dw\Big)^2 \, dz  \notag
       \le   \|g\|_{M^{4/3}_{s}}^{4}\,, \notag
\end{align*}
where we used that $|V_g \varphi (z)|=|V_\varphi g(-z)|$.
\end{proof}

 \subsection{Gabor multipliers} 
 
 \label{secgab}
 
With the notation of Section~\ref{sec:application}, let $g \in L^2(\mathbb{R}^d)$ and $\Lambda \subset \mathbb{R}^{2d}$ a full-rank lattice. Also for $(x,\xi)\in \R^{2d}$ denote the \emph{time-frequency shift} operator by
\begin{align*}
\pi(x,\xi) g(t) = g(t-x) e^{2\pi i \xi t}\,\qquad t\in \R^{d}\,.
\end{align*}
We say that the \emph{Gabor system}
\begin{align}
\mathcal{G}(g,\Lambda)
=\{\pi(\lambda) g: \lambda \in \Lambda\}
\end{align}
is a frame of $L^2(\mathbb{R}^d)$ if the \emph{frame operator} $S_g:L^2(\R^d)\to L^2(\R^d)$,
\[ 
S_{g}f=\sum_{\lambda\in\Lambda}\langle f,\pi(\lambda)g\rangle \pi(\lambda)g
\,,\qquad f\in L^2(\R^d)\,,
\]
is invertible on $L^2(\mathbb{R}^d)$. In this case, the function $\dual=S_g^{-1} g$ is known as the \emph{canonical dual window} of $g$ and provides the following expansion: every function $f \in L^2(\mathbb{R}^d)$ can be represented as a norm-convergent series
\begin{align}\label{eq_tf_exp}
f = \sum_{\lambda \in \Lambda} \langle f, \pi(\lambda) g \rangle \pi(\lambda)\dual\,. 
\end{align}
One of the most important results in Gabor theory is that if $g\in M^1(\mathbb{R}^d)$, then $\dual \in M^1(\mathbb{R}^d)$ \cite[Section 4.2]{wiener}, which intuitively means that the coefficients in the expansion \eqref{eq_tf_exp} reflect the time-frequency profile of the function $f$. See \cite[Chapter 5]{grbook} for more background on Gabor frames.

Let $\Omega \subset \mathbb{R}^{2d}$ and consider the \emph{Gabor multiplier}
\begin{align}\label{eq_gm}
M_{g,\Lambda,\Omega} f = \sum_{\lambda \in \Lambda \cap \Omega} \langle f, \pi(\lambda) g \rangle \pi(\lambda)\dual\,,
\end{align}
which is an approximation of the concentration operator \eqref{eq_tf_dau}. 
We are interested in spectral properties of $M_{g,\Lambda,\Omega}$ that are uniform with respect to the lattice $\Lambda$. 

\begin{remark}
    \label{rem_gm}
    The quantities involved in Theorem~\ref{th:frame_conc} simplify in the case of Gabor multipliers. Indeed, if we invoke the so-called Ron-Shen duality (also known as Wexler-Raz relations, see \cite[Theorem 7.3.1]{grbook}) we see that 
$\langle g, \dual \rangle = |\Lambda|\,.
$ 
This shows that   $\Mg=\mg=|\Lambda|$ as well as $\mu_g=\mu_\Lambda$. In particular,  $C_\varphi=1,$
    \[N(s)=|\Lambda| \sum_{\lambda\in\Lambda}(1+|\lambda|)^s\big|\big\langle \tfrac{\dual}{|\Lambda|},\pi(\lambda)g\big\rangle\big|^2\,,\qquad \text{and}\qquad
D= |\Lambda| \sum_{\lambda \in \Lambda} e^{\alpha |\lambda|^{1/\beta}} \big|\big\langle \tfrac{\dual}{|\Lambda|},\pi(\lambda)g\big\rangle\big|^2
       \,.\]     
       Moreover, as in Remark~\ref{rem_tight} one may apply \eqref{eq:comp-Omega-Omega_Lambda} to measure the spectral deviation from the Lebesgue measure of $\Omega$ instead of $\mu_\Lambda(\Omega)$:\begin{equation*}
        \big|  \#\{\lambda \in \sigma(M_{g,\Lambda,\Omega}):\  \lambda > \delta\} - |\Omega| \big|\leq  \big|  \#\{\lambda \in \sigma(M_{g,\Lambda,\Omega}):\  \lambda > \delta\} - {   } \mu_\Lambda(\Omega_\Lambda) \big|+\big|\mu_\Lambda(\Omega_\Lambda)-|\Omega|\big|.
         \end{equation*}  
\end{remark}

Note that, up to this point, the quantities $N(s)$ and $D$ depend on both $g$ and its canonical dual window. In what follows, we show that for sufficiently fine discretizations of the phase space, these conditions can be simplified so that they depend only on $g$, making the result essentially compatible with the continuous setting.

\begin{theorem}\label{cor:gabor}
Let $g \in L^2(\mathbb{R}^d)$ with $\|g\|_2=1$, $\Lambda = A\Z^{2d}$ a lattice with $A\in \gl_{2d}(\R)$ and $\Omega\subset\R^{2d}$ a compact set with regular boundary at scale $\eta$ and constant $\kappa$. Let $\|A\|$ be the spectral norm of $A$ and $\vr$ its condition number. Suppose that the Gabor system $\mathcal{G}(g,\Lambda)$ is a frame of $L^2(\mathbb{R}^d)$ and consider the Gabor multiplier $M_{g,\Lambda,\Omega}$ defined in \eqref{eq_gm}. The following statements hold:
\begin{enumerate}[label=(\roman*)] 
\item\label{cgi} Suppose that $g\in M_{s}^1(\R^d)$ for some $s\ge \tfrac{1}{2}$. If $\|A\|<\sigma$ for a sufficiently small $0<\sigma=\sigma(g,s,d)\le 1$, then
\begin{align*}
        \big|  \#\{\lambda &\in \sigma(M_{g,\Lambda,\Omega}):\  \lambda > \delta\} - {   } |\Omega| \big| \notag
       \lesssim \begin{multlined}[t]
           \vr^{4d}
       \cdot\frac{\HH(\partial \Omega)}{\kappa\, \min\{1,\eta^{2d-1}\}} \cdot s^d \log^\ast(s) \\ \cdot  \big(\tau \|g\|_{M_{s}^1}^4\big)^{\frac{2d}{2s+2d-1}}\Big(\log^\ast\Big((\tau \|g\|_{M_{s}^1}^4)^{\tfrac{1}{2s+2d-1}}\Big)\Big)^{\tfrac{2s-1}{2s+2d-1} }\,,
       \end{multlined}
    \end{align*}
\item\label{cgii} Suppose that $g$ satisfies the Gelfand-Shilov condition \eqref{eq_gsc2} with parameters $\beta\ge 1$ and $\alpha,B>0$. If $\|A\|<\sigma$ for a sufficiently small $0<\sigma=\sigma(B,\alpha,\beta,d)\le 1$, then 
\begin{align*}
        \big|  \#\{\lambda \in \sigma(M_{g,\Lambda,\Omega}):\  \lambda > \delta\} - {   } |\Omega| \big| \notag
       \lesssim \vr^{4d}\cdot \frac{\HH(\partial \Omega)}{\kappa\, \min\{1,\eta^{2d-1}\}} \cdot  \big(\log^\ast(B\tau )\big)^{ 2d{\beta}} \log^\ast\big(\log^\ast(B\tau) \big)\,.
    \end{align*}
\end{enumerate}  
\end{theorem}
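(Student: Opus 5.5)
The plan is to apply Theorem~\ref{th:frame_conc} to the frame $\varphi_\lambda=\pi(\lambda)g$. By Remark~\ref{rem_gm} this frame has $\Mg=\mg=|\Lambda|$, $C_\varphi=1$ and $\mu_g=\mu_\Lambda$. The only genuinely new task is to bound the quantities $N(s)$ and $D$ from Remark~\ref{rem_gm}, which involve the normalized dual window $h:=\dual/|\Lambda|$, by quantities depending on $g$ alone, uniformly for all sufficiently fine lattices.

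\textbf{Step 1: the other factors.} Here $\Omega\subset\R^{2d}$, so $\isof_\Lambda=\vr^{4d}\max\{1,\|A\|^{2d}\}=\vr^{4d}$ when $\|A\|\le1$, and the prefactor $\Mf/|\Lambda|$ equals $1$. The passage from $\mu_\Lambda(\Omega_\Lambda)$ to $|\Omega|$ uses \eqref{eq:comp-Omega-Omega_Lambda}, whose error $\lesssim\|A\|\,\HH(\partial\Omega)/(\kappa\min\{1,\eta^{2d-1}\})$ is dominated by the main term, since the main term is at least a constant times the same geometric factor.

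\textbf{Step 2: comparing $h$ with $g$.} I would write $N(s)=|\Lambda|\sum_{\lambda\in\Lambda}(1+|\lambda|)^s|V_g h(\lambda)|^2$. The key input is that the Gabor frame operator, normalized as $|\Lambda|S_g$, converges to the identity as $\|A\|\to0$ in operator norm on weighted modulation spaces $M^1_s$. This follows from Janssen's representation $|\Lambda| S_g=\sum_{\mu\in\Lambda^\circ}\langle g,\pi(\mu)g\rangle\pi(\mu)$ over the adjoint lattice. As $\|A\|\to0$ every nonzero $\mu\in\Lambda^\circ$ has $|\mu|\gtrsim\|A\|^{-1}$, so the tail $\sum_{\mu\ne0}|V_gg(\mu)|(1+|\mu|)^s$ tends to $0$ when $g\in M^1_{s}$ (in practice $M^1_{2s}$ or $M^1_{s'}$ with a small loss). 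A Neumann series then gives $\|h-g\|_{M^1_s}\lesssim\epsilon(\|A\|)\|g\|_{M^1_s}$, and for $\|A\|<\sigma$ we get $\|h\|_{M^1_s}\le2\|g\|_{M^1_s}$.

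\textbf{Step 3: sampling bound.} Next I need to control $|\Lambda|\sum_\lambda(1+|\lambda|)^s|V_gh(\lambda)|^2$ by $\int(1+|z|)^s|V_gh(z)|^2dz$ plus an error, using a local maximal-function or Wiener amalgam estimate for $V_gh$ on the fine lattice. This is where the $\vr$ dependence and the factor $s^d\log^\ast(s)$ enter: the amalgam constant for a weight $(1+|z|)^s$ on a cell of size $\|A\|\le1$ costs a polynomial factor in $s$. Young's inequality, as in the previous proposition, then gives $N(2s)\lesssim C(s)\|g\|_{M^1_s}^4$. With $\tau$ absorbing constants, Theorem~\ref{th:frame_conc}\ref{tfi} yields \ref{cgi}.

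\textbf{Step 4: the Gelfand--Shilov case.} For \ref{cgii} I would run the same Neumann argument in an exponentially weighted space, i.e.\ weighted $\ell^1$ sums with weight $e^{\alpha'|\mu|^{1/\beta}}$, which is submultiplicative for $\beta\ge1$. This gives $|V_gh(z)|\lesssim Be^{-\alpha'|z|^{1/\beta}}$ for some $\alpha'<\alpha$, hence $D\lesssim B^2$ for a smaller exponent, and then Theorem~\ref{th:frame_conc}\ref{tfii} applies. The log of $B^2\tau$ is comparable to that of $B\tau$.

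I expect the main obstacle to be Step 2 with explicit control: establishing invertibility of $|\Lambda|S_g$ on the weighted algebra uniformly for fine, possibly eccentric lattices. For eccentric lattices the adjoint lattice's shortest vector scales like $\|A^{-1}\|$ rather than $\|A\|^{-1}$, so I would reduce to a condition on $\|A\|$ via $|\mu|\ge\|A\|^{-1}$ for $\mu\in\Lambda^\circ\setminus\{0\}$. The remaining difficulty is keeping the weight bookkeeping compatible with the Janssen sum.
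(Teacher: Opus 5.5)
Your proposal is correct and follows the paper's proof. The main steps are Theorem~\ref{th:frame_conc} with Remark~\ref{rem_gm} (so $C_\varphi=1$ and $\isof_\Lambda=\vr^{4d}$), then Janssen's representation of $|\Lambda|S_g-\id$ over $\Lambda^\circ\setminus\{0\}$, whose points satisfy $|\lambda^\circ|\ge\|A\|^{-1}$ regardless of eccentricity, then a Neumann series in $M^1_s$ (resp.\ in $\|e^{\frac{\alpha}{2}|\cdot|^{1/\beta}}V_g f\|_1$, using submultiplicativity for $\beta\ge1$) giving $\big\||\Lambda|^{-1}\dual\big\|\le 2\|g\|$, and finally a sampling bound for $N(2s)$, resp.\ $D$. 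The only real difference is the sampling step: instead of a maximal-function/amalgam estimate, the paper writes $|V_g f|\le|V_\varphi f|\ast|V_g\varphi|\ast|V_\varphi\varphi|$ with a Gaussian $\varphi$ and uses Cauchy--Schwarz so that the lattice sum falls entirely on $V_\varphi\varphi$, which yields $N(2s)\lesssim C_d^s s^{s+d}\|g\|_{M^1_s}^4$ (this superexponential constant, not a polynomial one and not any extra $\vr$, is what produces $s^d\log^\ast(s)$), and the same convolution-plus-splitting trick shows the Janssen tail is small for $g\in M^1_s$ itself, so no loss to $M^1_{2s}$ is needed.
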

\begin{proof}
    First, let us mention that the frame condition is not a real imposition since it is satisfied for sufficiently small $\sigma$ by Janssen's criterion. Regarding \ref{cgi}, apply Theorem~\ref{th:frame_conc} \ref{tfi} and Remark~\ref{rem_gm} with parameter $2s\ge 1$. It suffices to estimate the quantity $N(2s)$. We compute the modulation space norm \eqref{eqmod} using the $L^2$-normalized Gaussian window $\varphi(x)=2^{d/4}e^{-\pi|x|^2}$ and for $1\le p<\infty$, $f\in \mathcal{S}'(\R^d)$ we also write
    \begin{align*}
         \|f\|_{M^p_{s,\Lambda}}=\Big( \sup_{z\in AQ} |\Lambda| \sum_{\lambda\in\Lambda} (1+|\lambda + z|)^{ps}|V_\varphi f(\lambda +z)|^{p}\Big)^{1/p}\,,
    \end{align*}
    where $Q=[-\tfrac{1}{2},\tfrac{1}{2}]^{2d}$. 
    By \cite[Lemma 11.3.3]{grbook} we have $|V_gf|\le |V_\varphi f|\ast |V_g\varphi|\ast|V_\varphi \varphi|$. This together with Cauchy-Schwarz inequality gives
\begin{align}\label{eqconv}
    N(2s)&\hspace{-1pt}=\hspace{-1pt}|\Lambda| \sum_{\lambda\in\Lambda}(1+|\lambda|)^{2s}|V_g \tfrac{\dual}{|\Lambda|}(\lambda)|^2
    \le |\Lambda| \sum_{\lambda\in\Lambda}(1+|\lambda|)^{2s}\big(\big|V_\varphi \tfrac{\dual}{|\Lambda|}\big|\ast|V_g \varphi|\ast|V_\varphi \varphi|(\lambda)\big)^2
    \\ &\hspace{-1pt}\le\hspace{-1pt} |\Lambda| \sum_{\lambda\in\Lambda}\left[\Big((1+|\cdot|)^{s}\big(\big|V_\varphi \tfrac{\dual}{|\Lambda|}\big|\ast|V_g \varphi|\big)\Big)
    \ast\Big((1+|\cdot|)^{s}|V_\varphi \varphi|\Big)(\lambda)\right]^2 \notag
    \\ &\hspace{-1pt}\le \hspace{-1pt}\big\||\Lambda|^{-1}\dual\big\|_{M^{1}_{s}} \|g\|_{M^{1}_{s}} |\Lambda| \sum_{\lambda\in\Lambda}\int_{\R^{2d}}(1\hspace{-.5pt}+\hspace{-.5pt}|z|)^{s}\big|V_\varphi \tfrac{\dual}{|\Lambda|}\big|\ast |V_g \varphi|(z) (1\hspace{-.5pt}+\hspace{-.5pt}|\lambda\hspace{-.5pt} -\hspace{-.5pt}z|)^{2s}|V_\varphi \varphi(\lambda\hspace{-.5pt}-\hspace{-.5pt}z)|^2\, dz \notag
    \\ &
  \hspace{-1pt}  \le \hspace{-1pt}\|\varphi\|_{M^2_{s,\Lambda}}^2 \big\||\Lambda|^{-1}\dual\big\|_{M^{1}_{s}}^2 \|g\|_{M^{1}_{s}}^{2}\,, \notag
\end{align}
where we used that $|V_g \varphi (z)|=|V_\varphi g(-z)|$.

Since $V_\varphi \varphi(z)=e^{-\tfrac{\pi}{2}|z|^2}$ (see \cite[Lemma 1.5.2]{grbook}), a straightforward computation shows that if $\|A\|$ is smaller than an absolute constant, then there exists a constant $C_d>0$ such that
\begin{align}
    \label{eqga}
    \|\varphi\|_{M^2_{s,\Lambda}}^2\lesssim (C_d)^ss^{s+d}\,.
\end{align}
It remains to estimate the norm of $\dual$ in terms of $g$. Define the adjoint lattice associated to $\Lambda$ by
\[\Lambda^\circ=\{\lambda^\circ\in\R^{2d}: \ \pi(\lambda^\circ)\pi(\lambda)=\pi(\lambda)\pi(\lambda^\circ), \text{ for every }\lambda\in\Lambda\}=JA^{-t}\Z^{2d}\,,\]
where
\[J=\begin{pmatrix}
    0 & -\id_d
\\ \id_d & 0
\end{pmatrix}\,.\]
As shown in \cite[Corollary 9.4.5]{grbook}, \begin{align*}
    |\Lambda|S_g-\id=\sum_{\lambda^\circ\in\Lambda^\circ\smallsetminus\{0\}} V_g g(\lambda^\circ) \pi(\lambda^\circ)\,.
\end{align*}
Now notice that by \cite[Lemma 11.1.2]{grbook} and proceeding as in \eqref{eqconv},
\begin{align}\label{eqneu}
   \big\||\Lambda|S_g-\id\big\|_{M^{1}_{s}\to M^{1}_{s}}&\le \sum_{\lambda^\circ\in\Lambda^\circ\smallsetminus\{0\}} |V_g g(\lambda^\circ)| 
\|\pi(\lambda^\circ)\|_{M^{1}_{s}\to M^{1}_{s}}
\\ &
\lesssim \sum_{\lambda^\circ\in\Lambda^\circ\smallsetminus\{0\}} (1+|\lambda^\circ|)^{s}|V_g g(\lambda^\circ)| \notag
\\ &
\lesssim \sum_{\lambda^\circ\in\Lambda^\circ\smallsetminus\{0\}} 
    \int_{\R^{2d}}\int_{\R^{2d}}
(1 + |z|)^{s}|V_\varphi g|(z) 
(1+|w|)^{s}|V_\varphi g|(-w) \notag
\\ &\hspace{3cm}\cdot
(1 + |\lambda^\circ\hspace{-.5pt} - z- w|)^{s}|V_\varphi \varphi(\lambda^\circ - z- w)|\, dz dw\,.
 \notag
\end{align}
For $r>0$, we decompose both integrals
 by splitting each variable's domain into $B_r(0)$ and $B_r(0)^c$. This yields
 \begin{align}
 \big\||\Lambda|S_g-\id\big\|_{M^{1}_{s}\to M^{1}_{s}}
 &\lesssim 
  \|g\|_{M^1_s}
 \int_{B_r(0)^c}
(1\hspace{-.5pt}+\hspace{-.5pt}|z|)^{s}|V_\varphi g|(z) 
\, dz \sup_{z\in\R^{2d}}\sum_{\lambda^\circ\in\Lambda^\circ} (1+|\lambda^\circ-z|)^{s}|V_\varphi \varphi(\lambda^\circ-z)| \notag
\\ &\hspace{3.3cm}+ \|g\|_{M^{1}_{s}}^{2} \sup_{z\in B_{2r}(0)} \sum_{\lambda^\circ\in\Lambda^\circ\smallsetminus\{0\}} (1+|\lambda^\circ-z|)^{s}|V_\varphi \varphi(\lambda^\circ-z)| 
 \notag
\\  &\lesssim 
  C_{d,s}\|g\|_{M^1_s}
 \int_{B_r(0)^c}
(1\hspace{-.5pt}+\hspace{-.5pt}|z|)^{s}|V_\varphi g|(z) 
\, dz \notag
\\& \hspace{3.3cm}+ \|g\|_{M^{1}_{s}}^{2} \sup_{z\in B_{2r}(0)} \sum_{\lambda^\circ\in\Lambda^\circ\smallsetminus\{0\}} (1+|\lambda^\circ-z|)^{s}e^{-\tfrac{\pi}{2}|\lambda^\circ-z|^2} 
<\frac{1}{2} \notag
\,,
\end{align}
where $C_{d,s}>0$ is a constant depending on $d$ and $s$ and for the last step we assume $r$ is sufficiently big and $\|A\|$ is sufficiently small. In particular,
\[\big\||\Lambda|^{-1}\dual\big\|_{M^{1}_{s}}=\big\|(|\Lambda|S_g)^{-1} g\big\|_{M^{1}_{s}}\le
\big\|(|\Lambda|S_g)^{-1}\big\|_{M^{1}_{s}\to M^{1}_{s}}\|g\|_{M^{1}_{s}}\le 2 \| g\|_{M^{1}_{s}} \,.\] 
Joining this with \eqref{eqconv} and \eqref{eqga}, yields
\[N(2s)\lesssim  C_d^ss^{s+d} \|g\|_{M^{1}_{s}}^4\,.\]
Plugging this into Theorem~\ref{th:frame_conc} gives \ref{cgi}.

The proof of \ref{cgii} is quite similar, so we comment on the overall argument and skip the details. We can work directly with $g$ rather than $\varphi$ and proceed as in $\eqref{eqconv}$ to get
\begin{align*}
    D=|\Lambda| \sum_{\lambda \in \Lambda} e^{\alpha |\lambda|^{1/\beta}} \big|\big\langle \tfrac{\dual}{|\Lambda|},\pi(\lambda)g\big\rangle\big|^2
&\lesssim  |\Lambda| \sum_{\lambda\in\Lambda}\left[\Big(e^{\tfrac{\alpha}{2} |\cdot|^{1/\beta}}|V_g \tfrac{\dual}{|\Lambda|}|
   \Big) \ast \Big( e^{\tfrac{\alpha}{2} |\cdot|^{1/\beta}}|V_g g|\Big)(\lambda)\right]^2
    \\ & \lesssim \Big\|e^{\tfrac{\alpha}{2} |\cdot|^{1/\beta}}V_g \tfrac{\dual}{|\Lambda|}\Big\|_1^2 
     \sup_{z\in AQ} |\Lambda| \sum_{\lambda\in\Lambda}  B^2  e^{-\alpha |\lambda-z|^{1/\beta}},
    \\ & \lesssim B^2 \Big\|e^{\tfrac{\alpha}{2} |\cdot|^{1/\beta}}V_g \tfrac{\dual}{|\Lambda|}\Big\|_1^2\,, 
\end{align*}
where in the last step we assume $\|A\|$ is less than a sufficiently small absolute constant and the implied constant depends on $d,\alpha,\beta$. Define
\[\|f\|_{\alpha,\beta}=\big\|e^{\tfrac{\alpha}{2} |z|^{1/\beta}} V_g f\big\|_1\,, \qquad f\in \mathcal{S}'(\R^d)\,.\]
Similarly to \eqref{eqneu}, by \cite[Lemma 11.1.2]{grbook} (notice that for this we need $\beta\ge 1$) and \eqref{eq_gsc2},
\begin{align*}
    \big\|(|\Lambda|S_g-\id) f\big\|_{\alpha,\beta}&\lesssim \| f\|_{\alpha,\beta} \sum_{\lambda^\circ\in\Lambda^\circ\smallsetminus\{0\}} e^{\tfrac{\alpha}{2} |\lambda^\circ|^{1/\beta}} |V_g g(\lambda^\circ)|  
    \\ &\lesssim B\| f\|_{\alpha,\beta} \sum_{\lambda^\circ\in\Lambda^\circ\smallsetminus\{0\}} e^{-\tfrac{\alpha}{2} |\lambda^\circ|^{1/\beta}}   
    <\frac{1}{2}\| f\|_{\alpha,\beta}\,,
\end{align*}
provided $\|A\|$ is sufficiently small. The result now follows from Theorem~\ref{th:frame_conc} and Remark~\ref{rem_gm} as before.
\end{proof}

\begin{proof}[Proof of Corollary \ref{cor_gm_prel}]
    The results follows from Theorem~\ref{cor:gabor} and the fact that in dimension 2 connectedness implies Ahlfors regularity with parameters $1\le \kappa\le 2$ and $\eta=\HH(\partial\Omega)$ (e.g. \cite[Lemma~2.5]{MR2}).
\end{proof}

\section{More applications}
\label{sec_moreappl}
\subsection{Mixed-state localization operators}\label{sec_mixed}

The short-time Fourier transform of a function $f\in L^2(\R^d)$ using an operator window $S\in B(L^2(\R^d))$, given by  
$$
\V_Sf(z)=S^\ast\pi(z)^\ast f\,,\qquad z\in\R^{2d}\,,
$$
was first introduced in \cite{eirik}. Note that this transform is vector-valued, i.e., $\V_Sf(z)\in L^2(\R^d)$ for every $z\in \R^{2d}$.  It turns out that this object shares many properties with the classical short-time Fourier transform, see \cite{operator-stft,eirik}.
In particular, Moyal's identity
\begin{equation}\label{eq:moyal}
\int_{\R^{2d}}\big\langle\V_Sf_1(z),\V_Sf_2(z)\big\rangle \,dz=\langle f_1,f_2\rangle \|S\|_{\S_2}^2\,,
\end{equation}
holds for every $f_1,f_2\in L^2(\R^d)$ and $S\in\S_2$. Let us from now on assume that $\|S\|_{\S_2}=1$. In that case, \eqref{eq:moyal} implies that 
$\hh_S:=\V_S(L^2(\R^d))$ is a reproducing kernel subspace of $L^2(\R^{2d},L^2(\R^d))$ with an operator-valued reproducing kernel
$$
K(z,w)=S^\ast\pi(z)^\ast\pi(w)S\,,
$$
and its associated orthogonal projection $P_S\colon L^2(\R^{2d},L^2(\R^d)) \to \hh_S$ is given by $P_S=\V_S\V_S^*$.
Using the convention $f\otimes g:=\langle\, \cdot\, ,g\rangle f$ to denote the tensor product of two functions, we express $S$ by its singular value decomposition 
 $S=\sum_{n\in\N}\nu_n (g_n\otimes h_n)$, where $\{g_n\}_{n\in\N}$, $\{h_n\}_{n\in\N}$  are two orthonormal families in $L^2(\R^d)$, and  $\sum_n|\nu_n|^2=1$. Note that if $S$ has rank one we get $\V_Sf(z)= \langle f,\pi(z)g\rangle h$, and $K(z,w)=\langle \pi(w)g,\pi(z)g\rangle (h\otimes h)$. In other words, we essentially recover the scalar case from Section~\ref{sec-stft}.

For $m\in L^1(\R^{2d})$ and $R\in \S_1$, define their function-operator convolution $m\star R\in \S_1$ via
$$
m\star R:=\int_{\R^{2d}}m(z)\pi(z)R\pi(z)^\ast \,dz\,.
$$
For a compact set $\Omega\subseteq\R^{2d}$, define the \emph{mixed-state localization operator} $\A^S:L^2(\R^d)\to L^2(\R^d)$ as
\[\A^S=1_\Omega \star( SS^\ast)\,.\]
Using the spectral decomposition of $SS^\ast$, one may write out $\A^S$ explicitly as
$$
\A^S=\V_S^\ast  1_\Omega \V_S=\sum_{n\in\N} |\nu_n|^2 \A^{g_n}\,,
$$
where $\A^g$ is as in \eqref{eq_tf_dau}. In particular, a mixed-state localization operator is a weighted sum of standard localization operators.

Just as in the scalar case, the operator $T_\Omega=P_S1_\Omega P_S$  has the same non-zero eigenvalues as $A_\Omega^S$ --- see \cite[Section 4.3]{operator-stft}. The following lemma allows us to apply Theorem~\ref{th:general_bound} and Corollary~\ref{lemma:exp} to $T_\Omega=P_S 1_\Omega P_S$, and consequently to $A_\Omega^S$.  

\begin{lemma}\label{lemmix}
    Consider $\R^{2d}$ endowed with the Euclidean distance and the Lebesgue measure $\mu$. Suppose $\Omega\subset\R^{2d}$ is a compact set with regular boundary at scale $\eta$ and constant $\kappa$. Then, $\R^{2d}$ is a locally compact convex Poincar\'e space and conditions \ref{norm2}-\ref{doubling}  are satisfied. Moreover,
    \begin{align*}
        &(i) & \gamma&=2d\,;
        \\ &(ii) & C_{\R^{2d}}&=2^{2d}\,;
        \\ &(iii) & \max\{\ii(\Omega),\pp(\Omega)\}&\lesssim  \frac{\HH(\partial \Omega)}{\kappa\, \min\{1,\eta^{2d-1}\}}\,;
         \\ &(iv)& N_{\hh_S}(s)&=\int_{\R^{2d}}(1+|z|)^s\sum_{n,m\in\N}|\nu_n|^2|\nu_m|^2|\langle g_n,\pi(z)g_m\rangle|^2\,dz\,;
         \\ &(v)& D_{\hh_S}&=\int_{\R^{2d}} e^{\alpha |z|^{1/\beta}}\sum_{n,m\in\N}|\nu_n|^2|\nu_m|^2|\langle g_n,\pi(z)g_m\rangle|^2\,dz\,.
    \end{align*}
\end{lemma}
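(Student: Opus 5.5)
The geometric items are read off from the results of Section~\ref{sec_bo}, and the kernel quantities (iv)--(v) come from a direct Hilbert--Schmidt computation using the group structure. First, $\R^{2d}$ with Euclidean distance and Lebesgue measure is locally compact and convex, so $1$-quasiconvex. It is a $1$-Poincar\'e space: this is the classical Poincar\'e inequality on balls (cf.\ the list of examples in \cite[Appendix A]{BB}). Since $|B_{2r}(x)|=2^{2d}|B_r(x)|$, \ref{doubling} holds with $C_{\R^{2d}}=2^{2d}$, which gives (ii) and a doubling dimension $\gamma=\log_2 C_X=2d$, i.e.\ (i).

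Condition~\ref{norm2} follows from Moyal's identity \eqref{eq:moyal}. Writing $K_w(z)=S^*\pi(z)^*\pi(w)S$ and expanding the Hilbert--Schmidt norm along an orthonormal basis $\{e_k\}$, we get
\[
\int\|K_w(z)\|_{\S_2}^2dz=\sum_k\int\|\V_S(\pi(w)Se_k)(z)\|^2dz=\sum_k\|\pi(w)Se_k\|^2=\|S\|_{\S_2}^2=1 .
\]
For \ref{stripe} and (iii), apply Proposition~\ref{reduct}, specifically \eqref{eqred2}, with $M=1$, $\gamma=2d$ and $R=2^n$. The bound there is $R(1+(R/\eta)^{2d-1})\HH(\partial\Omega)/\kappa$, which is at most $2R^{2d}\HH(\partial\Omega)/(\kappa\min\{1,\eta^{2d-1}\})$ for $R\ge1$. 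Multiplying by $2^{-2d(n-1)}$ gives
\[
\ii(\Omega)\lesssim \HH(\partial\Omega)\big/\big(\kappa\min\{1,\eta^{2d-1}\}\big).
\]
For the perimeter, Proposition~\ref{peri} gives $\pp(\Omega)\lesssim\HH(\partial\Omega)$. Since $\min\{1,\eta^{2d-1}\}\le1$ and $\kappa$ is bounded above (regularity at small scales forces $\kappa\lesssim1$ in $\R^{2d}$), this is dominated by the same quantity. That establishes (iii).

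For (iv) and (v), the time-frequency shift structure makes the setting translation-invariant in the sense of Section~\ref{sec_simod}. Indeed, $\pi(z)^*\pi(w)$ equals $\pi(w-z)$ up to a unimodular phase, so $\|K(z,w)\|_{\S_2}=\|S^*\pi(w-z)S\|_{\S_2}$ depends only on $w-z$. Hence $N(s)=\int(1+|z|)^s\|S^*\pi(z)S\|_{\S_2}^2dz$, and likewise for $D_{\hh_S}$; the sup over $x'$ in \eqref{eq:exp} is harmless for the same reason. Inserting the singular value decomposition $S=\sum_n\nu_n\, g_n\otimes h_n$ gives $S^*\pi(z)S=\sum_{n,m}\overline{\nu_n}\nu_m\langle\pi(z)g_m,g_n\rangle\, h_n\otimes h_m$. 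Because $\{h_n\otimes h_m\}$ is orthonormal in $\S_2$, it follows that $\|S^*\pi(z)S\|_{\S_2}^2=\sum_{n,m}|\nu_n|^2|\nu_m|^2|\langle g_n,\pi(z)g_m\rangle|^2$, which yields (iv) and (v).

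The only mildly delicate point is the bookkeeping in (iii): turning \eqref{eqred2} into the normalized constant $\ii(\Omega)$, and handling the case $n=0$ (where $R=1$) uniformly. Everything else is routine.
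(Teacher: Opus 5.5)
Your proposal is correct and takes essentially the same route as the paper. It uses Propositions~\ref{reduct} and \ref{peri} for the geometric items, and Moyal's identity for \ref{norm2}; you apply the operator version \eqref{eq:moyal} summed over a basis rather than the explicit SVD kernel formula, which is an inessential variation. For (iv)--(v) it uses translation invariance of $\|K(z,w)\|_{\S_2}$ together with the singular value decomposition.

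Your remark that $\kappa\lesssim 1$ is needed to absorb Proposition~\ref{peri} into (iii) makes explicit a step the paper leaves implicit. It does hold when $0<\HH(\partial\Omega)<\infty$: a disjoint Vitali family $\{B_r(x_j)\}$ with $x_j\in\partial\Omega$ and $r\le\eta$ gives $\kappa\,\HH_{3r}(\partial\Omega)\le 3^{2d-1}\HH(\partial\Omega)$, and one then lets $r\to 0$.
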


Note that Theorem~\ref{th:general_bound} combined with the parameters as above greatly improves the estimates of \cite[Lemma~4.3]{accumulated-cohen} for $\text{trace}\big(A_\Omega^S-(A_\Omega^S)^2\big)$ that correspond to \eqref{eq:last_common2} when $s=1$ (or more precisely, to \eqref{eq:H} for $p=2$ and $\alpha=1$). On the other hand, if $S=g\otimes h$, then $\A^S=\A^g$ from \eqref{eq_tf_dau}. So, Lemma~\ref{lemmix} applies to the scalar case as mentioned in Section~\ref{sec-stft}.

\begin{proof}[Proof of Lemma~\ref{lemmix}]
    From the singular value decomposition of $S$, we can explicitly compute the kernel
\begin{align*}
K(z,w)=\sum_{n,m\in\N}\nu_n\nu_m\langle \pi(w)g_n,\pi(z)g_m\rangle (h_m\otimes h_n)\,,
\end{align*}
from where it follows that,
\begin{equation}\label{eq:kernel-norm}
\|K(z,w)\|_{\S_2}^2=\sum_{n,m\in\N}|\nu_n|^2|\nu_m|^2|\langle \pi(w)g_n,\pi(z)g_m\rangle|^2\,.
\end{equation}
Therefore, by Moyal's identity for the short-time Fourier transform
$$
\int_{\R^{2d}}\|K(z,w)\|_{\S_2}^2\,dz =\sum_{n,m\in\N}|\nu_n|^2|\nu_m|^2 \|g_n\|_2^2 \|g_m\|_2^2=1\,,
$$
that is, \ref{norm2} holds. 

Condition~\ref{doubling} is clearly met for the Lebesgue measure on $\R^{2d}$ with doubling constant $C_{\R^{2d}}=2^{2d}$. Regarding \ref{stripe}, note that $\R^{2d}$ is a complete doubling Poincar\'e space (see \cite{BB}).
In particular, if $\Omega$ has a regular boundary, then Proposition~\ref{peri} and Proposition~\ref{reduct} provide the stated estimates for $\pp(\Omega)$, $\ii(\Omega)$ and $\gamma$.
Finally, from the group structure of $\R^{2d}$ it follows that \eqref{eqn} and \eqref{eq:exp} simplify to
\begin{align*}
    N_{\hh_S}(s)&=\int_{\R^{2d}}(1+|z|)^s\sum_{n,m\in\N}|\nu_n|^2|\nu_m|^2|\langle g_n,\pi(z)g_m\rangle|^2\,dz\,,
\intertext{and}
    D_{\hh_S}&= \int_{\R^{2d}} e^{\alpha |z|^{1/\beta}}\sum_{n,m\in\N}|\nu_n|^2|\nu_m|^2|\langle g_n,\pi(z)g_m\rangle|^2\,dz\,.\qedhere
\end{align*}

\end{proof}

\subsection{Fourier concentration operators}\label{sec:fou}
Finally, we discuss how our results on kernels with fast off-diagonal decay can potentially be used effectively on slowly decaying kernels. As a proof of concept, we shall reinterpret \cite{israel15} as a decomposition method, 
which enables the application of Theorem~\ref{th:general_bound} to the space of bandlimited functions.

Let $I,J \subset \mathbb{R}$ be compact intervals and let us normalize the Fourier transform by $\F {f}(\xi)=\int_{\mathbb{R}} f(x) e^{-2\pi i \xi x}\,dx$. Let $\hh$ be the \emph{Paley-Wiener space} of square integrable functions with Fourier transform supported on $I$, with orthogonal projection 
$P_I=\F^{-1}1_I\F$.

The \emph{Fourier concentration operator} is defined by applying a spatial cut-off, followed by a frequency cut-off, $1_I P_J 1_I$, and has the same non-zero eigenvalues (including multiplicities) as the Toeplitz operator
\begin{align}
    \label{eqtef}
    T_{I,J}=P_I 1_J P_I\,.
\end{align} 
The operator $T_{I,J}$ was originally studied in the seminal papers \cite{SlPo,LaPo2,LaPo3} and has proved its usefulness for a variety of applications (see \cite{BoJaKa,Th,ZeMe,DaWa}). 
One-parameter spectral asymptotics for $T_{I,J}$ go back to \cite{La2,LaWi} --- see also
\cite{Wi,So} --- while the study of two-parameter spectral asymptotics is much more recent \cite{KaRoDa,Os,israel15,BoJaKa}. 
The higher dimensional case, with general domains in lieu of intervals  was treated in \cite{IsMa,eigenfourier,HuIsMa,HuIsMa2,kulikov2026sharp}. 

For a compact self-adjoint operator $0\le T \le 1$, define
\begin{align}
    \label{eqpl}
    M_\delta(T)=\#\{\lambda\in\sigma(T):\delta<\lambda<1-\delta\}\,,\qquad \delta\in(0,\tfrac{1}{2})\,.
\end{align}
which measures the number of intermediate eigenvalues. For concentration operators, this is referred to as the size of the \emph{transition} or \emph{plunge region} and essentially encodes the error of the approximation $\{\lambda\in\sigma(T):\lambda>\delta\}\sim \trace(T)$.

We now revisit the following result from \cite{israel15} and reinterpret its proof.
\begin{theorem}[{\cite[Theorem 2]{israel15}}]\label{theois}
    For $\beta>1$, there exists a constant $C_\beta>0$ such that, for every pair of intervals $I,J\subseteq\R$ and $T_{I,J}$ as in \eqref{eqtef}, 
    \[M_\delta(T_{I,J})\le C_\beta\cdot \log^*(|I||J|\delta^{-1})  \cdot\log^*\big(\delta^{-1}\log^*(|I||J|)\big)^\beta\, ,\qquad \delta\in(0,\tfrac{1}{2})\, .\]
\end{theorem}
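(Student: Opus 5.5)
The plan is to reduce to the Hankel-type quantity used in the proof of Theorem~\ref{th:general_bound}, and to decompose the slowly decaying sinc kernel of $P_I$ into finitely many pieces with Gevrey-type decay. By the scaling $x\mapsto x/|J|$, $\xi\mapsto |J|\xi$, and by translation and modulation invariance, we may assume $J=[0,1]$, $I=[-L/2,L/2]$ with $L=|I||J|\ge 1$; the case $L<1$ is handled by the same argument with a bounded number of pieces.

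\textbf{Step 1: reduction to a Hankel operator.} Write $H=1_{J^c}P_I1_J$. Then $T-T^2=H^*H$ for the operator $T=1_JP_I1_J$, which has the same nonzero spectrum as $T_{I,J}$. As in \cite[Lemma 4.1]{MR}, functional calculus with $(t-t^2)^{p/2}$ gives
\[
M_\delta(T_{I,J})\le (\delta/2)^{-p/2}\|H\|_{\S_p}^p, \qquad 0<p\le 1 .
\]

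\textbf{Step 2: Whitney decomposition of $I$.} Decompose $I$ into Whitney intervals $I_k$, whose lengths are comparable to their distance to $\partial I$ and which are truncated at length $\simeq 1$ near the endpoints. This gives $K\simeq\log^*L$ intervals $I_k$ and two boundary intervals $I_\pm$ of length $\simeq 1$.

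Fix $\beta>1$ and choose a partition of unity $1_I=\sum_k\phi_k$ by Gevrey-$\beta$ bumps. Each $\phi_k$ is adapted to a dilate of $I_k$, and the two endpoint pieces are absorbed into cut-offs $\phi_\pm$. The operator $\Phi_k=\F^{-1}\phi_k\F$ is then a convolution with a kernel $\check\phi_k$ satisfying
\[
|\check\phi_k(x)|\lesssim |I_k|\, e^{-c(|I_k||x|)^{1/\beta}} .
\]

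By the $p$-triangle inequality for Schatten quasi-norms,
\[
\|H\|_{\S_p}^p\le \sum_k \|1_{J^c}\Phi_k1_J\|_{\S_p}^p+\|1_{J^c}\Phi_\pm 1_J\|_{\S_p}^p .
\]
The endpoint pieces are not honest Gevrey bumps, since $\phi_\pm$ contains the jump of $1_I$. I would treat them by noting that $1_J\Phi_\pm1_J$ acts essentially on a space of dimension $O(1)$ at unit scale. Failing that, I would split off one more dyadic layer, losing only a factor $\log^*(1/\delta)$, which the claimed bound permits.

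\textbf{Step 3: per-piece estimate.} For each $k$ I would rerun the argument of Proposition~\ref{th:H_norm} directly for the operator $1_{J^c}\Phi_k1_J$, with $\Phi_k$ in place of a projection. The first display in that proof, which bounds $\|\cdot\|_{\S_p}^p$ by integrals of kernel rows, only uses that the kernel is square integrable with uniformly bounded row norms. This yields the analogue of \eqref{eq2r}, and then Hölder's inequality with the weight $\psi$ and Lemma~\ref{lem:p2} (with $\gamma=1$ and $\ii(J)\lesssim 1$) give an estimate of the form
\[
\|1_{J^c}\Phi_k1_J\|_{\S_p}^p\lesssim \alpha^{-1+p/2}\, N_k\big((1+\alpha)\tfrac{2-p}{p}+1\big)^{p/2}.
\]
After rescaling by $|I_k|\ge 1$, the Gevrey decay makes $N_k(s)$ grow at most like $C^s s^{\beta s}$, uniformly in $k$. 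Hence, exactly as in the proof of Corollary~\ref{lemma:exp}, optimizing in $p$ and $\alpha$ bounds the $k$-th term by $O\big((\log^*\tau_k)^\beta\log^*\log^*\tau_k\big)$, where $\tau_k$ is the effective threshold.

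\textbf{Step 4: summation.} Rather than a common $p$, I would distribute the threshold, using for each piece the Ky Fan/Weyl inequality $n_{a+b}(A+B)\le n_a(A)+n_b(B)$ for singular values. This allots the threshold $\delta/K$ to each of the $K+2$ pieces. Since $K\simeq\log^*L$, summing gives
\[
M_\delta(T_{I,J})\lesssim \log^*(L)\cdot\log^*\big(\delta^{-1}\log^*L\big)^{\beta}.
\]
The residual $\log^*\log^*$ factor is absorbed by enlarging $\beta$ slightly, which the statement allows since $\beta>1$ is arbitrary. The extra $\log^*(\delta^{-1})$ inside the first factor accommodates the endpoint pieces.

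\textbf{Main obstacle.} The delicate points are the endpoint pieces, where $1_I$ has a jump and no Gevrey bump is available, and the fact that $\Phi_k$ are not projections. For the latter, Proposition~\ref{th:H_norm} must be re-derived so that it uses only kernel decay and row-norm bounds, without \ref{norm2} or $P^2=P$. I would attack the endpoint pieces first, by comparing $1_J\Phi_\pm1_J$ with a finite-rank operator plus a piece that again has Gevrey decay.
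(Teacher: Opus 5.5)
\textbf{The gap is in Step~3.} You claim that the first display in the proof of Proposition~\ref{th:H_norm} needs only a square-integrable kernel with uniformly bounded row norms. That is not so. The display rests on the factorization $H=HP$, where $P$ is the projection of an RKHS whose kernel vectors $K_y$ are unit vectors (Condition~\ref{norm2}) forming a continuous Parseval frame for $\hh$. This is what gives $\tra\big((H^*H)^{p/2}\big)=\int_X\langle (H^*H)^{p/2}K_y,K_y\rangle\,d\mu(y)$, to which the spectral Jensen inequality is then applied. The next step, \eqref{eq2r}, likewise uses $H^*H=T_\Omega T_{\Omega^c}$, i.e.\ $P^2=P$.

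Without such structure, kernel decay and row bounds give no $\S_p$ control for any $p<2$. For example, on $L^2[0,1]$ take the kernel equal to the entries of an $n\times n$ Hadamard matrix on the cells of side $1/n$. It is bounded by $1$, compactly supported and has row norms $1$, yet it has $n$ singular values equal to $n^{-1/2}$, so its $\S_p^p$-norm is $n^{1-p/2}$.

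Your operator $1_{J^c}\Phi_k1_J$ has multiplication operators on both sides and no projection to exploit. Conjugating by $\F$ does produce one: $\F(1_{J^c}\Phi_k1_J)\F^{-1}=(1-P')\phi_kP'$ with $P'=\F1_J\F^{-1}$. But $P'$ has the sinc kernel. A direct computation gives $\|(1-P')\phi_kK_\eta\|\asymp|I_k|^{1/2}/|\eta-c_k|$ for $|\eta-c_k|\gg|I_k|$, where $K_\eta$ are the kernel vectors of $P'$ and $c_k$ is the center of $I_k$. Hence $\int\|(1-P')\phi_kK_\eta\|^p\,d\eta=\infty$ for $p\le1$.

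So a partition of unity of the symbol $1_I$ leaves the slow decay of the sinc kernel in place. Since the $\Phi_k$ are not projections, Theorem~\ref{th:general_bound} and Corollary~\ref{lemma:exp} cannot be applied to the pieces either. The per-piece bound on which Step~4 relies is therefore not established.

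\textbf{How the paper avoids this.} This is the purpose of the Coifman--Meyer construction. The folding operators $Q_n$ are orthogonal projections onto mutually orthogonal subspaces $W_n$ with $L^2(I)=\bigoplus_nW_n$. Each $\hh_n=\F^{-1}W_n$ is therefore a genuine RKHS with Gevrey-decaying kernel, and Corollary~\ref{lemma:exp} applies verbatim to the rescaled $\Pi_n1_{\delta_nJ}\Pi_n$. The pieces are then recombined by Courant--Fischer for $S-S^2$, using $P_I=\widetilde P+\sum_nP_n$. The near-endpoint part is lumped into $\widetilde\hh$, where $\|\widetilde T\|\le6\eta$.

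\textbf{A possible repair of your route.} If you keep a non-orthogonal partition, you need a genuinely different per-piece estimate, for instance the following.
Split $1_J=1_{J_{\mathrm{near}}}+1_{J_{\mathrm{far}}}$, where $J_{\mathrm{near}}$ is the set of points of $J$ within $R/|I_k|$ of $J^c$, and take $R\simeq\log^*(1/\epsilon)^\beta$.
Schur's test and the decay of $\check\phi_k$ give $\|1_{J^c}\Phi_k1_{J_{\mathrm{far}}}\|\le\epsilon/2$.
Meanwhile $1_{J^c}\Phi_k1_{J_{\mathrm{near}}}=(1_{J^c}\Phi_k)(P_{\tilde I_k}1_{J_{\mathrm{near}}})$, with $\tilde I_k\supseteq\operatorname{supp}\phi_k$ and $|\tilde I_k|\lesssim|I_k|$. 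This has time--bandwidth product $O(R)$, hence only $O(R+\log^*(1/\epsilon))$ singular values above $\epsilon/2$.

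The same factorization disposes of your endpoint pieces, whose symbols live on intervals of length $O(1)$. The jump of $1_I$ is not the real obstacle.
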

\begin{proof}[Sketch of the proof, based on \cite{israel15}]
\noindent {\bf Step 1} \emph{(Wave-packet decomposition)}. Since rescaling time by a factor $r>0$ rescales the frequency variable by $r^{-1}$, we can assume without loss of generality that $|J|=1$. 

As in \cite{israel15}, we shall use a Coifman-Meyer decomposition of $L^2(I)$ from \cite{CoMe}, which translates to a decomposition of $\hh$ via the Fourier transform. Specifically, let $\{x_n\}_{n\in\Z}\subseteq I$  with $x_n<x_{n+1}$ such that $I=\bigcup_n I_n$ where $I_n=[x_n,x_{n+1})$ form a Whitney decomposition of $I$: 
\[|I_n|\le  \mathrm{d}(I_n,I^c)\le 5|I_n|\,.\]
From \cite[Section~3]{israel15} or 
\cite[Chapter 1.3 and Equation (3.10)]{MR1408902}, given $\beta>1$, there exist functions $\theta_n\in\mathcal{C}^\infty(\R)$ with the following properties.
	\begin{enumerate}[label=(\roman*)]
		\item\label{theti} $0\le\theta_n\le 1$, $\theta_n(x)=1$ for $x\in \big[x_n+\tfrac{\delta_n}{10},x_{n+1}-\tfrac{\delta_n}{10}\big]$, $\theta_n(x)=0$ for $x\notin \big[x_n-\tfrac{\delta_n}{10},x_{n+1}+\tfrac{\delta_n}{10}\big]$, where $\delta_n=x_{n+1}-x_n$.
		\item\label{thetii} There are constants $c_\beta,C_\beta>0$ independent of $n$ such that $|\widehat{\theta_n}(x)|\le C_\beta e^{-c_\beta |\delta_n x|^{1/\beta}}$.
		\item\label{thetiii} The maps $Q_n:L^2(\R)\to L^2(\R)$, given by
        \begin{align*}
            Q_nf(x)= \theta_n(x)\big(\theta_n(x) f(x)+\theta_n(2x_n-x) f(2x_n-x)+\theta_n(2x_{n+1}-x) f(2x_{n+1}-x)\big)\,,
        \end{align*}
        are orthogonal projections onto orthogonal subspaces $W_n\subseteq L^2\big[x_n-\tfrac{\delta_n}{10},x_{n+1}+\tfrac{\delta_n}{10}\big)$. Moreover, one has $L^2(I)=\bigoplus_n W_n$.
	\end{enumerate}

\smallskip
    
\noindent {\bf Step 2} \emph{(Reduction to orthogonal components)}.
Given $\eta\in(0,\tfrac{1}{2})$, we write
\[\hh =\bigoplus_n \hh_n=\Big(\bigoplus_{n\in A} \hh_n\Big)\oplus \widetilde\hh\,,\]
where $\hh_n=\F^{-1}W_n$ and $A$ is the set of indices for which $ \mathrm{d}(I_n,I^c)>\eta$.  
Denote the orthogonal projections onto $\hh_n$ and $\widetilde \hh$ by $P_n$ and $\widetilde P$ respectively and write
\[ T_n=P_n 1_J P_n\,,\qquad S_n=1_J P_n 1_J \,,\qquad  \widetilde T=\widetilde P 1_J \widetilde P\,, \qquad \text{and} \qquad \widetilde S= 1_J  \widetilde P  1_J \,.\]
Notice that $T_{n}$ and $S_n$ as well as $\widetilde T$ and $\widetilde S$ share the same non-zero eigenvalues. For $\delta\in (0,\tfrac{1}{2})$, a straightforward computation applying the Courant-Fischer formula to the operator $S-S^2$ where $S=\widetilde S+\sum_n S_n$ gives
    \begin{align}
        \label{eqdec}
    M_\delta(T_{I,J})\le M_{\delta_A}(\widetilde T)+\sum_{n\in A} M_{\delta_A}(T_{n}) \,,
    \end{align}
where $\delta_A=\big(\tfrac{\delta}{2(\#A+1)}\big)^2$.
\smallskip

\noindent {\bf Step 3} \emph{(Reproducing kernel estimates)}.
It is easy to check that $\|\widetilde T\|\le 6\eta$ and so
\begin{align}
\label{eqmrest}
    M_{6\eta}(\widetilde T)=0
    \,.
\end{align}
Regarding $T_n$, let $U_n:L^2(\R)\to L^2(\R)$ be the rescaling given by
$U_nf(x)=\sqrt{\delta_n}f(\delta_n x)\,.$
Then, $T_n$ shares the same eigenvalues with
\[U_n^{-1}T_n U_n= U_n^{-1}P_n U_n 1_{\delta_n J} U_n^{-1}P_n U_n=:\Pi_n 1_{\delta_n J} \Pi_n\,.\]
Let $K_n(x,y)$ be the reproducing kernel of $U_n^{-1}\hh_n$. One can check that 
\[1/2\le \|(K_n)_x\|_2^2\le 9\,,\qquad \text{and}\qquad |K_n(x,y)|\le C_\beta' e^{-c_\beta' |\delta_n (x-y)|^{1/\beta}} \,,\]
where $c_\beta',C_\beta'$ do not depend on $\delta_n$. 
Notice that one can renormalize the measure to satisfy \ref{norm2}, and \ref{stripe} clearly holds with $\gamma=1$ and $\ii(\Omega)=4$. 
Applying Corollary~\ref{lemma:exp} to $\Pi_n 1_{\delta_n J} \Pi_n$ (and slightly increasing $\beta$ to avoid $\log\log$ terms), we get for a constant $D_\beta>0$ independent of $n$,
\begin{align}
\label{eqmn}
    M_\eta(T_n)\le D_\beta \log^\ast(\eta^{-1})^\beta \,.
\end{align}
Take 
\[\eta= \Big(\frac{c\delta}{\log^*(|I|\delta^{-1})}\Big)^2\,,\]
for some sufficiently small constant $c>0$.
From  \eqref{eqdec}, \eqref{eqmrest} and \eqref{eqmn}, we conclude that
\[M_\delta\le M_{6\eta}(\widetilde T)+\sum_{n\in A} M_{6\eta}(T_n)
\lesssim \#A  \log(\eta^{-1})^\beta \lesssim \log^*(|I|\delta^{-1})  \log^*\big(\delta^{-1}\log^*(|I|) \big)^\beta\,.\qedhere\]
\end{proof}

\section*{Acknowledgements}
This research was funded by the Austrian Science Fund (FWF)  10.55776/Y1199 (J.L.R., M.S. and L.V.)  and 10.55776/PAT1384824 (F.M. and M.S.).  F.M. was also supported by the EPSRC UKRI/EP/C003286/1.
L. V. acknowledges support from the Erasmus+ Scholarship for Traineeship, issued by the Università di Genova; number 2023/93.  This work elaborates in part on the master's thesis \cite{valtesis}. For open
access purposes, the authors have applied a CC BY public copyright license to any author-accepted manuscript
version arising from this submission.

\bibliographystyle{abbrv}
\bibliography{biblio}

\begin{thebibliography}{10}

\bibitem{AmMiPa}
L.~Ambrosio, M.~Miranda~Jr., and D.~Pallara.
\newblock Special functions of bounded variation in doubling metric measure spaces.
\newblock In {\em Calculus of {V}ariations: {T}opics from the {M}athematical {H}eritage of {E}. {D}e {G}iorgi}, volume~14 of {\em Quad. Mat.}, pages 1--45. Dept. Math., Seconda Univ. Napoli, Caserta, 2004.

\bibitem{AMR25}
Y.~Ameur, F.~Marceca, and J.~L. Romero.
\newblock Gaussian beta ensembles: the perfect freezing transition and its characterization in terms of {B}eurling-{L}andau densities.
\newblock {\em Ann. Inst. H. Poincar{\'e} Probab. Statist.}, 62(1):296--327, 2026.

\bibitem{AR23}
Y.~Ameur and J.~L. Romero.
\newblock The planar low temperature {C}oulomb gas: separation and equidistribution.
\newblock {\em Rev. Mat. Iberoam.}, 39(2):611–648, 2023.

\bibitem{benyimodulation}
{\'A}.~B{\'e}nyi and K.~A. Okoudjou.
\newblock {\em Modulation Spaces: With Applications to Pseudodifferential Operators and Nonlinear Schr{\"o}dinger Equations}.
\newblock Applied and Numerical Harmonic Analysis. Birkh{\"a}user Basel, 2020.

\bibitem{BB}
A.~Bj\"{o}rn and J.~Bj\"{o}rn.
\newblock {\em Nonlinear {P}otential {T}heory on {M}etric {S}paces}, volume~17 of {\em EMS Tracts in Mathematics}.
\newblock European Mathematical Society (EMS), Z\"{u}rich, 2011.

\bibitem{BoGo}
S.~G. Bobkov and F.~G\"{o}tze.
\newblock Discrete isoperimetric and {P}oincar\'{e}-type inequalities.
\newblock {\em Probab. Theory Related Fields}, 114(2):245--277, 1999.

\bibitem{BoJaKa}
A.~Bonami, P.~Jaming, and A.~Karoui.
\newblock Non-asymptotic behavior of the spectrum of the sinc-kernel operator and related applications.
\newblock {\em J. Math. Phys.}, 62(3):Paper No. 033511, 2021.

\bibitem{Ca}
D.~G. Caraballo.
\newblock Areas of level sets of distance functions induced by asymmetric norms.
\newblock {\em Pacific J. Math.}, 218(1):37--52, 2005.

\bibitem{MR1732755}
J.~Cho.
\newblock A characterization of {G}elfand-{S}hilov space based on {W}igner distribution.
\newblock {\em Commun. Korean Math. Soc.}, 14(4):761--767, 1999.

\bibitem{ChChKi}
J.~Chung, S.-Y. Chung, and D.~Kim.
\newblock Characterizations of the {G}elfand-{S}hilov spaces via {F}ourier transforms.
\newblock {\em Proc. Amer. Math. Soc.}, 124(7):2101--2108, 1996.

\bibitem{MR1882695}
L.~A. Coburn.
\newblock The {B}argmann isometry and {G}abor-{D}aubechies wavelet localization operators.
\newblock In {\em Systems, approximation, singular integral operators, and related topics ({B}ordeaux, 2000)}, volume 129 of {\em Oper. Theory Adv. Appl.}, pages 169--178. Birkh{\"a}user, Basel, 2001.

\bibitem{CoMe}
R.~R. Coifman and Y.~Meyer.
\newblock Remarques sur l'analyse de {F}ourier \`a fen\^{e}tre.
\newblock {\em C. R. Acad. Sci. Paris S\'{e}r. I Math.}, 312(3):259--261, 1991.

\bibitem{da88}
I.~Daubechies.
\newblock Time-frequency localization operators: a geometric phase space approach.
\newblock {\em IEEE Trans. Inform. Theory}, 34(4):605--612, 1988.

\bibitem{DaWa}
M.~A. Davenport and M.~B. Wakin.
\newblock Compressive sensing of analog signals using discrete prolate spheroidal sequences.
\newblock {\em Appl. Comput. Harmon. Anal.}, 33(3):438--472, 2012.

\bibitem{DaSe}
G.~David and S.~Semmes.
\newblock {\em Analysis of and on {U}niformly {R}ectifiable {S}ets}, volume~38 of {\em Mathematical Surveys and Monographs}.
\newblock American Mathematical Society, Providence, RI, 1993.

\bibitem{dMFeNo}
F.~{De Mari}, H.~G. Feichtinger, and K.~Nowak.
\newblock Uniform eigenvalue estimates for time-frequency localization operators.
\newblock {\em J. London Math. Soc. (2)}, 65(3):720--732, 2002.

\bibitem{operator-stft}
M.~D\"orfler, F.~Luef, H.~McNulty, and E.~Skrettingland.
\newblock Time-frequency analysis and coorbit spaces of operators.
\newblock {\em J. Math. Anal. Appl.}, 534:128058, 2024.

\bibitem{MR2452833}
M.~Engli{\v s}.
\newblock Toeplitz operators and localization operators.
\newblock {\em Trans. Amer. Math. Soc.}, 361(2):1039--1052, 2009.

\bibitem{MR3469849}
I.~M. Gelfand and G.~E. Shilov.
\newblock {\em Generalized {F}unctions. {V}ol. 2}.
\newblock AMS Chelsea Publishing, Providence, RI, 2016.
\newblock Spaces of fundamental and generalized functions, Translated from the 1958 Russian original [MR0106409] by M. D. Friedman, A. Feinstein and C. P. Peltzer, Reprint of the 1968 English translation [MR0230128].

\bibitem{grbook}
K.~Gr{\"o}chenig.
\newblock {\em Foundations of {T}ime-{F}requency {A}nalysis}.
\newblock Applied and Numerical Harmonic Analysis. Birkh{\"a}user Boston, Inc., Boston, MA, 2001.

\bibitem{wiener}
K.~Gr\"ochenig and M.~Leinert.
\newblock Wiener's lemma for twisted convolution and {G}abor frames.
\newblock {\em J. Amer. Math. Soc.}, 17(1), 2003.

\bibitem{MR2027858}
K.~Gr{\"o}chenig and G.~Zimmermann.
\newblock Spaces of test functions via the {STFT}.
\newblock {\em J. Funct. Spaces Appl.}, 2(1):25--53, 2004.

\bibitem{Ha}
S.~Halvdansson.
\newblock Empirical plunge profiles of time-frequency localization operators.
\newblock {\em Appl. Comput. Harmon. Anal.}, 81:101825, 2026.

\bibitem{HK}
J.~Heinonen and P.~Koskela.
\newblock Quasiconformal maps in metric spaces with controlled geometry.
\newblock {\em Acta Math.}, 181(1):1--61, 1998.

\bibitem{MR1408902}
E.~Hern\'{a}ndez and G.~Weiss.
\newblock {\em A {F}irst {C}ourse on {W}avelets}.
\newblock Studies in Advanced Mathematics. CRC Press, Boca Raton, FL, 1996.

\bibitem{HuIsMa}
K.~Hughes, A.~Israel, and A.~Mayeli.
\newblock On the eigenvalue distribution of spatio-spectral limiting operators in higher dimensions, {II}.
\newblock {\em J. Fourier Anal. Appl.}, 31(51), 2025.

\bibitem{HuIsMa2}
K.~Hughes, A.~Israel, and A.~Mayeli.
\newblock Wave packets and eigenvalue estimates for limiting operators on the disk.
\newblock {\em arXiv:2601.21224}, 2026.

\bibitem{israel15}
A.~Israel.
\newblock The eigenvalue distribution of time-frequency localization operators.
\newblock {\em arXiv:1502.04404}, 2015.

\bibitem{IsMa}
A.~Israel and A.~Mayeli.
\newblock On the eigenvalue distribution of spatio-spectral limiting operators in higher dimensions.
\newblock {\em Appl. Comput. Harmon. Anal.}, 70:Paper No. 101620, 2024.

\bibitem{KaRoDa}
S.~Karnik, J.~Romberg, and M.~A. Davenport.
\newblock Improved bounds for the eigenvalues of prolate spheroidal wave functions and discrete prolate spheroidal sequences.
\newblock {\em Appl. Comput. Harmon. Anal.}, 55:97--128, 2021.

\bibitem{MR4711850}
A.~Kulikov.
\newblock Exponential lower bound for the eigenvalues of the time-frequency localization operator before the plunge region.
\newblock {\em Appl. Comput. Harmon. Anal.}, 71:Paper No. 101639, 8, 2024.

\bibitem{kul2}
A.~Kulikov.
\newblock Sharp estimates for eigenvalues of localization operators before the plunge region.
\newblock {\em arXiv:2603.07407}, 2026.

\bibitem{kulikov2026sharp}
A.~Kulikov and M.~D. Larsen.
\newblock Sharp estimates for eigenvalues of localization operators with applications to area laws.
\newblock {\em arXiv preprint: 2603.23832}, 2026.

\bibitem{La}
P.~Lahti.
\newblock A {F}ederer-style characterization of sets of finite perimeter on metric spaces.
\newblock {\em Calc. Var. Partial Differential Equations}, 56(5):Paper No. 150, 2017.

\bibitem{La2}
H.~J. Landau.
\newblock On {S}zeg{\H o}'s eigenvalue distribution theorem and non-{H}ermitian kernels.
\newblock {\em J. Analyse Math.}, 28:335--357, 1975.

\bibitem{LaPo2}
H.~J. Landau and H.~O. Pollak.
\newblock Prolate spheroidal wave functions, {F}ourier analysis and uncertainty. {II}.
\newblock {\em Bell System Tech. J.}, 40:65--84, 1961.

\bibitem{LaPo3}
H.~J. Landau and H.~O. Pollak.
\newblock Prolate spheroidal wave functions, {F}ourier analysis and uncertainty. {III}. {T}he dimension of the space of essentially time- and band-limited signals.
\newblock {\em Bell System Tech. J.}, 41:1295--1336, 1962.

\bibitem{LaWi}
H.~J. Landau and H.~Widom.
\newblock Eigenvalue distribution of time and frequency limiting.
\newblock {\em J. Math. Anal. Appl.}, 77(2):469--481, 1980.

\bibitem{accumulated-cohen}
F.~Luef and E.~Skrettingland.
\newblock On accumulated {C}ohen’s class distributions and mixed-state localization operators.
\newblock {\em Constr. Approx.}, 52:31–64, 2020.

\bibitem{MR}
F.~Marceca and J.~L. Romero.
\newblock Spectral deviation of concentration operators for the short-time {F}ourier transform.
\newblock {\em Studia Math.}, 270(2):145--173, 2023.

\bibitem{MR2}
F.~Marceca and J.~L. Romero.
\newblock Improved discrepancy for the planar {C}oulomb gas at low temperatures.
\newblock {\em J. Anal. Math.}, 157(1):113--153, 2025.

\bibitem{eigenfourier}
F.~Marceca, J.~L. Romero, and M.~Speckbacher.
\newblock Eigenvalue estimates for {F}ourier concentration operators on two domains.
\newblock {\em Arch. Ration. Mech. Anal.}, 248(3):Paper No. 35, 2024.

\bibitem{MaRoTo}
J.~M. Maz\'{o}n, J.~D. Rossi, and J.~J. Toledo.
\newblock {\em Nonlocal {P}erimeter, {C}urvature and {M}inimal {S}urfaces for {M}easurable {S}ets}.
\newblock Frontiers in Mathematics. Birkh\"{a}user/Springer, Cham, 2019.

\bibitem{Mi}
M.~Miranda~Jr.
\newblock Functions of bounded variation on ``good'' metric spaces.
\newblock {\em J. Math. Pures Appl. (9)}, 82(8):975--1004, 2003.

\bibitem{MR3433287}
J.~P. Oldfield.
\newblock Two-term {S}zeg{\H o} theorem for generalised anti-{W}ick operators.
\newblock {\em J. Spectr. Theory}, 5(4):751--781, 2015.

\bibitem{Os}
A.~Osipov.
\newblock Certain upper bounds on the eigenvalues associated with prolate spheroidal wave functions.
\newblock {\em Appl. Comput. Harmon. Anal.}, 35(2):309--340, 2013.

\bibitem{rkhs}
V.~Paulsen and M.~Raghupathi.
\newblock {\em An {I}ntroduction to the {T}heory of {R}eproducing {K}ernel {H}ilbert {S}paces}.
\newblock Cambridge University Press, 2016.

\bibitem{eirik}
E.~Skrettingland.
\newblock Equivalent norms for modulation spaces from positive {C}ohen’s class distributions.
\newblock {\em J. Fourier Anal. Appl.}, 28(2), 2022.

\bibitem{SlPo}
D.~Slepian and H.~O. Pollak.
\newblock Prolate spheroidal wave functions, {F}ourier analysis and uncertainty. {I}.
\newblock {\em Bell System Tech. J.}, 40:43--63, 1961.

\bibitem{So}
A.~V. Sobolev.
\newblock Pseudo-differential operators with discontinuous symbols: {W}idom's conjecture.
\newblock {\em Mem. Amer. Math. Soc.}, 222(1043):vi+104, 2013.

\bibitem{SoTr}
J.~Soria and P.~Tradacete.
\newblock The least doubling constant of a metric measure space.
\newblock {\em Ann. Acad. Sci. Fenn. Math.}, 44(2):1015--1030, 2019.

\bibitem{Th}
D.~Thomson.
\newblock Spectrum estimation and harmonic analysis.
\newblock {\em Proceedings of the IEEE}, 70(9):1055--1096, 1982.

\bibitem{valtesis}
L.~Valentini.
\newblock Spectral deviation of {T}oeplitz operators acting on reproducing kernel {H}ilbert spaces.
\newblock Master's thesis, Universit{\`a} degli studi di Genova, 2024.

\bibitem{Wi}
H.~Widom.
\newblock On a class of integral operators on a half-space with discontinuous symbol.
\newblock {\em J. Funct. Anal.}, 88(1):166--193, 1990.

\bibitem{ZeMe}
T.~Zemen and C.~F. Mecklenbr\"{a}uker.
\newblock Time-variant channel estimation using discrete prolate spheroidal sequences.
\newblock {\em IEEE Trans. Signal Process.}, 53(9):3597--3607, 2005.

\end{thebibliography}

\end{document}